\documentclass[a4paper,UKenglish,cleveref,numberwithinsect,nolineno]{socg-lipics-v2021}

\pdfoutput=1 
\hideLIPIcs  

\title{Estimation of Conformal Metrics}

\author{Jérôme Taupin}{Université Paris-Saclay, France \and INRIA Saclay, France}{}{}{}

\authorrunning{J. Taupin}

\ccsdesc[500]{Theory of computation~Computational geometry}

\keywords{Geometric inference, metric estimation, conformal metric, geodesics, sets of positive reach}

\usepackage{tikz,tikz-3dplot}
\usetikzlibrary{patterns}

\usepackage{acronym}

\usepackage{macros} 

\newcommand{\fmin}{f_{\min}}
\newcommand{\metricratioOld}[2]{\Lv 1-\frac{#1}{#2} \Rv}
\newcommand{\metricratio}[2]{\Lv \frac{#1-#2}{#1\vee#2} \Rv}
\newcommand{\damb}{{N}}

\EventEditors{}
\EventNoEds{2}
\EventLongTitle{}
\EventShortTitle{}
\EventAcronym{}
\EventYear{}
\EventDate{}
\EventLocation{}
\EventLogo{}
\SeriesVolume{42}
\ArticleNo{23}

\begin{document}

\maketitle

\begin{abstract}
We study deformations of the geodesic distances on a domain of $\RR^\damb$ induced by a function called conformal factor.
We show that under a positive reach assumption on the domain (not necessarily a submanifold) and mild assumptions on the conformal factor, geodesics for the conformal metric have good regularity properties in the form of a lower bounded reach. This regularity allows for efficient estimation of the conformal metric from a random point cloud with a relative error proportional to the Hausdorff distance between the point cloud and the original domain.
We then establish convergence rates of order $n^{-1/d}$ that are close to sharp when the intrinsic dimension $d$ of the domain is large, for an estimator that can be computed in $\cO(n^2)$ time.
Finally, this paper includes a useful equivalence result between ball graphs and nearest-neighbors graphs when assuming Ahlfors regularity of the sampling measure, allowing to transpose results from one setting to another.
\end{abstract}

\section{Introduction}
This paper studies metrics over subsets of the Euclidean space $(\RR^\damb,\|\cdot\|)$ obtained by conformal deformation of the shortest-path metric via a positive function. We are particularly interested in the regularity of such metrics and in their estimation via \iid sampling of points.

\begin{definition}
\label{def:conformal_metric}
Let $M \subset \RR^\damb$ be a closed path-connected \emph{domain} and $f:M\to\RR_+^*$ be a \emph{conformal factor}.
For all $x,y\in M$ the \emph{conformal distance} between $x$ and $y$ over $M$ via $f$ is defined as
\begin{align}
    \label{eq:def:conformal_metric}
    D_{M,f}(x,y)
    \eqdef \inf_{\gamma\in\Gamma_M(x,y)} \int_I f(\gamma(t)) \|\dot\gamma(t)\| \d t
\end{align}
where $\Gamma_M(x,y)$ is the set of all Lipschitz paths $\gamma:I\to M$ where $I = [a,b]$ is a nontrivial segment, $\gamma(a)=x$ and $\gamma(b)=y$.
The quantity minimized over all paths $\gamma$ is denoted
\begin{align*}
    |\gamma|_f
    \eqdef \int_I f(\gamma(t)) \|\dot\gamma(t)\| \d t
\end{align*}
and referred to as the conformal length of the curve $\gamma$ via $f$.
In the case where $f=1$, we write $D_M(x,y) = D_{M,1}(x,y)$ and $|\gamma| = \int_I \|\dot\gamma\|$, respectively the distance between $x$ and $y$ induced by the ambient metric over $M$ and the Euclidean length of a curve $\gamma$.
\end{definition}

In the following, to ensure regularity of the conformal metric, the domain $M$ is assumed to have positive reach $\tau_M$, which we recall is defined as the supremum of all $r>0$ such that any point in the offset $M^r = \{x\in\RR^\damb : d(x,M) < r\}$ has a unique projection onto~$M$---where $d(x,M) = \inf_{y\in M}\|x-y\|$ denotes the distance from point $x$ to subset~$M$---see \cite{federerCurvatureMeasures1959}.
Moreover, the conformal factor $f$ is assumed to be $\kappa$-Lipschitz and lower bounded by $\fmin>0$.
These are the sole assumptions used in this paper regarding $M$ and $f$.

Notice that $f$ is defined over $M$ in \cref{def:conformal_metric} as paths are constrained to the domain. Since any Lipschitz function defined over a subset of $\RR^\damb$ can be extended to the whole space preserving its Lipschitz property and lower bound (see \cite[Theorem 1]{mcshaneExtensionRangeFunctions1934} for instance), one may assume without loss of generality that $f$ is defined over the entire space~$\RR^\damb$, which is useful to estimate the conformal metric efficiently.
The connectedness and positive reach of $M$ ensure that for any endpoints $x,y\in M$, there exists a Lipschitz path from $x$ to $y$ in $M$, so that $\Gamma_M(x,y)$ is always nonempty and $D_{M,f}(x,y)$ is always finite. This can be deduced from \cref{lemma:metric_distortion_reach_characterization} below.
Most of the time, a path $\gamma$ is chosen to be parameterized with constant velocity, \ie, $\|\dot\gamma\|$ is constant over $I$, with $I$ being either $[0,1]$ or $[0,|\gamma|]$. The latter is referred to as an \emph{arc-length} parameterization, where $\|\dot\gamma\| = 1$ almost everywhere.

The \emph{induced metric} $D_M = D_{M,1}$ is the metric induced by the ambient space $\RR^\damb$ onto $M$.
If $M$ is a $\cC^1$ submanifold of $\RR^\damb$ and $f$ is $\cC^1$, $D_{M,f}$ is a Riemannian metric with tensor $f(x)^2 \cdot g_x$ at point $x\in M$ where $g_x$ is the tensor of the induced metric $D_M$ at $x$. The term ``conformal'' used in this paper is borrowed from the Riemannian literature.
Finally, if $\mu$ is a measure over a submanifold $M$ with density $\rho$ \ac{wrt} the volume form of $M$ and $f$ is a negative power of the density~$\rho$, $D_{M,f}$ has already been studied and is sometimes referred to as the \emph{Fermat distance} \cite{groismanNonhomogeneousEuclideanFirstpassage2022}.
This particular case is one of the main motivations for this article as the Fermat distance has been used for various practical applications, see for instance \cite{garciatrillosFermatDistancesMetric2024} and the references therein.
Particular examples of the conformal factor $f$ are discussed in \cref{sec:particular_f}.

\bigskip
Conformal metrics are included in the more general class of length-spaces, for which it is known that the infimum in \cref{def:conformal_metric} is in fact a minimum \cite[Theorem 2.5.23]{buragoCourseMetricGeometry2001}. That is, for all $x,y\in M$ there exists a path $\gamma\in\Gamma_M(x,y)$ such that $D_{M,f}(x,y) = |\gamma|_f$, called a \emph{minimizing geodesic}---and shortened to geodesic in this paper for brevity. Denote $\Gamma^\star_{M,f}(x,y)$ the set of such geodesics between two endpoints $x$ and $y$ along with $\Gamma^\star_{M,f} = \bigcup_{x\ne y\in M}\Gamma^\star_{M,f}(x,y)$ the set of all geodesics \ac{wrt} the conformal metric.
We discuss in \cref{sec:regularity} the regularity of geodesics \ac{wrt} $D_{M,f}$ and show in \cref{prop:geodesic_reach} that under the aforementioned assumptions on $M$ and $f$, geodesics have positive reach that is lower bounded by an explicit constant depending on the reach of the domain and on the conformal factor. In particular, any geodesic may be parameterized as a $\cC^{1,1}$ curve with an explicit upper bound on the Lipschitz constant of its first derivative.

We then show in \cref{sec:polygonal_approximation} that the conformal metric can be approached using polygonal paths on a weighted graph built from a point cloud $X\subset M$, provided that the graph only contains edges of length at most some threshold $r$ and that $X$ is close to $M$ in Hausdorff distance. The small edges condition ensures that paths on the graph cannot venture too far outside the domain.
This kind of construction is the same as the one used for the Isomap algorithm \cite{bernsteinGraphApproximationsGeodesics2000}, although we allow more generality by adapting the weights to the conformal factor $f$. When $f=1$ and $M$ is a geodesically convex $\cC^2$ submanifold, \cite{bernsteinGraphApproximationsGeodesics2000} provides a relative error bound of order $\frac{r^2}{\tau^2} + \frac{\rho}{r}$ where $\tau$ is the minimum radius of curvature of $M$ and $\rho$ denotes the Hausdorff distance between $M$ and $X$.
The term depending on $\rho$ can be made quadratic as shown in \cite{aaronConvergenceRatesEstimators2018}, where an assumption called geodesic smoothness that is slightly weaker than that of a positive reach allows to obtain a bound of order $Ar + \frac{\rho^2}{r^2}$ where $A$ depends on the assumption.
This assumption however lacks precision when comparing distances at a local scale which results in the first term being of order $r$ instead of $r^2$.
By using a positive reach assumption instead, the upper bound can be further improved to $\frac{r^2}{\tau^2} + \frac{\rho^2}{r^2}$ according to \cite{arias-castroPerturbationBoundsProcrustes2020} where $\tau$ is the reach of the domain, although the setup also implicitly assumes that the domain $M$ is a smooth manifold isometric to a convex domain.
Going back to our setup, we adapt these prior works to fit any conformal change. Under some condition on the weight function used, that in particular always holds for the induced metric, we establish in \cref{thm:geodesic_approx} the same upper bound on the approximation error as in \cite{arias-castroPerturbationBoundsProcrustes2020}, that is $\frac{r^2}{\tau^2} + \frac{\rho^2}{r^2}$ where $\tau$ is the explicit lower bound on the reach of any geodesic provided by \cref{prop:geodesic_reach}. This result emphasizes the fact that the only assumption on the domain needed for this level of precision is that of a positive reach. 
The approximation error can thus be made proportional to the Hausdorff distance between the point cloud and the domain by choosing appropriately $r \asymp \sqrt{\tau~\rho}$.
When the conformal factor $f$ is unknown, replacing it with an estimate $g$ does not alter the results provided that $g$ is close enough to $f$, as shown in \cref{lemma:estimator_convergence_both_errors}.

Assuming that $X = X_n$ is the outcome of $n\ge1$ \iid samples from a $d$-standard measure~$\mu$ over $M$, we study in \cref{sec:random_samples} the estimator of $D_{M,f}$ built on $X_n$ following the construction of \cref{sec:polygonal_approximation}. This estimator is shown in \cref{thm:convergence_ball_graph} to converge to $D_{M,f}$ at a rate of $n^{-1/d}$ provided that $f$ is known or can be estimated with same rate. This rate follows from \cref{thm:geodesic_approx} and the fact that $X_n$ is known to converge to $M$ in Hausdorff distance at a rate of $n^{-1/d}$.
In particular, this allows for efficient estimation of the induced metric with the sole assumption that $M$ has positive reach.
This convergence is shown using ball graphs. However, practical estimation is made easier by using nearest-neighbors graphs instead, in part because it does not require to know the intrinsic dimension $d$ to obtain an optimal convergence speed.
For this reason, we show in \cref{thm:convergence_NN_graph} that it is possible to retrieve the same convergence speed when replacing the ball graph in the estimator with a $k$-nearest-neighbors graph with $k \asymp \sqrt n$. To do so, we establish an equivalence with high probability between nearest-neighbors graphs and ball graphs when their respective parameters are properly scaled and the underlying measure is $d$-Ahlfors, see \cref{prop:graph_ball_to_NN}.
Finally, the nearest-neighbors estimator of the conformal distance between two fixed points may be computed in $\cO(n^2)$ if the time complexity of evaluating~$f$ is considered constant.
Under the stronger assumption that $M$ is a $\cC^k$ submanifold of dimension~$d$ with $k\ge2$, the minimax optimal convergence rate for the induced metric is known to be of order $n^{-k/d}$ \cite{aamariOptimalReachEstimation2023}.
When only assuming positive reach and no differential structure on $M$, the discussion of the optimal minimax rate proves to be harder as we are not able to match the upper bound of $n^{-1/d}$ and obtain a lower bound of order $n^{-1/(d-1/2)}$ instead, see \cref{thm:minimax_lower_bound}.
We discuss with more details the comparison between our setup and the $\cC^k$ case in \cref{sec:k_d}.

\section{Regularity of the Geodesics for the Conformal Metric}
\label{sec:regularity}

Recall that the term ``geodesic'' refers in this paper to a curve that achieves a global minimum of the conformal length.
In this section we study the regularity of the geodesics \ac{wrt} $D_{M,f}$.
We show that geodesics are $\cC^{1,1}$ curve with reach bounded from below by a constant depending only on $\tau_M$, $\kappa$ and~$\fmin$.
This regularity property is crucial to obtain a good approximation of the geodesics by polygonal paths.
The reach of a set can be characterized through the local distortion of the induced metric compared to the Euclidean metric.
\begin{lemma}{\cite[Theorem 1]{boissonnatReachMetricDistortion2019}}
\label{lemma:metric_distortion_reach_characterization}
The reach of $M$ may be expressed as
\begin{equation*}
    \tau_M
    = \sup\La r>0 ~:~ \forall x,y\in M,~ \|x-y\|<2r \Rightarrow D_M(x,y) \le 2r \arcsin\Lp\frac{\|x-y\|}{2r}\Rp\Ra~.
\end{equation*}
\end{lemma}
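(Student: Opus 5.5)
The plan is to prove the two inclusions between $\tau_M$ and the supremum $\tau'$ appearing in the statement separately. Throughout, $\tau'$ denotes the set of admissible $r$ in the statement, and we write $\pi_M$ for the projection onto $M$ defined on $M^{\tau_M}$.

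\emph{Step 1: $\tau' \ge \tau_M$.} Fix $r<\tau_M$ and $x,y\in M$ with $\|x-y\|<2r$. The natural candidate path is the projection of the segment $[x,y]$. Since $\|x-y\|<2r$, every point of the segment lies within distance at most $r$ of $\{x,y\}$, hence in the offset $M^{\tau_M}$. Federer's results then apply: the projection $\pi_M$ is Lipschitz on $M^{r}$, with constant $\tau_M/(\tau_M-s)$ at distance $s$. So $\gamma=\pi_M\circ[x,y]$ is a Lipschitz path in $M$, and this also settles nonemptiness of $\Gamma_M(x,y)$.

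A direct length estimate from the Lipschitz constant only gives a bound of the form $\|x-y\|(1+O(\|x-y\|/\tau_M))$, which is weaker than $2r\arcsin(\|x-y\|/(2r))$ without further work. The sharper route is to use that $M$ has reach $\ge r$. Equivalently, every point of $M$ avoids all open balls of radius $r$ tangent to $M$, in the sense of Federer's tangent-ball characterization $d(b-a,\mathrm{Tan}(M,a))\le\|b-a\|^2/(2r)$. Then compare $\gamma$ with the arc of a circle of radius $r$ through $x$ and $y$. One shows that the projected curve has length at most that of the shorter circular arc, which is exactly $2r\arcsin(\|x-y\|/(2r))$.

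I expect this comparison to be the main obstacle. A cleaner alternative avoids the explicit projected segment: take a $D_M$-shortest path (it exists by a length-space argument) and use the known curvature bound $1/r$ for geodesics in sets of reach $\ge r$. Schur's comparison theorem for curves with curvature $\le 1/r$ then gives chord length $\ge 2r\sin(\ell/(2r))$ for length $\ell\le \pi r$. This yields the claimed inequality, after a continuity argument in $\|x-y\|$ showing that $\ell$ stays below $\pi r$. The curvature bound itself is the delicate point. I would derive it from the tangent-ball inequality applied at each point along the geodesic, obtaining $\|\dot\gamma(s)-\dot\gamma(t)\|\le|s-t|/r$.

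\emph{Step 2: $\tau'\le\tau_M$.} Suppose $r$ is admissible in the statement's supremum, and assume for contradiction that $\tau_M<r$. Then there is a point $z$ with $d(z,M)<r$ having two nearest points $x\ne y$ in $M$, or a limiting configuration in which such pairs arise arbitrarily close to each other. In the two-point case, $M$ avoids the open ball $B(z,d(z,M))$. Any path in $M$ from $x$ to $y$ must therefore go around this ball, so its length is at least the length of the great arc on the sphere of radius $s=d(z,M)<r$ joining $x$ and $y$. That arc has length $2s\arcsin(\|x-y\|/(2s))$, which is strictly larger than $2r\arcsin(\|x-y\|/(2r))$. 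This contradicts admissibility of $r$.

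The degenerate case, where the reach is attained only in the limit, is handled by applying the same argument to nearby points, using the characterization of reach via the closure of the medial axis. Combining both steps gives equality.
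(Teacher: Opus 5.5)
The paper gives no proof of this lemma; it is imported from \cite{boissonnatReachMetricDistortion2019}. So your argument has to stand on its own. Step~2 is sound. With the paper's definition of reach, $\tau_M<r$ directly yields a point $z\in M^r$ with two nearest points, so there is no limiting case to handle. The claim that any path in $M$ from $x$ to $y$ is at least as long as the great arc follows by composing it with the radial retraction onto $\partial B(z,s)$, which is $1$-Lipschitz outside the open ball.

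Step~1 is where the gap lies. It is the real content of the lemma, and neither of your routes actually carries it out.

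In the first route, the sentence ``one shows that the projected curve has length at most that of the shorter circular arc'' is the whole proof, and nothing you invoke delivers it. The Lipschitz constant of $\pi_M$ is not sharp, as you say yourself. Federer's inequality $d(b-a,\mathrm{Tan}(M,a))\le\|b-a\|^2/(2r)$ bounds distances to tangent cones, not lengths of projected curves. Moreover, the circle of radius $r$ attains the bound, so whatever argument you use must be exact.

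In the second route, you cannot quote the curvature bound $1/r$ for shortest paths as known here. The paper derives the regularity of geodesics ($\tau_M=\inf_\gamma\tau_\gamma$, hence $C^{1,1}$ geodesics) from this very lemma, so that would be circular. Nor can the bound be obtained ``from the tangent-ball inequality applied at each point''. That inequality is a property of $M$ alone: a plane satisfies it and contains curves of any curvature. Minimality of the path must therefore enter, through a first-variation argument showing that a Lipschitz shortest path has no corners and bends only in normal directions; Federer's empty normal balls would then cap the curvature at $1/r$. That argument is the crux, and it is missing.

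A direct way to close Step~1 is a midpoint-projection argument, which needs no shortest paths, curvature bounds or Schur. Fix $r<\tau_M$ and $x,y\in M$ with $\|x-y\|=2r\sin\theta<2r$. Let $m=\frac{x+y}{2}$, $p=\pi_M(m)$, $\delta=\|m-p\|$, $e=x-m$, and $n=(m-p)/\delta$ if $\delta>0$. By Federer, the open ball $B(p+rn,r)$ misses $M$, so $\|x-(p+rn)\|\ge r$ and $\|y-(p+rn)\|\ge r$. Adding these gives $\delta\le r(1-\cos\theta)$; separately they give $2(r-\delta)|\langle e,n\rangle|\le\|e\|^2-2r\delta+\delta^2$. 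Hence
\[
\max\bigl(\|x-p\|^2,\|y-p\|^2\bigr)\le\frac{r\,(\|e\|^2-\delta^2)}{r-\delta}\le 4r^2\sin^2(\theta/2),
\]
where the last step holds because the middle term is nondecreasing in $\delta$ on $[0,r(1-\cos\theta)]$; the case $\delta=0$ is immediate.

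Now iterate on each half. The $k$-th dyadic polygon has vertices in $M$ and $2^k$ edges, each of length at most $2r\sin(\theta/2^k)$, so its total length is at most $2r\theta$. Its part over each level-$k$ interval has length at most $2r\theta\,2^{-k}$. The vertex map is therefore Lipschitz on dyadic parameters. It extends to a Lipschitz path in the closed set $M$ of length at most $2r\theta=2r\arcsin(\|x-y\|/2r)$. This gives at once $\Gamma_M(x,y)\neq\emptyset$ and the sharp bound.
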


Given a path $\gamma \in \Gamma_M$ parameterized over an interval $I$ and without self-intersection, denote $\tau_\gamma$ the reach of the curve $\gamma(I) \subset \RR^\damb$ along with $D_\gamma = D_{\gamma(I)}$ the induced metric over the curve for short. $D_\gamma(x,y)$ is nothing more than the length of the curve $\gamma(I)$ between two intermediate point $x$ and $y$. In particular if $x$ and $y$ are the endpoints of $\gamma$ then $D_\gamma(x,y) = |\gamma|$. 
One consequence of \cref{lemma:metric_distortion_reach_characterization} is that the length of a geodesic between two points at most $2\tau_M$ apart is upper bounded by the length of an arc of radius $\tau_M$ between both points. That is, if $x,y\in M$ and $\|x-y\|<2\tau_M$ then
\begin{equation}
    \label{eq:arcsin_inequality}
    D_M(x,y)
    \le 2\tau_M \arcsin\Lp\frac{\|x-y\|}{2\tau_M}\Rp~.
\end{equation}
Moreover, if $\gamma$ is a geodesic, it then induces a geodesic between any of the points it goes through, which implies that $D_\gamma$ is exactly the restriction of $D_M$ to the curve $\gamma$. Then, another consequence of the characterization of \cref{lemma:metric_distortion_reach_characterization} is that the reach of $M$ is the minimal reach of any geodesic, that is
\begin{equation*}
    \tau_M
    = \inf_{\gamma\in\Gamma^\star_M} \tau_\gamma~.
\end{equation*}
We now introduce a notion of reach associated with the conformal deformation of $M$ by~$f$ using the same point of view of the metric and its geodesics.
\begin{definition}
The \emph{conformal reach} of $M$ via $f$ is defined as the minimal reach of any geodesic \ac{wrt} the conformal metric, that is
\begin{equation*}
    \tau_{M,f}
    \eqdef \inf_{\gamma\in\Gamma^\star_{M,f}} \tau_\gamma~.
\end{equation*}
\end{definition}
In the case of the induced metric this notion coincides with the usual notion of reach, that is $\tau_{M,1} = \tau_M$.
The characterization given by \cref{lemma:metric_distortion_reach_characterization} also holds for the conformal reach.
\begin{proposition}
\label{prop:metric_distortion_reach_characterization_conformal}
The conformal reach of $M$ via $f$ may be expressed as
\begin{equation}
    \label{eq:metric_distortion_reach_characterization_conformal}
    \tau_{M,f}
    = \sup\La
    \begin{gathered}
        r>0 ~:~ \forall x,y\in M, \forall \gamma\in\Gamma^\star_{M,f}(x,y),\\
        \|x-y\|<2r \Rightarrow |\gamma| \le 2r \arcsin\Lp\frac{\|x-y\|}{2r}\Rp
    \end{gathered}
    \Ra~.
\end{equation}
\end{proposition}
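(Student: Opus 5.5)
The plan is to apply \cref{lemma:metric_distortion_reach_characterization} to each curve $\gamma(I)$, $\gamma\in\Gamma^\star_{M,f}$, viewed as a domain of its own, and then take the infimum over all conformal geodesics. Write $S$ for the set on the right-hand side of \eqref{eq:metric_distortion_reach_characterization_conformal}. For a fixed geodesic $\gamma$, \cref{lemma:metric_distortion_reach_characterization} gives
\begin{equation*}
    \tau_\gamma = \sup\La r>0 ~:~ \forall u,v\in\gamma(I),~ \|u-v\|<2r \Rightarrow D_\gamma(u,v)\le 2r\arcsin\Lp\frac{\|u-v\|}{2r}\Rp\Ra .
\end{equation*}
Call this set $S_\gamma$. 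The key structural fact is the conformal analogue of the remark made after \cref{lemma:metric_distortion_reach_characterization}. Namely, a conformal geodesic has no self-intersection, since a loop could be cut out to strictly shorten the conformal length, using $f\ge\fmin>0$. Moreover, any sub-arc of $\gamma$ between $u=\gamma(s)$ and $v=\gamma(t)$ is itself in $\Gamma^\star_{M,f}(u,v)$, otherwise splicing in a better path would contradict minimality of $\gamma$. Since that sub-arc has Euclidean length exactly $D_\gamma(u,v)$, the pairs $(u,v)$ and sub-arcs quantified in $S_\gamma$, ranging over all $\gamma$, are exactly the pairs $(x,y)$ and geodesics quantified in $S$.

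Next I would show that each set is an interval of the form $(0,\cdot)$. The function $r\mapsto 2r\arcsin(\delta/2r)$ is nonincreasing in $r$ on $r>\delta/2$: the derivative is $2\arcsin(s)-2s/\sqrt{1-s^2}\le 0$ with $s=\delta/2r$. Also, the hypothesis $\|x-y\|<2r$ grows with $r$. Hence if $r$ belongs to $S_\gamma$ (or to $S$), then so does every $r'<r$. This gives $S=\bigcap_{\gamma}S_\gamma$ by the identification above: $r\in S$ iff the inequality holds for every geodesic and every pair on it iff $r\in S_\gamma$ for all $\gamma$. Since each $S_\gamma$ is an initial segment $(0,\tau_\gamma)$ or $(0,\tau_\gamma]$, the supremum of the intersection is $\inf_\gamma \sup S_\gamma=\inf_\gamma\tau_\gamma=\tau_{M,f}$, with the convention that the empty set gives $0$.

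The main obstacle is bookkeeping rather than analysis. Two points need care. First, \cref{lemma:metric_distortion_reach_characterization} must apply to the compact curve $\gamma(I)$ (closed, path-connected), and this should be fine. Second, the interchange $\sup\bigcap=\inf\sup$ for initial segments must be handled correctly. One direction is immediate: if $r<\tau_\gamma$ for all $\gamma$, then $r\in S_\gamma$ for all $\gamma$, so $r\in S$. Conversely, if $r\in S$, then $r\le\tau_\gamma$ for every $\gamma$. I would also double-check the no-self-intersection claim, so that $D_\gamma$ is the arc length along $\gamma$ and $\tau_\gamma$ is well defined as in the paper.
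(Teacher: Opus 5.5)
Your proposal is correct and follows essentially the same route as the paper: you write the set as $\bigcap_\gamma S_\gamma$ because sub-arcs of conformal geodesics are conformal geodesics, apply \cref{lemma:metric_distortion_reach_characterization} to each curve, and use that $r\mapsto 2r\arcsin(\|x-y\|/2r)$ is non-increasing, so each $S_\gamma$ is an initial segment and $\sup\bigcap_\gamma S_\gamma=\inf_\gamma\sup S_\gamma=\tau_{M,f}$. Your extra care about the absence of self-intersections (so that $|\gamma|=D_\gamma(x,y)$) and about both directions of the sup/inf interchange fills in details the paper leaves implicit.
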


Beware that \cref{eq:metric_distortion_reach_characterization_conformal} involves geodesic \ac{wrt} the conformal metric $D_{M,f}$, but compares their Euclidean length---not conformal---to the one of an arc of radius $r$.

\begin{proof}
Denote $A$ the subset of $\RR_+^*$ that appears in the right-hand side of \cref{eq:metric_distortion_reach_characterization_conformal} and let us show that its supremum is indeed $\tau_{M,f}$. Since a geodesic induces a geodesic between any pair of points it goes through, one can write $A = \bigcap_\gamma A_\gamma$ where the intersection is taken over all geodesics \ac{wrt} $D_{M,f}$ and
\begin{equation*}
    A_\gamma
    = \La r>0 ~:~ \forall x,y\in \gamma,~ \|x-y\|<2r \Rightarrow D_\gamma(x,y) \le 2r \arcsin\Lp\frac{\|x-y\|}{2r}\Rp\Ra~.
\end{equation*}
\Cref{lemma:metric_distortion_reach_characterization} states that $\tau_\gamma = \sup A_\gamma$ and standard reasoning over ordered sets shows that
\begin{equation*}
    \tau_{M,f}
    = \inf_\gamma \tau_\gamma
    = \inf_\gamma \sup A_\gamma
    \ge \sup \bigcap_\gamma A_\gamma
    = \sup A~.
\end{equation*}
Moreover, for all $x,y\in M$, $r\mapsto 2r \arcsin\bLp\|x-y\|/2r\bRp$ is a non-increasing function over $(\|x-y\|,\pinfty)$, which implies that for all $\gamma$, if $r \in A_\gamma$ then $s \in A_\gamma$ for all $0 < s\le r$.
In particular, it follows that the inequality above is in fact an equality, hence $\tau_{M,f} = \sup A$ which concludes the proof.
\end{proof}

Using \cref{prop:metric_distortion_reach_characterization_conformal}, we are able to lower bound the reach of any geodesic \ac{wrt} $D_{M,f}$.
\begin{proposition}
\label{prop:geodesic_reach}
Assume that $M$ has positive reach $\tau_M > 0$ and that $f$ is $\kappa$-Lipschitz and lower bounded by $\fmin >0$.
Then any conformal geodesic $\gamma$ \ac{wrt} $D_{M,f}$ has positive reach $\tau_\gamma>0$.
Precisely, the conformal reach of $M$ via $f$ is lower bounded as follows.
\begin{equation*}
    \tau_{M,f}
    \ge \cT_{M,f}
    \quad\textrm{where}\quad
    \cT_{M,f}
    \eqdef \min\Lp \frac{\tau_M}{2} ~,~ \frac{\fmin}{8\kappa} \Rp~.
\end{equation*}
\end{proposition}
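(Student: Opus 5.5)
Via \cref{prop:metric_distortion_reach_characterization_conformal}, it suffices to show that $r = \cT_{M,f}$ belongs to the set in \cref{eq:metric_distortion_reach_characterization_conformal}. Concretely, fix $x,y\in M$ with $\|x-y\|<2r$ and a conformal geodesic $\gamma\in\Gamma^\star_{M,f}(x,y)$, parameterized by arc length. We must show
\begin{equation*}
|\gamma| \le 2r\arcsin\Lp\frac{\|x-y\|}{2r}\Rp.
\end{equation*}
The strategy compares $\gamma$ with a Euclidean-shortest path $\sigma\in\Gamma^\star_M(x,y)$. By \cref{eq:arcsin_inequality} with $r\le \tau_M/2$, $\sigma$ satisfies $|\sigma|\le 2\tau_M\arcsin(\|x-y\|/2\tau_M)$, which is at most a constant times $\|x-y\|$.

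\emph{Step 1: Confinement.} Conformal minimality, $|\gamma|_f\le|\sigma|_f$, gives $\fmin|\gamma|\le (f(x)+\kappa|\sigma|/2)|\sigma|$. Hence $|\gamma|\le (1+O(\kappa\|x-y\|/\fmin))|\sigma|$, so $\gamma$ stays in a small Euclidean ball around $x$ of radius comparable to $\|x-y\|\le 2r\le \fmin/(4\kappa)$. On that ball, $f$ varies by at most a factor $1\pm\frac{1}{4}$-ish.

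\emph{Step 2: A local curvature bound for $\gamma$.} This is the key step. The claim is that $\gamma$ is $\cC^{1,1}$ with $\|\ddot\gamma\|\le 1/\tau_M + 2\kappa/\fmin$ almost everywhere, or the corresponding discrete statement. The intuition is the Euler--Lagrange equation: away from the constraint, $f\ddot\gamma = (\nabla f)^\perp$, giving curvature $\le \kappa/\fmin$. When the constraint is active, the normal force lies in the normal cone of $M$, and positive reach bounds the curvature contributed by the constraint by $1/\tau_M$.

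Since $M$ is not smooth, I would argue without derivatives. Take three points $\gamma(t-h),\gamma(t),\gamma(t+h)$ and replace the middle portion by a Euclidean geodesic of $M$ (or by a projection onto $M$ of a perturbed path, using the $1$-Lipschitz behavior of $\pi_M$ on $M^{\tau_M/2}$ with constant $\tau_M/(\tau_M-\epsilon)$). Optimality of $\gamma$ and the Lipschitz bound on $f$ then yield that the turning angle over a length $2h$ is $O(h(1/\tau_M+\kappa/\fmin))$.

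\emph{Step 3: Integration.} A curve with curvature bounded by $1/\rho$ and with endpoints at distance $<2\rho$, which is also globally length-controlled by Step 1 (so that it cannot wind around), satisfies the arc inequality $|\gamma|\le 2\rho\arcsin(\|x-y\|/2\rho)$. This is Schur's comparison/chord-arc lemma for curves of bounded curvature. With $1/\rho \le 1/\tau_M+2\kappa/\fmin$, we get $\rho\ge\min(\tau_M/2,\fmin/(8\kappa))$ up to checking constants, since $1/\tau_M+2\kappa/\fmin\le 2\max(\dots)$. The constant $8$ then absorbs the factors lost in Step 1. The step requiring Step 1 is ensuring the short-chord regime, so that Schur's lemma applies without the curve closing up.

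\textbf{Main obstacle.} The main difficulty is Step 2 for nonsmooth $M$: rigorously getting the curvature bound from the positive reach constraint without a differential structure. I would first try the variational projection argument. Perturb $\gamma$ by straightening a short sub-arc into its chord, project it back onto $M$ via $\pi_M$, and bound the length distortion using Federer's estimate $\|\pi_M(a)-\pi_M(b)\|\le \frac{\tau_M}{\tau_M-\epsilon}\|a-b\|$. Then compare conformal lengths to the first order in $\kappa/\fmin$.
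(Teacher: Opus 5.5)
\textbf{The gap is Step 2.} The local curvature bound $\|\ddot\gamma\|\le 1/\tau_M+2\kappa/\fmin$ carries the whole content of the proposition, and you only assert it. The Euler--Lagrange heuristic is unavailable for a nonsmooth $M$ and a merely Lipschitz $f$. The projection argument you plan to use instead is too lossy for the constants in $\cT_{M,f}$, and Step 3 needs those constants exactly ("up to checking constants" is where the plan breaks).

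Test the projection argument on a circular sub-arc of curvature $k$ and length $2h$. Straightening it gains about $k^2h^3/3$ of length. Federer's bound $\|\pi_M(a)-\pi_M(b)\|\le\frac{\tau_M}{\tau_M-\epsilon}\|a-b\|$, integrated against the sagitta $\epsilon(s)\approx\frac k2(h^2-s^2)$ along the chord, only guarantees that projecting back costs at most about $\frac{2kh^3}{3\tau_M}$ (about $\frac{kh^3}{\tau_M}$ with a uniform $\epsilon$). So even with $\kappa=0$ the argument rules out only $k>2/\tau_M$. Once a $\kappa/\fmin$ term is added you get $\rho\ge 1/(2/\tau_M+c\,\kappa/\fmin)$, which is strictly below $\cT_{M,f}=\tau_M/2$ whenever $0<\kappa\le\fmin/(4\tau_M)$.

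Step 1 is also wrong as written. The inequality $\fmin|\gamma|\le(f(x)+\kappa|\sigma|/2)|\sigma|$ leaves a factor $f(x)/\fmin$, which is unbounded. Fixing it requires a lower bound on $f$ along $\gamma$ relative to $f(x)$, which is the confinement you are trying to prove.

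\textbf{What is missing is that no curvature bound is needed.} The first competitor you mention, a $D_M$-geodesic, is the right one, and it can be used directly on $\gamma$ between $x$ and $y$. Keep your reduction $r=\cT_{M,f}$, suppose $\|x-y\|<2r$ and $|\gamma|>2r\arcsin(\|x-y\|/2r)$, and set $e=|\gamma|-\|x-y\|>0$.

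(a) Apply \cref{eq:arcsin_inequality} to $\sigma$, together with $\arcsin t-t\le\frac18(\arcsin 2t-2t)$, $2r\le\tau_M$, and the monotonicity of $s\mapsto s\arcsin(c/s)$. This gives $|\sigma|-\|x-y\|\le e/4$, hence $|\sigma|-|\gamma|\le-\frac34 e$.

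(b) The lower bound $\arcsin t-t\ge t^3/6$ gives $\|x-y\|^3\le 24r^2e$.

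(c) Parametrize both paths at constant speed over $[0,1]$ and let $\overline\gamma$ be the segment. Then $\|\gamma(t)-\overline\gamma(t)\|^2\le t(1-t)(|\gamma|^2-\|x-y\|^2)$, and likewise for $\sigma$. So the averages of $f$ along $\gamma$ and $\sigma$ differ by at most $\frac{\kappa}{\sqrt6}\bigl(\sqrt{|\gamma|^2-\|x-y\|^2}+\sqrt{|\sigma|^2-\|x-y\|^2}\bigr)$. Multiplying by $|\sigma|\le\frac\pi3\|x-y\|$ and using (b) and \cref{eq:arcsin_inequality} once more, this is at most $C\kappa r e$ with an explicit $C<5$.

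Hence
$|\sigma|_f-|\gamma|_f=(|\sigma|-|\gamma|)\frac{|\gamma|_f}{|\gamma|}+|\sigma|\bigl(\frac{|\sigma|_f}{|\sigma|}-\frac{|\gamma|_f}{|\gamma|}\bigr)\le(-\frac34\fmin+C\kappa r)e<0$,
since $r\le\fmin/(8\kappa)$. This contradicts the minimality of $\gamma$.

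This is the paper's proof. The Euclidean gain and the cost from the variation of $f$ are both linear in the excess $e$, and the cost carries an extra factor $\kappa r/\fmin$. No confinement, derivatives, projection or Schur lemma is needed.
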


The reasoning behind \cref{prop:geodesic_reach} is the following. If the reach of a geodesic $\gamma$ \ac{wrt} $D_{M,f}$ is small compared to $\tau_M$, then according to \cref{prop:metric_distortion_reach_characterization_conformal} there exists a path in $M$ significantly shorter than $\gamma$ Euclidean-wise. If $\tau_\gamma$ is also small compared to $\fmin/\kappa$, then this path is shown to also have a shorter conformal length than $\gamma$ due to the properties of $f$, which implies a contradiction with the geodesic nature of $\gamma$. This reasoning is detailed in \cref{app:geodesic_reach}.
Now, consider an arc-length parameterized geodesic $\gamma:I\to M$ \ac{wrt} $D_{M,f}$, \ie, such that $\|\dot\gamma\|=1$ almost everywhere over $I$. Being a geodesic, $\gamma$ has no self-intersection.
Then, stating that $\gamma$ has positive reach is equivalent to stating that $\gamma$ is a $\cC^{1,1}$ curve, \ie, that $\dot\gamma$ is Lipschitz \ac{wrt} the angular distance. Precisely, for all $t,s\in I$, 
\begin{equation}
    \label{eq:curve_reach_C11_angle}
    \angle(\dot\gamma(t),\dot\gamma(s))
    \le \frac{|t-s|}{\tau_\gamma}~.
\end{equation}
See \cite[Remark 4.20]{federerCurvatureMeasures1959} and \cite[Theorem 4]{lieutierManifoldsPositiveReach2024} for references.
\Cref{eq:curve_reach_C11_angle} allows to obtain a finer control on the approximation error of the polygonal paths on the graph by upper bounding efficiently the difference between small successive steps $\gamma(t)-\gamma(t-\delta)$ and $\gamma(t+\delta)-\gamma(t)$ of a geodesic path.

\begin{lemma}
\label{lemma:path_direction_lipschitz}
Let $\gamma : [0, |\gamma|] \to \RR^\damb$ be an arc-length parameterized curve without self-intersection and with positive reach $\tau_\gamma$.
Then for all $t_0\in[0, |\gamma|]$ and $\delta \in (0, \frac\pi2 \tau_\gamma]$ such that $[t_0-\delta, t_0+\delta] \subset [0,|\gamma|]$, the angle between the velocity vector of $\gamma$ at $t_0$ and the direction from $\gamma(t_0)$ to $\gamma(t_0+\delta)$ is upper bounded as follows:
\begin{equation}
    \label{eq:path_direction_lipschitz_1}
    \angle\bLp \gamma(t_0+\delta)-\gamma(t_0), \dot\gamma(t_0)\bRp
    \le \frac{\delta}{2\tau_\gamma}~.
\end{equation}
Moreover, denoting $v^- = \gamma(t_0)-\gamma(t_0-\delta)$ and $v^+ = \gamma(t_0+\delta)-\gamma(t_0)$, the difference between small steps of the path on both side from $t_0$ is upper bounded as follows:
\begin{equation}
    \label{eq:path_direction_lipschitz_2}
    \Lvv \frac{v^+}{\|v^+\|} - \frac{v^-}{\|v^-\|} \Rvv
    \le \frac{1}{\tau_\gamma} \min\bLp\|v^+\|, \|v^-\|\bRp~.
\end{equation}
\end{lemma}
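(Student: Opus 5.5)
The plan is to derive everything from the angular Lipschitz bound \cref{eq:curve_reach_C11_angle}, $\angle(\dot\gamma(t),\dot\gamma(s))\le |t-s|/\tau_\gamma$. Set $u=\dot\gamma(t_0)$ and $\theta(t)=\angle(\dot\gamma(t),u)$. For $t\in[t_0,t_0+\delta]$ we then have $\theta(t)\le (t-t_0)/\tau_\gamma\le \delta/\tau_\gamma\le \pi/2$. This is exactly why $\delta\le\frac\pi2\tau_\gamma$ is required: it keeps every component $\langle\dot\gamma(t),u\rangle=\cos\theta(t)$ nonnegative, and it makes $s\mapsto\cos(s/\tau_\gamma)$ decreasing on the relevant range. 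Assuming $\dot\gamma(t_0)$ exists (otherwise use the $\cC^1$ representative given by positive reach), write
\[
v^+=\gamma(t_0+\delta)-\gamma(t_0)=\int_{t_0}^{t_0+\delta}\dot\gamma(t)\,\d t .
\]

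For \cref{eq:path_direction_lipschitz_1}, bound the two pieces of $\cos\angle(v^+,u)=\langle v^+,u\rangle/\|v^+\|$ separately. On the numerator,
\[
\langle v^+,u\rangle\ \ge\ \int_0^\delta \cos(s/\tau_\gamma)\,\d s=\tau_\gamma\sin(\delta/\tau_\gamma).
\]
On the denominator, $\|v^+\|\le\delta$. This only gives $\cos\angle\ge \operatorname{sinc}(\delta/\tau_\gamma)$, which is too weak. To get the sharper $\delta/(2\tau_\gamma)$, decompose instead $\dot\gamma(t)=\cos\theta(t)\,u+\sin\theta(t)\,w(t)$ with $w(t)\perp u$ a unit vector. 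Then
\[
\tan\angle(v^+,u)\le \frac{\int\sin\theta}{\int\cos\theta}\le \frac{\int_0^\delta \sin(s/\tau_\gamma)\,\d s}{\int_0^\delta\cos(s/\tau_\gamma)\,\d s}=\frac{1-\cos(\delta/\tau_\gamma)}{\sin(\delta/\tau_\gamma)}=\tan\frac{\delta}{2\tau_\gamma}.
\]
Both integrals are monotone in $\theta$ in the right direction, since $\theta\le s/\tau_\gamma\le\pi/2$. Because $\tan$ is increasing on $[0,\pi/2)$, this gives \cref{eq:path_direction_lipschitz_1}. I expect this comparison to be the main obstacle. Each integrand is bounded pointwise by the extremal circular-arc profile, but the ratio of integrals needs care: the numerator bound uses the norm of a vector integral, and the denominator bound uses a lower bound. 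The degenerate case $\theta\equiv0$ must also be handled.

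For \cref{eq:path_direction_lipschitz_2}, apply the same bound backward. Reversing the parametrisation gives $\angle(v^-,u)\le\delta/(2\tau_\gamma)$, so $\angle(v^+,v^-)\le\delta/\tau_\gamma$. The chord between the two unit vectors then satisfies $\|v^+/\|v^+\|-v^-/\|v^-\|\|=2\sin(\angle/2)\le\angle\le\delta/\tau_\gamma$.

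It remains to replace $\delta$ by $\min(\|v^+\|,\|v^-\|)$, which a priori is smaller than $\delta$. My first attempt is to sharpen the chord estimate itself. We have $2\sin(\angle/2)\le 2\sin(\delta/(2\tau_\gamma))$. Comparing with the arc bound, $\|v^\pm\|\ge 2\tau_\gamma\sin(\delta/(2\tau_\gamma))$, which follows from \cref{lemma:metric_distortion_reach_characterization} applied to the curve: $\delta=D_\gamma\le 2\tau_\gamma\arcsin(\|v^\pm\|/2\tau_\gamma)$. Combining the two yields
\[
\Big\|\tfrac{v^+}{\|v^+\|}-\tfrac{v^-}{\|v^-\|}\Big\|\le 2\sin\frac{\delta}{2\tau_\gamma}\le \frac{\|v^\pm\|}{\tau_\gamma}
\]
for both signs, hence the minimum.
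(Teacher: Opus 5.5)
Your proof is correct. For \cref{eq:path_direction_lipschitz_2} it matches the paper's argument: the triangle inequality on angles, the chord formula $2\sin(\angle/2)$, and $\delta\le 2\tau_\gamma\arcsin(\|v^\pm\|/2\tau_\gamma)$ from \cref{lemma:metric_distortion_reach_characterization}, which applies since $\|v^\pm\|\le\delta<2\tau_\gamma$. The intermediate step $2\sin(\angle/2)\le\angle$ is not needed.

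For \cref{eq:path_direction_lipschitz_1} you take a different route, and it is the sounder one. Write $x=\delta/\tau_\gamma$. The paper uses only $\|v^+\|\le\delta$ to bound $\langle v^+/\|v^+\|,\dot\gamma(t_0)\rangle$ from below by $\frac1\delta\int_{t_0}^{t_0+\delta}\cos((t-t_0)/\tau_\gamma)\,\mathrm{d}t=\sin(x)/x$. It then claims $\arccos(\sin(x)/x)\le x/2$ by concavity of $\arccos$ on $[0,1]$ and Jensen. That step runs the wrong way: concavity makes $\arccos$ of a mean at least the mean of $\arccos$. In fact $\arccos(\sin(x)/x)>x/2$ for every $x\in(0,\pi)$, because $\sin x<x\cos(x/2)$ is equivalent to $\sin(x/2)<x/2$; for $x=1$ the left side is $\pi/2-1>1/2$. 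So, as you noticed, the sinc bound is too weak and gives only about $x/\sqrt3$.

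Your decomposition $\dot\gamma=\cos\theta\,u+\sin\theta\,w$ separates the two effects. The component of $v^+$ orthogonal to $u$ has norm at most $\int\sin\theta\le\tau_\gamma(1-\cos x)$. The component along $u$ is at least $\tau_\gamma\sin x>0$. Then $(1-\cos x)/\sin x=\tan(x/2)$ with $x/2\le\pi/4$ gives the sharp bound, which a circular arc attains. The obstacle you anticipated does not arise: you upper-bound a nonnegative numerator and lower-bound a positive denominator, which is exactly what an upper bound on the ratio requires. The case $\theta\equiv0$ simply gives angle $0$. Your argument therefore gives a valid proof of \cref{eq:path_direction_lipschitz_1}, where the paper's written proof does not.
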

\Cref{lemma:path_direction_lipschitz} is key to obtain an approximation error of the conformal metric proportional to the Hausdorff distance between the domain and a point cloud.
A proof is provided in \cref{app:proof:path_direction_lipschitz} and \cref{eq:path_direction_lipschitz_1} is illustrated by \cref{fig:path_direction_lipschitz}.
\begin{figure}[tb]
    \centering
    \includegraphics[width=0.5\textwidth]{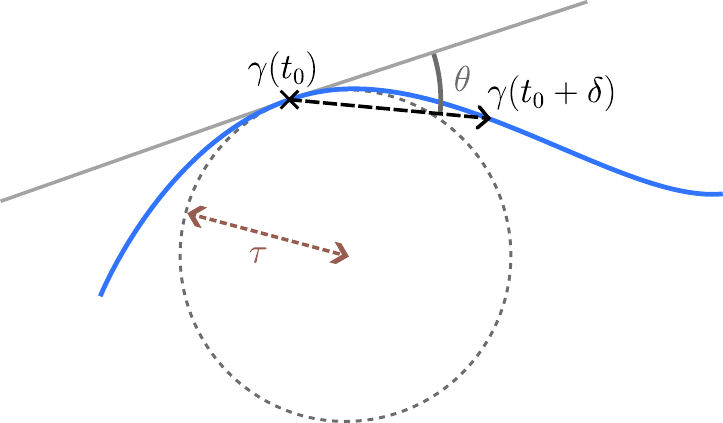}
    \caption{Bounding the angular velocity of a path with positive reach.}
    \label{fig:path_direction_lipschitz}
\end{figure}

\section{Polygonal Approximation of the Conformal Metric}
\label{sec:polygonal_approximation}

Consider a point cloud $X\subset M$ of points sampled from the domain. If the point cloud approximates well the domain, it is possible to estimate the conformal distance over $M$ through weighted polygonal paths built over $X$ within a small margin of error. Assuming that $f$ is known, the edges of the polygonal path are weighted using $f$ to approximate the conformal distance between their endpoints.
The degree of approximation of $M$ by $X$ is measured using the Hausdorff distance
\begin{equation*}
    \hausdorff(M,X)
    \eqdef \max\Lp \sup_{x\in M} d(x,X) ~,~ \sup_{x\in X} d(x,M) \Rp
\end{equation*}
which simplifies to $\hausdorff(M,X) = \sup_{x\in M} d(x,X)$ assuming that $X\subset M$. By definition, any point in $M$ is at distance at most $\hausdorff(M,X)$ from a point in $X$.

The idea behind the polygonal approximation is the following. Consider a geodesic ${\gamma \in \Gamma^\star_{M,f}(x,y)}$ between two points $x,y\in M$. If $\hausdorff(M,X)$ is small, there exists a polygonal path over $X$ that follows closely the trajectory of $\gamma$. Assuming that the edges of this path are equipped with appropriate weights to simulate the conformal distance between their endpoints, the total weight of the polygonal path should be close to the conformal length~$|\gamma|_f$. Moreover, since $\gamma$ is a geodesic, polygonal paths should not be able to have a weight much smaller than $|\gamma|_f$. This is ensured by using only short edges, which allows the weights to approximate well the conformal length by preventing shortcuts outside the domain.
In the end, the shortest weighted path on the graph is expected to retrieve $|\gamma|_f = D_{M,f}(x,y)$.

\subsection{Weighted Graphs}
\label{sec:weighted_graphs}

Let us now describe formally this construction.
The polygonal approximation of $D_{M,f}$ is defined as the metric of an appropriate weighted graph on a point cloud $X$.
\begin{definition}
\label{def:graphs}
Let $X \subset \RR^\damb$ be a finite point cloud.
\begin{itemize}
    \item For all $r>0$, the \emph{$r$-ball graph} over $X$ is denoted $G_r(X)$ and is the graph of vertex set $X$ and with an edge between points $x$ and $y$ if and only if $\|x-y\| \le r$.
    \item For all integer $k\ge1$, the \emph{$k$-nearest-neighbors graph} (or $k$-NN graph) over $X$ is denoted~$\cG_k(X)$ and is the graph of vertex set $X$ with an edge between two points $x$ and $y$ if and only if $x$ (or $y$) is among the $k$ nearest points of $X$ to $y$ (or $x$) excluding self.
\end{itemize}
In both cases, the parameter $r$ or $k$ is referred to as the \emph{threshold} of the graph.
\end{definition}

Ball graphs are more convenient to study whereas NN graphs are more practical, see \cref{sec:convergence_NN_graph}.
We define the polygonal metric in both contexts.
The graph used is equipped with a weight function to approximate the conformal length of its edges.
\begin{definition}
\label{def:weight_function}
Given a function $f:\RR^\damb\to\RR_+^*$, consider the weight functions
\[
    w_{f,q}(x,y)
    \eqdef \frac{\|x-y\|}{2(q-1)} \Lp f(x) + 2\sum_{k=2}^{q-1} f\Lp\frac{q-k}{q-1}x + \frac{k-1}{q-1}y\Rp + f(y) \Rp
\]
defined for any $q\ge2$ and $x,y \in \RR^\damb$, along with
\begin{equation*}
    w_{f,\infty}(x,y)
    \eqdef \|x-y\| \int_0^1 f\bLp(1-t)x+ty\bRp \d t~.
\end{equation*}
The parameter $q \in \{2,\dots,\infty\}$ is referred to as the \emph{resolution} of the weights.
\end{definition}

The weight function $w_{f,q}$ is meant to approximate the conformal distance between the endpoints of an edge. \Cref{prop:weight_distortion} below shows that it is indeed the case when the endpoints are close, which is the reason why graphs with small edges are used.
Recall that we have assumed without loss of generality that $f$ is defined over the whole space, allowing its evaluation outside $M$ when $q\ne2$. As $q$ grows, the weight $w_{f,q}$ becomes more accurate and converges to the weight with infinite resolution $w_{f,\infty}$, the latter being exactly the conformal length of the straight path. However, the computational cost is linear with $q$ and there is no closed-form formula for $w_{f,\infty}$ in general. On the other hand,
\begin{equation*}
    w_{f,2}(x,y)
    = \|x-y\| \frac{f(x)+f(y)}{2}
\end{equation*}
is the simplest choice for the weights and may be more suitable if evaluating $f$ is possible only at points in $X$.
Note that the weight $w_{f,q}$ is the optimal approximation of $w_{f,\infty}$ using only $q$ samples of the Lipschitz function $f$.
Under stronger assumptions on $f$ such as $\cC^k$ regularity, other definitions would be better suited.
\begin{proposition}
\label{prop:weight_distortion}
For all $x,y\in M$ such that $\|x-y\| \le \tau_{M,f}$,
\begin{equation}
    \label{eq:delta_q}
    \metricratioOld{w_{f,q}(x,y)}{D_{M,f}(x,y)}
    \le  \frac{\kappa}{4\fmin}\frac{\|x-y\|}{q-1} + \frac{\|x-y\|^2}{16\cT_{M,f}^2}~.
\end{equation}
and we denote $\delta_q(\|x-y\|)$ this upper bound. 
The first term in \cref{eq:delta_q} is to be interpreted as $0$ if $q=\infty$ and represents the offset between $w_{f,q}$ and $w_{f,\infty}$, whereas the second term represents the offset between $w_{f,\infty}$ and~$D_{M,f}$.
\end{proposition}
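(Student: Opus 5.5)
The plan is to split the error through the infinite-resolution weight. Since $D_{M,f}(x,y)>0$,
\begin{equation*}
    \metricratioOld{w_{f,q}(x,y)}{D_{M,f}(x,y)}
    \le \frac{|w_{f,q}(x,y)-w_{f,\infty}(x,y)|}{D_{M,f}(x,y)} + \frac{|w_{f,\infty}(x,y)-D_{M,f}(x,y)|}{D_{M,f}(x,y)}~.
\end{equation*}
I will show that the first quotient is at most the first term of \cref{eq:delta_q} and the second quotient at most the second term. Write $L=\|x-y\|$. The denominator is bounded below by $D_{M,f}(x,y)\ge \fmin |\gamma| \ge \fmin L$ for any geodesic $\gamma$.

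\textbf{First term.} The weight $w_{f,q}$ is exactly the composite trapezoid rule for $L\int_0^1 f((1-t)x+ty)\,\mathrm{d}t$, with $q-1$ subintervals of parameter length $1/(q-1)$. The integrand $g(t)=f((1-t)x+ty)$ is $\kappa L$-Lipschitz in $t$; this uses the Lipschitz extension of $f$ to $\RR^\damb$, since the segment may leave $M$. For a $\lambda$-Lipschitz $g$ on an interval of length $h$, the trapezoid error is at most $\lambda h^2/4$. This follows by bounding $|g(s)-\tfrac{g(a)+g(b)}2|\le \tfrac\lambda2\max(s-a,b-s)$, or more sharply via $\min$, and integrating. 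Summing over the $q-1$ subintervals and multiplying by $L$ gives $|w_{f,q}-w_{f,\infty}|\le \kappa L^2/(4(q-1))$. Dividing by $\fmin L$ yields $\frac{\kappa}{4\fmin}\frac{L}{q-1}$. When $q=\infty$ this term vanishes.

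\textbf{Second term.} Let $\gamma\in\Gamma^\star_{M,f}(x,y)$ and $\tau=\tau_{M,f}\ge\cT_{M,f}$. Because $L\le\tau<2\tau$, \cref{prop:metric_distortion_reach_characterization_conformal} gives
\begin{equation*}
    |\gamma|\le 2\tau\arcsin\Lp\frac{L}{2\tau}\Rp~.
\end{equation*}
With $u=L/(2\tau)\le 1/2$, the convexity of $\arcsin(u)/u$ then gives $|\gamma|/L-1\le c\,u^2$ for an explicit $c$, roughly $1/6$ up to a small correction on $[0,1/2]$.

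Next I bound the deviation of $\gamma$ from the chord. Every point $z$ of $\gamma$ satisfies $\|x-z\|+\|z-y\|\le|\gamma|$, so $z$ lies in an ellipse with foci $x,y$. Hence $z$ is within $\tfrac12\sqrt{|\gamma|^2-L^2}=O(L^2/\tau)$ of the segment, and in the parameterization matching $\gamma$ at constant speed against the chord $t\mapsto(1-t)x+ty$, the corresponding points are within $O(L^2/\tau)$ of each other. Comparing integrands with the Lipschitz bound on $f$ gives
\begin{equation*}
    \Lv |\gamma|_f - \frac{|\gamma|}{L}\,w_{f,\infty}(x,y)\Rv \le \kappa\, \mathrm{dev}\, |\gamma|~,
\end{equation*}
where $\mathrm{dev}$ is the maximal deviation. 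Combined with $w_{f,\infty}\le \max f\cdot L$ and $D=|\gamma|_f$, this yields a relative error of at most
\begin{equation*}
    \Lp\frac{|\gamma|}{L}-1\Rp + \frac{\kappa}{\fmin}\,\mathrm{dev}\,\frac{|\gamma|}{L}~.
\end{equation*}
I then insert $\kappa/\fmin\le 1/(8\cT_{M,f})$, $\tau\ge\cT_{M,f}$ and $u\le 1/2$. The first contribution is about $L^2/(24\cT^2)$ and the second about $L^2/(64\cT^2)$, and their sum should stay below $L^2/(16\cT_{M,f}^2)$.

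\textbf{Main obstacle.} The difficulty is the constant bookkeeping: the exact parameter matching for the deviation bound (matching arclength of $\gamma$ to the chord parameter), and checking that the numerical constants sum to at most $1/16$ on the whole range $L\le\tau_{M,f}$. If the ellipse bound is too lossy, I would instead use \cref{lemma:path_direction_lipschitz}: integrating the angle bound \cref{eq:path_direction_lipschitz_1} controls the pointwise distance between $\gamma(t|\gamma|)$ and $(1-t)x+ty$ by roughly $L^2/(8\tau)$.
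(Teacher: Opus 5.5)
Your architecture matches the paper's: you split through $w_{f,\infty}$, bound the first term by the trapezoid error, and handle the second term with the arcsine bound on $|\gamma|/L$, a deviation-from-chord estimate and the Lipschitz comparison. The second term, however, has genuine gaps.

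First, the combination step does not go through as justified. Splitting $D_{M,f}-w_{f,\infty}=\bigl(|\gamma|_f-\tfrac{|\gamma|}{L}w_{f,\infty}\bigr)+\bigl(\tfrac{|\gamma|}{L}-1\bigr)w_{f,\infty}$ and dividing by $D_{M,f}$ leaves a factor $w_{f,\infty}/D_{M,f}$. That factor is not $\le 1$ in general, since the chord may leave $M$ and cross a region where $f$ is large. Bounding $w_{f,\infty}\le L\max f$ against $D_{M,f}\ge\fmin L$ only produces the uncontrolled ratio $\max f/\fmin$. The paper instead writes $D_{M,f}-w_{f,\infty}=\bigl(1-\tfrac{L}{|\gamma|}\bigr)D_{M,f}+\tfrac{L}{|\gamma|}\bigl(|\gamma|_f-\tfrac{|\gamma|}{L}w_{f,\infty}\bigr)$. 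With $D_{M,f}\ge\fmin|\gamma|$, this gives a relative error at most $\bigl(1-\tfrac{L}{|\gamma|}\bigr)+\tfrac{\kappa}{\fmin}\tfrac{L}{|\gamma|}\int_0^1\|\gamma(t)-\bar\gamma(t)\|\,\mathrm{d}t$, with $\gamma$ at constant speed on $[0,1]$ and $\bar\gamma$ the chord.

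Second, your ellipse argument only bounds the distance from $\gamma(t)$ to the line, not the offset between matched parameters. The paper handles this with the identity $\|\gamma(t)-\bar\gamma(t)\|^2=(1-t)a^2+tb^2-t(1-t)L^2$, where $a=\|\gamma(t)-x\|\le t|\gamma|$ and $b=\|\gamma(t)-y\|\le(1-t)|\gamma|$. This gives $\|\gamma(t)-\bar\gamma(t)\|^2\le t(1-t)(|\gamma|^2-L^2)$, hence an averaged deviation at most $\tfrac1{\sqrt6}\sqrt{|\gamma|^2-L^2}$ (\cref{lemma:path_deviation}).

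Third, the constant $1/16$ is tighter than your bookkeeping suggests. $L$ may be as large as $\tau_{M,f}$, which you cannot assume exceeds $\cT_{M,f}$, while your $1/24$ and $1/64$ correspond to $|\gamma|\approx L$. In fact your final expression $\bigl(\tfrac{|\gamma|}{L}-1\bigr)+\tfrac{\kappa}{\fmin}\,\mathrm{dev}\,\tfrac{|\gamma|}{L}$ cannot be bounded by $\tfrac{L^2}{16\cT_{M,f}^2}$ from the information you use. Take an arc of radius $\tau_{M,f}=\cT_{M,f}$ with chord $L=\tau_{M,f}$, and $\kappa/\fmin=1/(8\cT_{M,f})$. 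Then $\tfrac{|\gamma|}{L}-1=\tfrac\pi3-1\approx0.047$, and $\mathrm{dev}$ is at least the sagitta $(1-\tfrac{\sqrt3}{2})\cT_{M,f}$. So the expression is at least $0.0647\,L^2/\cT_{M,f}^2$, which exceeds $1/16\approx0.0625$.

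With the paper's split, \cref{eq:metric_distortion_technical_3} applied to $D_\gamma$ gives $1-\tfrac{L}{|\gamma|}\le(1-\tfrac3\pi)\tfrac{L^2}{\tau_\gamma^2}$. Combined with $\sqrt{|\gamma|^2-L^2}\le|\gamma|\sqrt{2(1-L/|\gamma|)}$, the total is $\bigl(1-\tfrac3\pi+\tfrac18\sqrt{\tfrac13-\tfrac1\pi}\bigr)\tfrac{L^2}{\cT_{M,f}^2}\approx0.0604\,\tfrac{L^2}{\cT_{M,f}^2}$. Even within that split, the sup bound $\tfrac12\sqrt{|\gamma|^2-L^2}$ would still overshoot $1/16$ slightly. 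Your $\cC^{1,1}$ fallback $\|\gamma(t)-\bar\gamma(t)\|\le t(1-t)|\gamma|^2/(2\tau_\gamma)$ does just fit.

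Finally, the pointwise inequality you quote for the trapezoid step is false. A tent with $g(a)=g(b)=0$ and peak $\lambda h/2$ at the midpoint violates it. The $\min$ version would integrate to $\lambda h^2/8$, below that tent's actual error $\lambda h^2/4$. The bound $\lambda h^2/4$ itself is correct and follows from $\int_a^b g-h\tfrac{g(a)+g(b)}{2}=-\int_a^b\bigl(s-\tfrac{a+b}{2}\bigr)g'(s)\,\mathrm{d}s$.
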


\Cref{prop:weight_distortion} shows that the conformal metric $D_{M,f}$ may be approximated locally by the weights $w_{f,q}$.
Note that for fixed resolution $q$ in the $r$-ball graph, the distortion between the weights and the conformal distances is linear in $r$. However, by choosing $q$ to be inversely proportional to $r$, the distortion becomes quadratic in $r$.
The proof for \cref{prop:weight_distortion} is provided in \cref{app:weight_props}.
We now introduce the polygonal approximation of the conformal metric as the metric of the weighted graph built on $X$.
\begin{definition}
\label{def:estimator_conform}
Consider a point cloud $X \subset \RR^\damb$, function $f : \RR^\damb\to\RR_+^*$ and parameters $r>0$ or $k\ge1$ and $q\in\{2,\dots,\infty\}$.
The polygonal metric associated with these parameters is defined between two points $x,y \in \RR^\damb$ as
\begin{equation}
    \label{eq:estimator_conform}
    \widehat D_{X,f}(x,y)
    \eqdef \min_{(x_0,\dots,x_K)} \sum_{k=0}^{K-1} w_{f,q}(x_k, x_{k+1})
\end{equation}
where the minimum is taken over the set of paths $(x_0,\dots,x_K)$ such that $x_0=x$ and $x_k=y$ in the graph $G$ that is chosen either as the $r$-ball graph $G_r(X\cup\{x,y\})$ or the $k$-NN graph $\cG_k(X\cup\{x,y\})$.
\end{definition}
The choice of a ball graph or a NN graph along with the threshold $r$ or $k$ and the resolution~$q$ of the weights are left implicit when writing $\widehat D_{X,f}$ to avoid heavy notations.
Note that when the endpoints $x$ and $y$ do not belong to the point cloud, the distance $\widehat D_{X,f}(x,y)$ is computed by adding them to the graph.
As a result, $\widehat D_{X,f}$ induces a distance over $X$ but not over~$M$. Indeed, when considering endpoints outside $X$, $\widehat D_{X,f}$ may not satisfy the triangular inequality due to the set of possible paths depending on the endpoints.

\subsection{Metric Approximation}
\label{sec:metric_approx}

To evaluate how efficient is the approximation of a metric, the error between the true distances and their estimation may be measured using either of the multiplicative loss functions
\begin{equation*}
    \ell_{\infty,M}\bLp D', D\bRp
    \eqdef \sup_{x\ne y\in M} \metricratio{D'(x,y)}{D(x,y)}
    \eqand
    l_{\infty,M}\bLp D'| D\bRp
    \eqdef \sup_{x\ne y\in M} \Lv 1-\frac{D'(x,y)}{D(x,y)} \Rv~.
\end{equation*}
In the case where $D'$ takes infinite values---which happens for instance when $D'$ is the metric of a non-connected graph---we let $\ell_{\infty,M}(D', D) = 1$ and $l_{\infty,M}(D'| D) = \pinfty$.
The inequality
\begin{equation}
    \label{eq:loss_comparison}
    \ell_{\infty,M}\bLp D', D\bRp
    \le l_{\infty,M}\bLp D'| D\bRp
    \le \frac{\ell_{\infty,M}\bLp D', D\bRp}{1-\ell_{\infty,M}\bLp D', D\bRp}
\end{equation}
holds in general, so that both losses are equivalent.
The loss $\ell_{\infty,M}$ is however easier to manipulate in some situations as it is symmetric and upper bounded by $1$.
Thanks to the regularity of geodesics stated by \cref{prop:geodesic_reach} and the local approximation of the conformal metric by the weights $w_{f,q}$ stated by \cref{prop:weight_distortion}, we are able to show that the conformal metric $D_{M,f}$ is approximated by the polygonal metric from \cref{def:estimator_conform} using the $r$-ball graph.
\begin{theorem}
\label{thm:geodesic_approx}
Let $X \subset M$ be a point cloud, $r>0$ and $q \in \{2,\dots,\infty\}$ two parameters.
Assume that $4\hausdorff(M,X) \le r \le \cT_{M,f}$.
Then the approximation $\widehat D_{X,f}$ defined in \cref{def:estimator_conform} using the $r$-ball graph with resolution $q$ satisfies
\begin{equation}
    \label{eq:thm:geodesic_approx}
    l_{\infty,M}\bLp\widehat D_{X,f}| D_{M,f}\bRp
    \le \frac{1}{32\cT_{M,f}}\frac{r}{q-1} + \frac{r^2}{8\cT_{M,f}^2} + 56\frac{\hausdorff(M,X)^2}{r^2}~.
\end{equation}
The first term in \cref{eq:thm:geodesic_approx} is to be interpreted as $0$ if $q=\infty$.
\end{theorem}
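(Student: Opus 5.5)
We treat the two sides of the relative error separately: an upper bound $\widehat D_{X,f}(x,y)\le (1+\varepsilon)D_{M,f}(x,y)$ from discretizing a geodesic, and a lower bound $\widehat D_{X,f}(x,y)\ge(1-\varepsilon)D_{M,f}(x,y)$ from lifting graph paths back to $M$. Throughout, write $\rho=\hausdorff(M,X)$ and $\tau=\cT_{M,f}\le\tau_{M,f}$ (by \cref{prop:geodesic_reach}).

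\textbf{Lower bound.} Take any path $(x_0,\dots,x_K)$ in $G_r(X\cup\{x,y\})$. Consecutive points lie in $M$ and satisfy $\|x_k-x_{k+1}\|\le r\le\tau$, so \cref{prop:weight_distortion} gives
\[
w_{f,q}(x_k,x_{k+1})\ge\bigl(1-\delta_q(r)\bigr)D_{M,f}(x_k,x_{k+1}).
\]
Summing and applying the triangle inequality for $D_{M,f}$ yields $\widehat D_{X,f}\ge(1-\delta_q(r))D_{M,f}$. Since $\delta_q(r)=\frac{\kappa r}{4\fmin(q-1)}+\frac{r^2}{16\tau^2}$ and $\kappa/\fmin\le 1/(8\tau)$, this is at most $\frac{r}{32\tau(q-1)}+\frac{r^2}{16\tau^2}$. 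This accounts for the first two terms of \cref{eq:thm:geodesic_approx}, and the lower bound uses no property of $\rho$.

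\textbf{Upper bound.} Fix an arc-length geodesic $\gamma\in\Gamma^\star_{M,f}(x,y)$, which has reach at least $\tau$ by \cref{prop:geodesic_reach}. Cut it at parameters $t_k$ spaced by a step $\delta$ somewhat smaller than $r$, chosen around $r/2$ so that, after perturbation by at most $\rho\le r/4$, consecutive points remain within distance $r$. Let $p_k=\gamma(t_k)$ and pick $x_k\in X$ with $\|x_k-p_k\|\le\rho$, keeping $x_0=x$ and $x_K=y$. Consecutive $x_k$ are then joined in $G_r$. It remains to compare $\sum_k w_{f,q}(x_k,x_{k+1})$ with $|\gamma|_f$, and we bound $w_{f,q}\le(1+\delta_q)\,w_{f,\infty}$-type quantities directly. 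The key decomposition is the following. Since $f$ is $\kappa$-Lipschitz, the weight of the perturbed segment differs from $f$-weighted Euclidean length by first-order terms. The length change $\|x_{k+1}-x_k\|-\|p_{k+1}-p_k\|$ is, to first order, $\langle u_k,e_{k+1}-e_k\rangle$ with $u_k=(p_{k+1}-p_k)/\|p_{k+1}-p_k\|$ and $e_k=x_k-p_k$, plus a second-order remainder $O(\rho^2/\delta)$. Summing by parts, the first-order terms telescope into $\sum_k\langle u_{k-1}-u_k,e_k\rangle$ plus boundary terms, which vanish since $e_0=e_K=0$. By \cref{eq:path_direction_lipschitz_2}, $\|u_k-u_{k-1}\|\le\delta/\tau$, so this sum is at most $K\rho\delta/\tau\approx|\gamma|\rho/\tau$. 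Using AM--GM, $\rho/\tau\le\frac{r^2}{c\tau^2}+c'\frac{\rho^2}{r^2}$. Meanwhile the second-order remainders total $K\rho^2/\delta\approx|\gamma|\rho^2/r^2$. The Euclidean length of the chords is at most $|\gamma|$, and by \cref{eq:path_direction_lipschitz_1} the $f$-weights along the chords stay close to those along $\gamma$. Finally, dividing by $D_{M,f}\ge\fmin|\gamma|$ and using $f\le\fmin+\kappa\cdot(\text{local scale})$, the error terms stay relative.

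\textbf{Main obstacle.} The hardest step is the upper bound: making the first-order cancellation rigorous with the variable factor $f$. In the telescoped sum, the terms carry weights $f(p_k)$ that vary with $k$. We plan to handle this by writing $f(p_k)=f(p_{k-1})+O(\kappa\delta)$, which adds a contribution $\kappa\rho|\gamma|$ that is again bounded via $\kappa\le\fmin/(8\tau)$ and AM--GM. A second difficulty is the short-geodesic case $|\gamma|<\delta$, where a single edge, or direct use of \cref{prop:weight_distortion}, suffices. We also need to track the constants to reach $1/8$ and $56$. We would then convert the two-sided bound into $l_{\infty,M}$ directly.
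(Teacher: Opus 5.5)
Your proposal is correct and follows essentially the same route as the paper. The lower bound comes from summing \cref{prop:weight_distortion} over graph edges of length at most $r$. The upper bound discretizes a geodesic at a step comparable to $r$, snaps the cut points to $X$ with $\epsilon_0=\epsilon_K=0$, and splits each edge weight into four parts: the chord weight, a Lipschitz endpoint-perturbation term of order $\kappa\rho$, a first-order term handled by summation by parts via \cref{lemma:path_direction_lipschitz} and the $\kappa$-variation of the averaged weights, and a second-order $\rho^2/r^2$ remainder; $\rho/\cT_{M,f}$ is then absorbed by AM--GM. Your phrase ``$f\le\fmin+\kappa\cdot(\textrm{local scale})$'' is false as written and should be replaced by the local comparison $w_{f,q}(p_k,p_{k+1})\le(1+\delta_q(r))D_{M,f}(p_k,p_{k+1})$ from \cref{prop:weight_distortion}, combined with $D_{M,f}(p_k,p_{k+1})\ge\fmin\|p_k-p_{k+1}\|$; this is exactly how the paper keeps the error terms relative.
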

Let us describe the reasoning behind \cref{thm:geodesic_approx}. Given $x\ne y\in X$, paths in $G_r(X)$ cannot create significant shortcuts outside $M$ as $r \le \tau_{M,f}$. This is illustrated by \cref{prop:weight_distortion} from which it can be deduced that
\begin{equation}
    \label{eq:thm_geodesic_approach_case_1}
    \widehat D_{X,f}(x,y)
    \ge \bLp1-\delta_q(r)\bRp D_{M,f}(x,y) 
\end{equation}
and, assuming that $\|x-y\| \le r$,
\begin{equation}
    \label{eq:thm_geodesic_approach_case_2}
    \widehat D_{X,f}(x,y)
    \le \bLp1+\delta_q(r)\bRp D_{M,f}(x,y)~.
\end{equation}
As for when $\|x-y\| > r$, consider a geodesic $\gamma \in \Gamma^\star_{M,f}(x,y)$. Decompose $\gamma$ into sections of length at most $r-2\hausdorff(M,X)$ and for each intermediate point select a point in $X$ at distance at most $\hausdorff(M,X)$. This process draws a polygonal path in $G_r(X\cup\{x,y\})$ as it uses edges of length at most $r$. Summing the approximation error from \cref{prop:weight_distortion} between each edge of this path and the corresponding section of $\gamma$ eventually yields the upper bound
\begin{equation}
    \label{eq:thm_geodesic_approach_case_3}
    \widehat D_{X,f}(x,y)
    \le \Lp 1 + \delta_q(r) + c_1\frac{\hausdorff(M,X)}{\cT_{M,f}} + c_2\frac{\hausdorff(M,X)^2}{r^2} \Rp D_{M,f}(x,y) 
\end{equation}
when $\|x-y\| > r$ and where $c_1$ and $c_2$ are universal constants.
The positive reach of $\gamma$ and \cref{lemma:path_direction_lipschitz} play a crucial role in obtaining terms of order $\hausdorff(M,X)/\cT_{M,f}$ and $\hausdorff(M,X)^2/r^2$ instead of $\hausdorff(M,X)/r$.
Together, \cref{eq:thm_geodesic_approach_case_1,eq:thm_geodesic_approach_case_2,eq:thm_geodesic_approach_case_3} eventually imply \cref{eq:thm:geodesic_approx}.
The details of this reasoning are provided in \cref{app:proof:geodesic_approx}.
The upper bound in \cref{eq:thm:geodesic_approx} is made proportional to $\hausdorff(M,X)$ by setting
\begin{equation*}
    r \asymp \sqrt{\cT_{M,f}~\hausdorff(M,X)}
    \eqand
    q \asymp \sqrt{\frac{\cT_{M,f}}{\hausdorff(M,X)}}~.
\end{equation*}

In the case of the induced metric with $f=1$, the first term in the right-hand side of \cref{eq:thm:geodesic_approx} disappears, and the error is of order $r^2/\tau_M^2 + \hausdorff(M,X)^2/r^2$ similarly to the one obtained in \cite{arias-castroPerturbationBoundsProcrustes2020}.
In the general conformal case, we keep this magnitude of error by setting the resolution $q$ to be at least of order $1/r$.
If it is not possible to evaluate $f$ outside the point cloud $X$, setting $q=2$ instead yields an error of order $r/\cT_{M,f} + \hausdorff(M,X)^2/r^2$ which is slightly worse. A similar upper bound was achieved in \cite{aaronConvergenceRatesEstimators2018} for the induced metric and the term linear in $r$ was due to the geodesically smooth assumption used in the paper being slightly weaker than a positive reach assumption. In particular, the latter allows for an efficient local estimation in the form of \cref{eq:thm_geodesic_approach_case_2}.
The main takeaway of \cref{thm:geodesic_approx} is that the crucial assumption to achieve an approximation error proportional to $\hausdorff(M,X)$ is the positive reach. In particular, the domain need not be~$\cC^2$, or even a submanifold.

\subsection{Unknown Conformal Factor}
\label{sec:unknown_f}
In general, $f$ may not be known and needs to be estimated from the data.
The polygonal metric~$\widehat D_{X,g}$ from \cref{def:estimator_conform} can be defined for any function $g:\RR^\damb\to\RR_+^*$ without the need of Lipschitz and lower bound assumptions.
Then, the approximation error between~$\widehat D_{X,g}$ and~$D_{M,f}$ is upper bounded by the sum of the error \ac{wrt} the conformal factor and the error \ac{wrt} the domain.
\begin{lemma}
\label{lemma:estimator_convergence_both_errors}
Let $f:\RR^\damb\to\RR_+$ a function lower bounded by $\fmin$, $X\subset M$ a point cloud and $g:\RR^\damb\to\RR_+^*$ such that $\|g-f\|_\infty \le \frac12\fmin$. Then
\begin{equation}
    \ell_{\infty,M}\bLp\widehat D_{X,g}, D_{M,f}\bRp
    \le \frac{2}{\fmin}\|g-f\|_\infty + 2\ell_{\infty,M}\bLp\widehat D_{X,f}, D_{M,f}\bRp~.
\end{equation}
This result holds regardless of the type of graph, threshold and resolution as long as they are shared between $\widehat D_{X,f}$ and $\widehat D_{X,g}$.
\end{lemma}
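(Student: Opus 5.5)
The plan is to compare $\widehat D_{X,g}$ with $\widehat D_{X,f}$ edge by edge, and then pass through $\widehat D_{X,f}$ to $D_{M,f}$ with a triangle-type inequality for the loss $\ell_{\infty,M}$.

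\textbf{Step 1: edgewise comparison of the weights.} Write $\varepsilon = \|g-f\|_\infty/\fmin \le \frac12$. Each weight $w_{g,q}(x,y)$ is a positive linear combination of values of $g$, with the same coefficients as $w_{f,q}(x,y)$ uses for $f$. Pointwise we have
\[
(1-\varepsilon) f \le g \le (1+\varepsilon) f ,
\]
since $|g-f| \le \varepsilon \fmin \le \varepsilon f$. Hence $(1-\varepsilon) w_{f,q} \le w_{g,q} \le (1+\varepsilon) w_{f,q}$ on every edge, including $q=\infty$ by integration.

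\textbf{Step 2: transfer to shortest paths.} Both estimators minimize over the same set of paths, because the graph, threshold and resolution are shared. Taking minima in the edgewise inequality gives
\[
(1-\varepsilon)\widehat D_{X,f} \le \widehat D_{X,g} \le (1+\varepsilon)\widehat D_{X,f} .
\]
If the graph is disconnected, both sides are infinite and the right-hand side of the lemma is at least $2 \ge 1$, so that case is trivial. Writing $a=\widehat D_{X,g}(x,y)$ and $b=\widehat D_{X,f}(x,y)$, we get $|a-b|/(a\vee b) \le |a-b|/b \le \varepsilon$. So $\ell_{\infty,M}(\widehat D_{X,g},\widehat D_{X,f}) \le \varepsilon$, which is at most $2\varepsilon$, the first term of the claim.

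\textbf{Step 3: combine with a weak triangle inequality.} This step is the main obstacle. The quantity $\rho(a,b)=|a-b|/(a\vee b) = 1-\frac{a\wedge b}{a\vee b}$ is not a metric, but $\log\frac{a\vee b}{a\wedge b} = -\log(1-\rho)$ is. I would prove directly that for $a,b,c>0$,
\[
\rho(a,c) \le \rho(a,b) + \rho(b,c) .
\]
Since $1-\rho(a,c) = \frac{a\wedge c}{a\vee c} \ge \frac{a\wedge b}{a\vee b}\cdot\frac{b\wedge c}{b\vee c}$ by multiplicativity of the log-ratio metric, we have $1-\rho(a,c) \ge (1-\rho(a,b))(1-\rho(b,c)) \ge 1-\rho(a,b)-\rho(b,c)$. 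Applying this with $a=\widehat D_{X,g}$, $b=\widehat D_{X,f}$ and $c=D_{M,f}$, then taking suprema over $x\ne y$, gives
\[
\ell_{\infty,M}\bLp\widehat D_{X,g}, D_{M,f}\bRp \le \varepsilon + \ell_{\infty,M}\bLp\widehat D_{X,f}, D_{M,f}\bRp .
\]
This is stronger than the stated bound. The factors $2$ in the statement leave room for a cruder route, such as passing through $l_{\infty,M}$ and using \cref{eq:loss_comparison} together with $\varepsilon\le\frac12$. The only care needed is to handle infinite values via the convention $\ell_{\infty,M}=1$.
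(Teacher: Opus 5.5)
Your proof is correct, and it takes a different route from the paper's.

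\textbf{The paper's route.} It works additively. It combines $|w_{g,q}(u,v)-w_{f,q}(u,v)|\le\|u-v\|\,\|g-f\|_\infty$ with the lower bounds $w_{g,q}(u,v)\ge\frac12\fmin\|u-v\|$ and $w_{f,q}(u,v)\ge\fmin\|u-v\|$. The first of these is where the hypothesis $\|g-f\|_\infty\le\frac12\fmin$ enters. This gives $\widehat D_{X,f}\le 2\widehat D_{X,g}$ and $|\widehat D_{X,g}-\widehat D_{X,f}|\le\frac{2}{\fmin}\|g-f\|_\infty\,\widehat D_{X,g}$. The paper then applies the ordinary triangle inequality to $|\widehat D_{X,g}-D_{M,f}|$ and renormalizes via $\widehat D_{X,f}\vee D_{M,f}\le 2(\widehat D_{X,g}\vee D_{M,f})$. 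Each of these steps costs a factor $2$.

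\textbf{Your route.} You normalize by $f$ alone, which yields the two-sided sandwich $(1-\varepsilon)\widehat D_{X,f}\le\widehat D_{X,g}\le(1+\varepsilon)\widehat D_{X,f}$. You then observe that the pointwise loss $|a-b|/(a\vee b)=1-\frac{a\wedge b}{a\vee b}$ itself satisfies the triangle inequality, since $\frac{a\vee b}{a\wedge b}=\exp|\log a-\log b|$ is submultiplicative.

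\textbf{What each buys.} Your route gives the sharper bound $\frac{1}{\fmin}\|g-f\|_\infty+\ell_{\infty,M}(\widehat D_{X,f},D_{M,f})$. It also never actually uses $\varepsilon\le\frac12$: the sandwich holds for every $\varepsilon$, its lower half being vacuous when $\varepsilon\ge1$. So the lemma's restriction on $\|g-f\|_\infty$ is unnecessary. The paper's route stays with the plain absolute value and its edgewise lemma on the dependence of the weights on $f$, at the price of the constants $2$ and of needing the lower bound $\frac12\fmin$ on $g$.

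Your treatment of disconnected graphs through the convention $\ell_{\infty,M}=1$ is fine. Since the graph is shared, $\widehat D_{X,g}$ and $\widehat D_{X,f}$ are infinite for exactly the same pairs.
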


The proof of \cref{lemma:estimator_convergence_both_errors} consists in straightforward computations and is deferred to \cref{app:proof:estimator_convergence_both_errors}.
As a consequence, in the context of estimation of the domain $M$ by a point cloud $X_n$ as in \cref{sec:random_samples} and of the conformal factor $f$ by a function $f_n : \RR^\damb \to \RR_+$, the rate of convergence of $\widehat D_{X_n,f_n}$ towards $D_{M,f}$ is the slowest rate of convergence among that of $\widehat D_{X_n,f}$ towards $D_{M,f}$ and that of $f_n$ towards $f$.
Recall that $f_n$ is not required to satisfy the Lipschitz and lower bounded assumptions for the polygonal metric $\widehat D_{X_n,f_n}$ to be defined and for \cref{lemma:estimator_convergence_both_errors} to hold. It may also be defined only over $X_n$ if the resolution is set to $q=2$.

\section{Estimation from Random Samples}
\label{sec:random_samples}

In this section we transpose \cref{thm:geodesic_approx} to a probabilistic setup where the point cloud $X$ is replaced with a random point cloud $X_n$ consisting of $n$ \iid samples from a probability measure~$\mu$ with support $M$.
The following assumptions on $\mu$ are needed to ensure that $X_n$ covers~$M$ efficiently as $n$ grows, allowing to deduce explicit convergence rates for the estimator $\widehat D_{X_n,f}$ from \cref{thm:geodesic_approx}.
\begin{definition}
\label{def:ahlfors}
Consider a probability measure $\mu$ with support $M\subset\RR^\damb$ and $d\ge2$.
\begin{itemize}
    \item The measure $\mu$ is $d$-standard with lower constant $c_\mu>0$ if for all $x\in M$ and $r>0$,
    \begin{equation*}
        \mu\bLp\oball(x,r)\bRp
        \ge c_\mu r^d \wedge 1
    \end{equation*}
    where $\oball(x,r)$ denotes the open ball centered at $x$ with radius $r$.
    \item The measure $\mu$ is $d$-Ahlfors with lower and upper constants $c_\mu>0$ and $C_\mu>0$ if for all $x\in M$ and $r>0$,
    \begin{equation*}
        c_\mu r^d \wedge 1
        \le \mu\bLp\oball(x,r)\bRp
        \le C_\mu r^d~.
    \end{equation*}
\end{itemize}
\end{definition}

If $\mu$ is $d$-standard with lower constant $c_\mu$, we also denote $L_\mu = c_\mu^{-1/d}$ which is related to the size of the support. The definition ensures that $\mu\bLp\oball(x,L_\mu)\bRp = 1$ for all $x\in M$, hence $L_\mu  \ge \diam(M)$. 
Moreover, $d$ acts as the intrinsic dimension of the support $M$ of~$\mu$. For instance, any measure with a density $\rho$ \ac{wrt} the volume measure of a submanifold of $\RR^\damb$ of dimension $d$ is $d$-Ahlfors if $\rho$ is bounded above and below.
The $d$-standard assumption ensures that the random point cloud $X_n$ converges to $M$ in Hausdorff distance. Ahlfors regularity is a stronger assumption and is needed to show the equivalence between ball graphs and NN graphs in \cref{prop:graph_ball_to_NN}.

\subsection{Convergence of the Ball Graph Estimator}
\label{sec:convergence_ball_graph}

We first discuss the case of a ball graph estimator.
It is known that if $\mu$ is $d$-standard, then $\hausdorff(M,X_n)$ is of order $L_\mu (\log(n)/n)^{1/d}$ at most, see \cref{app:proof:hausdorff_expectation}.
This convergence combined with \cref{thm:geodesic_approx} allows to derive convergence rates for the estimation of $D_{M,f}$.

\begin{theorem}
\label{thm:convergence_ball_graph}
Assume that $X_n$ is the result of $n$ \iid samples from a $d$-standard probability measure~$\mu$ with support $M$.
Consider the estimator $\widehat D_{X_n,f}$ from \cref{def:estimator_conform} using the $r$-ball graph with resolution $q$ where $r$ and $q$ are specified below.
Then there exists a constant $n_0$ depending on $L_\mu$, $\cT_{M,f}$ and $d$ such that for all $n\ge n_0$ the following holds.
\begin{itemize}
    \item If $r = 8\sqrt{L_\mu\cT_{M,f}} \Lp\frac{\log(n)}{n}\Rp^{\frac{1}{2d}}$ and $q\ge 1 + 4\frac{\cT_{M,f}}{r}$, then
        \begin{equation}
            \label{eq:thm:convergence_ball_graph_1}
            \EE\Lb \ell_{\infty,M}\bLp\widehat D_{X_n,f}, D_{M,f}\bRp \Rb
            \le 16\frac{L_\mu}{\cT_{M,f}} \Lp\frac{\log(n)}{n}\Rp^{\frac1d}~.
        \end{equation}
    \item If $r = 8L_\mu^{2/3}\cT_{M,f}^{1/3} \Lp\frac{\log(n)}{n}\Rp^{\frac{2}{3d}}$ and $q=2$, then
        \begin{equation}
            \label{eq:thm:convergence_ball_graph_2}
            \EE\Lb \ell_{\infty,M}\bLp\widehat D_{X_n,f}, D_{M,f}\bRp \Rb
            \le 8\Lp\frac{L_\mu}{\cT_{M,f}}\Rp^{\frac23} \Lp\frac{\log(n)}{n}\Rp^{\frac{2}{3d}}~.
        \end{equation}  
\end{itemize}
\end{theorem}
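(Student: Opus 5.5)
The approach is to split the expectation along a high-probability event on which the Hausdorff distance $\hausdorff(M,X_n)$ is small. On that event we apply \cref{thm:geodesic_approx} deterministically, and off it we use the trivial bound $\ell_{\infty,M}\le 1$. Write $\varepsilon_n = (\log(n)/n)^{1/d}$ and let $E_n$ be the event $\{\hausdorff(M,X_n)\le L_\mu \varepsilon_n\}$. This is the event whose probability is controlled by the covering argument referenced in \cref{app:proof:hausdorff_expectation}. The standard argument covers $M$ by $\lesssim (L_\mu/\eta)^d$ balls of radius $\eta/2$ (packing under the $d$-standard assumption). Each ball is missed with probability at most $(1-c_\mu(\eta/2)^d)^n$. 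This gives $\PP(\hausdorff(M,X_n)>\eta)\le C_d (L_\mu/\eta)^d\exp(-n c_\mu (\eta/2)^d)$. Choosing $\eta$ a suitable constant multiple of $L_\mu\varepsilon_n$ yields $\PP(E_n^c)\lesssim n^{-1}$ or smaller. I would absorb the constant into the definition of $E_n$ and take $E_n=\{\hausdorff(M,X_n)\le c_0 L_\mu\varepsilon_n\}$ with $c_0$ a small explicit constant (e.g.\ $c_0\le 2$), chosen so that $\PP(E_n^c)$ is $o(\varepsilon_n)$.

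On $E_n$ we check the hypotheses of \cref{thm:geodesic_approx}. In the first regime $r = 8\sqrt{L_\mu\cT_{M,f}}\,\varepsilon_n^{1/2}$. The condition $4\hausdorff\le r$ becomes $4c_0L_\mu\varepsilon_n\le 8\sqrt{L_\mu\cT_{M,f}}\varepsilon_n^{1/2}$, i.e.\ $\varepsilon_n\le 4\cT_{M,f}/(c_0^2L_\mu)$. The condition $r\le\cT_{M,f}$ becomes $\varepsilon_n\le \cT_{M,f}/(64L_\mu)$. Both hold for $n\ge n_0(L_\mu,\cT_{M,f},d)$. With $q-1\ge 4\cT_{M,f}/r$, the first term of \cref{eq:thm:geodesic_approx} is at most $r^2/(128\cT_{M,f}^2)$. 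Plugging in, the bound is a combination of $r^2/\cT_{M,f}^2 = 64 L_\mu\varepsilon_n/\cT_{M,f}$ and $56\hausdorff^2/r^2\le 56c_0^2 L_\mu\varepsilon_n/(64\cT_{M,f})$. This is of the form $C\,L_\mu\varepsilon_n/\cT_{M,f}$, and by \cref{eq:loss_comparison} $\ell_{\infty,M}\le l_{\infty,M}$.

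In the second regime ($q=2$), the bound reads $\frac{r}{32\cT} + \frac{r^2}{8\cT^2} + 56\frac{\hausdorff^2}{r^2}$. The choice $r = 8L_\mu^{2/3}\cT^{1/3}\varepsilon_n^{2/3}$ balances $r/\cT$ against $\hausdorff^2/r^2\le c_0^2L_\mu^2\varepsilon_n^2/r^2$, both becoming multiples of $(L_\mu/\cT)^{2/3}\varepsilon_n^{2/3}$. The term $r^2/\cT^2$ is of higher order $(\varepsilon_n)^{4/3}$ and is absorbed for $n\ge n_0$. The hypotheses $4\hausdorff\le r\le\cT$ again reduce to $\varepsilon_n$ being small relative to $\cT/L_\mu$.

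Finally, $\EE[\ell]\le \EE[\ell\,\mathbf 1_{E_n}] + \PP(E_n^c)$. Making $\PP(E_n^c)$ smaller than the main term (it decays like a power of $n$ faster than $\varepsilon_n$ once the constant in $E_n$ is taken large enough relative to the exponent) is the only place where $n_0$ must also depend on $d$. The main obstacle I expect is bookkeeping the explicit constants 16 and 8. Getting them requires a careful choice of the constant in $E_n$: it must be large enough that the tail probability is negligible, yet small enough that $56\hausdorff^2/r^2$ fits. I would first try the constant $1$ or $2$ with the tail bound $\PP(\hausdorff>2L_\mu\varepsilon_n)\le 4^d n^{-(2^d/2^d)}\cdots$ from the appendix, then verify numerically that the sum of coefficients stays below 16 (resp.\ 8), enlarging $n_0$ to absorb lower-order and tail terms.
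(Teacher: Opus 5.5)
Your skeleton is the same as the paper's: restrict to a good Hausdorff event, apply \cref{thm:geodesic_approx} there, bound $\ell_{\infty,M}\le 1$ on the complement, and control the complement with \cref{lemma:hausdorff_proba_bound}. As written, however, the step that makes the complement negligible fails. Take $\eta=c_0L_\mu\varepsilon_n$ and use $c_\mu L_\mu^d=1$. The lemma then gives
\[
\PP\bigl(\hausdorff(M,X_n)>c_0L_\mu\varepsilon_n\bigr)\le \frac{4^d}{c_0^d}\,\frac{n}{\log n}\,n^{-(c_0/2)^d}.
\]
The prefactor $4^d/(c_\mu\eta^d)$ is of order $n/\log n$, and you dropped it. For $c_0=2$ the bound is $2^d/\log n$, not $O(n^{-1})$. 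For $c_0<2$ the available bound does not even tend to $0$. So with your proposed $c_0\le 2$, the term $\PP(E_n^c)$ dominates the main term in both regimes, and neither \cref{eq:thm:convergence_ball_graph_1} nor \cref{eq:thm:convergence_ball_graph_2} follows.

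In your route the two requirements on $c_0$ genuinely pull against each other:
\begin{itemize}
\item the tail is $o(\varepsilon_n)$ only if $(c_0/2)^d\ge 1+1/d$;
\item the constant $16$ requires $\frac12+8+\frac78c_0^2<16$, i.e.\ $c_0^2<60/7$;
\item the constant $8$ requires $\frac14+\frac78c_0^2<8$.
\end{itemize}
The window is nonempty for every $d\ge 2$, but it is narrow at $d=2$ (roughly $2.45<c_0<2.93$). The choice $c_0=2\sqrt2$ works for all $d\ge2$. The tail is then at most $2^{d/2}/(n\log n)$, and the leading coefficients become $15.5$ and $7.25$. This leaves room to absorb the tail, and in the second regime the $O(\varepsilon_n^{4/3})$ term, for $n\ge n_0(L_\mu,\cT_{M,f},d)$. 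With that repair your argument goes through.

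The paper avoids this balancing altogether in two ways:
\begin{itemize}
\item It splits on $\{4\hausdorff(M,X_n)\le r\}$, which is exactly the hypothesis of \cref{thm:geodesic_approx}. Its complement is super-polynomially small because $r/4\gg\varepsilon_n$ in both regimes.
\item It keeps $\hausdorff(M,X_n)^2$ random inside the expectation and bounds $\EE\bigl[\hausdorff(M,X_n)^2\bigr]\le 8L_\mu^2\varepsilon_n^2$ via \cref{corol:hausdorff_expectation}.
\end{itemize}
This is precisely the effective value $c_0^2=8$ above. It is where the paper's constant $448=56\cdot 8$ and its coefficients $\frac12+8+7$ and $\frac14+7$ come from.
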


The first case in \cref{thm:convergence_ball_graph} includes $q=\infty$. There is no use setting $q$ to be greater than the indicated threshold of order $\cT_{M,f} / r$, as the term in the upper bound depending on $q$ becomes negligible past this threshold.
Note that the optimal choice for $r$ and~$q$ requires the knowledge of $L_\mu$, $\cT_{M,f}$ and most importantly $d$. \cref{thm:convergence_ball_graph} is stated with these choices of parameters to highlight the optimal theoretical dependency in these constants. 
For practical purposes, the knowledge of $L_\mu$ and $\cT_{M,f}$ is not actually needed as, in general, any choice of $r \asymp (\log(n)/n)^{1/2d}$ and $q \gtrsim 1/r$ yields a convergence rate of $(\log(n)/n)^{1/d}$.
Likewise, any choice of $r \asymp (\log(n)/n)^{2/3d}$ and $q = 2$ yields a convergence rate of $(\log(n)/n)^{2/3d}$.
It is not possible however to get rid of the dependency in $d$ for the range parameter $r$ without altering the convergence rate.
To circumvent this issue, one may use the NN graph instead, for which the optimal choice of the parameter is shown in \cref{sec:convergence_NN_graph} to be $k = \sqrt{n}$ which does not depend on $d$.
\Cref{thm:convergence_ball_graph} is derived directly from \cref{thm:geodesic_approx} and the convergence of $X_n$ to $M$ in Hausdorff distance. The precise computations are deferred to \cref{app:proof::convergence_ball_graph}.
In the case of the induced metric, the weight function $w_{f,q}$ becomes the Euclidean distance regardless of the resolution and $\cT_{M,f}$ is replaced with $\tau_M$.
\begin{corollary}
\label{corol:induced_metric_approx}
Under the same assumptions as in \cref{thm:convergence_ball_graph} and when $f=1$, there exists a constant $n_0$ depending on $L_\mu$, $\tau_M$ and $d$ such that for all $n\ge n_0$, setting ${r = 8\sqrt{L_\mu\tau_M} (\log(n)/n)^{1/2d}}$, the estimator $\widehat D_{X_n,1}$ satisfies
\begin{equation*}
    \EE\Lb \ell_{\infty,M}\bLp\widehat D_{X_n,1}, D_M\bRp \Rb
    \le 16\frac{L_\mu}{\tau_M} \Lp\frac{\log(n)}{n}\Rp^{\frac{1}{d}}~.
\end{equation*}
\end{corollary}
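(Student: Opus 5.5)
The corollary should follow by specializing the first case of \cref{thm:convergence_ball_graph} to $f=1$. So the plan is to check the parameters that the theorem uses in this case and then substitute them into that bound.

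\emph{Step 1: the parameters.} A constant function has Lipschitz constant $\kappa=0$ and lower bound $\fmin=1$. Hence $\fmin/(8\kappa)=\pinfty$ and $\cT_{M,1}=\tau_M/2$. This is consistent with $\tau_{M,1}=\tau_M$, and it is stated in the paper as replacing $\cT_{M,f}$ by $\tau_M$. To avoid dividing by zero, I would not plug $\kappa=0$ formally into \cref{prop:geodesic_reach}. Instead I would observe directly that $D_{M,1}=D_M$ and that $\tau_{M,1}=\tau_M$ by \cref{lemma:metric_distortion_reach_characterization}. Then I would rerun the chain (\cref{prop:weight_distortion}, \cref{thm:geodesic_approx}, \cref{thm:convergence_ball_graph}) with $\cT$ replaced by the true reach $\tau_M$. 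The only place where the factor $1/2$ in $\cT_{M,f}$ was used is the absorption of the $f$-dependent geodesic regularity, and that step disappears when $f=1$.

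\emph{Step 2: the weights.} For every $q$ we have $w_{1,q}(x,y)=\|x-y\|=w_{1,\infty}(x,y)$. The resolution is therefore irrelevant, and the first term in \cref{eq:delta_q} and in \cref{eq:thm:geodesic_approx} vanishes. Equivalently, the condition $q\ge 1+4\cT/r$ can be taken to hold trivially, since $q=\infty$ gives the same estimator.

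\emph{Step 3: the rate.} I would apply the first case of \cref{thm:convergence_ball_graph} with $\cT$ replaced by $\tau_M$. The radius becomes $r=8\sqrt{L_\mu\tau_M}(\log(n)/n)^{1/2d}$, exactly as in the statement. The bound becomes $16\frac{L_\mu}{\tau_M}(\log(n)/n)^{1/d}$, with $n_0$ depending on $L_\mu,\tau_M,d$.

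The main obstacle is justifying that $\tau_M$ can be used in place of $\cT_{M,1}=\tau_M/2$ without changing the constants. If this cannot be done cleanly, the fallback is to apply the theorem verbatim with $\cT=\tau_M/2$. That gives the same rate, but with slightly different constants and radius.
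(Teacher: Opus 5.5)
Your proposal is correct and follows the paper's route. The paper obtains the corollary by specializing the first case of \cref{thm:convergence_ball_graph}, using that $w_{1,q}(x,y)=\|x-y\|$ for every $q$ and replacing $\cT_{M,f}$ by $\tau_M$. Your point that this replacement, rather than the formal value $\cT_{M,1}=\tau_M/2$, requires rerunning the chain is well taken, and the rerun does go through: with $\kappa=0$ every $\kappa/\fmin$ term vanishes, every $D_M$-geodesic has reach at least $\tau_M$, and $1-3/\pi\le 1/16$, so \cref{prop:weight_distortion} and \cref{thm:geodesic_approx} hold verbatim with $\tau_M$ in place of $\cT_{M,f}$, which yields the constant $\frac12+8+7\le 16$.
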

Recall that in the case of a submanifold of class $\cC^k$ with $k\ge2$ and dimension $d$, the optimal convergence rate for the estimation of the induced metric is $n^{-k/d}$ \cite{aamariOptimalReachEstimation2023}.
\Cref{corol:induced_metric_approx} extends the upper bound to any set of positive reach with the convergence rate $n^{-1/d}$.

\subsection{Convergence of the Nearest-Neighbors Graph Estimator}
\label{sec:convergence_NN_graph}

We now treat the case of a NN graph estimator, by observing that for adequate choice of $r$ and $k$, the $r$-ball graph and $k$-NN graph are similar.
Indeed, assuming that $\mu$ is $d$-Ahlfors regular, a ball of radius $r$ is expected to contain between $c_\mu n r^d$ and $C_\mu n r^d$ points from $X_n$.
\begin{proposition}
\label{prop:graph_ball_to_NN}
Let $\mu$ be a $d$-Ahlfors measure as defined in \cref{def:ahlfors} and $X_n$ a point cloud sampled \iid from $\mu$. Let $k\ge1$, $\epsilon\in(0,1)$ and define
\begin{equation*}
    r_- = \Lp(1-\epsilon)\frac{k}{C_\mu(n-1)}\Rp^{\frac1d}
    \eqand
    r_+ = \Lp\frac{1}{1-\epsilon}\frac{k}{c_\mu(n-1)}\Rp^{\frac1d}~.
\end{equation*}
Then with probability at least $1 - 2n e^{-\epsilon^2k / 2}$, the $k$-NN graph $\cG_k(X_n)$ is enclosed between two ball graphs $G_{r_-}(X_n)$ and $G_{r_+}(X_n)$, that is
\begin{equation*}
    \PP\Lp G_{r_-}(X_n)
    \subset \cG_k(X_n)
    \subset G_{r_+}(X_n) \Rp
    \ge 1 - 2n \exp\Lp-\frac{\epsilon^2}{2}k\Rp
\end{equation*}
where the inclusion $\subset$ refers to the inclusion of edge sets.
\end{proposition}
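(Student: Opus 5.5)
The plan is to reduce both inclusions to a statement about the $k$-NN radius of each sample point, and then control that radius with a Chernoff bound conditional on the point. For each $i\in\{1,\dots,n\}$, let $R_i$ be the distance from $X_i$ to its $k$-th nearest neighbour in $X_n\setminus\{X_i\}$. By \cref{def:graphs}, $X_i$ and $X_j$ are adjacent in $\cG_k(X_n)$ if and only if $\|X_i-X_j\|\le R_i$ or $\|X_i-X_j\|\le R_j$, up to ties, which occur with probability zero or can be handled by the convention used for ranking. Hence it suffices to show that, with the stated probability, $r_- \le R_i \le r_+$ for every $i$, with a strict inequality on the left.

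\textbf{Inclusions.} Suppose $R_i\le r_+$ for all $i$. Any $k$-NN edge $\{X_i,X_j\}$ satisfies $\|X_i-X_j\|\le \max(R_i,R_j)\le r_+$, so the edge lies in $G_{r_+}(X_n)$. Suppose instead $R_i>r_-$ for all $i$. Then any ball-graph edge with $\|X_i-X_j\|\le r_-<R_i$ makes $X_j$ one of the $k$ nearest neighbours of $X_i$, so the edge lies in $\cG_k(X_n)$. A union bound over the $n$ points and the two events then accounts for the factor $2n$.

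\textbf{Binomial counts.} Fix $i$ and condition on $X_i=x\in M$. The number $N$ of the other $n-1$ points falling in $\oball(x,r)$ or in the closed ball $\cball(x,r)$ is $\mathrm{Bin}(n-1,p)$, where $p$ is the $\mu$-measure of that ball.
\begin{itemize}
\item For the event $R_i\le r_-$, note that it requires at least $k$ points in the closed ball of radius $r_-$. The upper Ahlfors bound gives $\mu(\cball(x,r_-))\le C_\mu r_-^d$, obtained from open balls of slightly larger radius and continuity. Hence $\EE N\le (1-\epsilon)k$. The multiplicative Chernoff bound $\PP(N\ge (1+\delta)m)\le e^{-\delta^2 m/(2+\delta)}$ then applies, after monotonically enlarging the mean to $(1-\epsilon)k$.
\item For the event $R_i>r_+$, note that it forces fewer than $k$ points in $\oball(x,r_+)$. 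The lower bound gives $p\ge c_\mu r_+^d\wedge 1 = \frac{k}{(1-\epsilon)(n-1)}\wedge 1$, so $\EE N\ge k/(1-\epsilon)$, or else $N=n-1\ge k$ surely. The lower-tail Chernoff bound $\PP(N\le(1-\epsilon)m)\le e^{-\epsilon^2 m/2}$ with $m=k/(1-\epsilon)$ then gives at most $e^{-\epsilon^2 k/(2(1-\epsilon))}\le e^{-\epsilon^2k/2}$.
\end{itemize}

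\textbf{Main obstacle.} The delicate step is the upper tail, where we need a bound of exactly $e^{-\epsilon^2 k/2}$. With $m=(1-\epsilon)k$ and threshold $k=(1+\delta)m$, we have $\delta=\epsilon/(1-\epsilon)$. The exponent is $\delta^2 m/(2+\delta)=\epsilon^2k/\bigl((1-\epsilon)(2-\epsilon)\bigr)$. Since $(1-\epsilon)(2-\epsilon)\le 2$ for $\epsilon\in(0,1)$, this exponent is at least $\epsilon^2k/2$, as required. To keep this airtight, I would use the KL-form Chernoff bound, which is monotone in $p$, to justify replacing the true mean by its upper bound. Integrating the conditional bounds over $x$ and summing the $2n$ failure probabilities concludes the proof.
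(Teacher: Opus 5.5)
Your proposal is correct and follows essentially the same route as the paper: condition on $X_i$, bound the binomial count of the other sample points in the ball of radius $r_-$ (upper tail, using the upper Ahlfors constant) and of radius $r_+$ (lower tail, using the lower constant) by Chernoff bounds, then take a union bound over the $2n$ per-point events; the paper simply applies the KL-form Chernoff bound $\bigl(np/k\bigr)^k e^{k-np}$ directly. One harmless algebra slip: with $m=(1-\epsilon)k$ and $\delta=\epsilon/(1-\epsilon)$ one gets $\delta^2 m/(2+\delta)=\epsilon^2k/(2-\epsilon)$, not $\epsilon^2k/\bigl((1-\epsilon)(2-\epsilon)\bigr)$, but this is still at least $\epsilon^2k/2$, so your conclusion stands.
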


\Cref{prop:graph_ball_to_NN} shows that if $r$ and $k$ are chosen such that $k \asymp nr^d$, the $k$-NN graph and the $r$-ball graph are very similar with high probability.
This result stems from a common intuition and a detailed proof is provided in \cref{app:graph_ball_to_NN} for completeness.
\Cref{thm:convergence_ball_graph} is then extended to $k$-NN graphs by choosing $k \asymp \sqrt{n\log(n)}$, which does not depend on $d$.
\begin{theorem}
\label{thm:convergence_NN_graph}
Assume that $X_n$ is the result of $n$ \iid samples from a $d$-Ahlfors probability measure~$\mu$ with support $M$.
Consider the estimator $\widehat D_{X_n,f}$ from \cref{def:estimator_conform} using the $k$-NN graph with parameters $k=\big\lceil\sqrt{n\log(n)}~\big\rceil$ and $q=\lceil n^{1/4}\rceil$.
Then there exists a constant $n_0$ depending on $L_\mu$, $\cT_{M,f}$ and $d$ such that for all $n\ge n_0$ the following holds.
\begin{equation*}
    \EE\Lb \ell_{\infty,X_n}\bLp\widehat D_{X_n,f}, D_{M,f}\bRp \Rb
    \le C \Lp\frac{\log(n)}{n}\Rp^{\frac{1}{d}}
\end{equation*}
where $C$ is a constant depending on $L_\mu$ and $\cT_{M,f}$.
\end{theorem}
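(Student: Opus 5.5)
The plan is to reduce the statement to \cref{thm:geodesic_approx} by sandwiching the $k$-NN graph between two ball graphs with \cref{prop:graph_ball_to_NN}. Since the loss is only evaluated on $X_n$, the endpoints are already in the graph, so no extra vertices are added. Let $E$ be the event that $G_{r_-}(X_n)\subset\cG_k(X_n)\subset G_{r_+}(X_n)$, with $\epsilon=1/2$ and $k=\lceil\sqrt{n\log n}\rceil$. By \cref{prop:graph_ball_to_NN},
\[
\PP(E^c)\le 2n\exp\bigl(-\sqrt{n\log n}/8\bigr),
\]
which is $o(n^{-1/d})$, indeed smaller than any power of $n$. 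On $E^c$ we bound the loss by $1$, since $\ell_{\infty}$ is always at most $1$. It therefore suffices to bound the loss on $E$. Note that $r_\pm\asymp (k/n)^{1/d}\asymp(\log n/n)^{1/(2d)}$, with constants depending on $c_\mu,C_\mu,d$. These are exactly the scales appearing in the first case of \cref{thm:convergence_ball_graph}.

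On $E$, edge inclusion gives a monotonicity of shortest-path metrics. The $k$-NN metric is at most the $r_-$-ball metric, because $\cG_k$ has more edges. It is at least the $r_+$-ball metric, because $\cG_k$ has fewer edges and the weights $w_{f,q}$ are the same. The upper bound then comes from \cref{thm:geodesic_approx} applied with $r=r_-$. This requires $4\hausdorff(M,X_n)\le r_-\le\cT_{M,f}$. The right inequality holds for $n\ge n_0$. The left one holds on the event $H$ that $\hausdorff(M,X_n)\le C L_\mu(\log n/n)^{1/d}$, which is much smaller than $r_-$ for large $n$. By the Hausdorff concentration result of \cref{app:proof:hausdorff_expectation}, $H^c$ has probability at most polynomially small, which we choose to be $O(n^{-1/d})$.

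The lower bound needs care, because \cref{thm:geodesic_approx} with $r=r_+$ also needs $4\hausdorff\le r_+\le\cT_{M,f}$. Both conditions hold on $H$ since $r_+\ge r_-$. I will, however, really only use the one-sided inequality \cref{eq:thm_geodesic_approach_case_1}, $\widehat D\ge(1-\delta_q(r_+))D_{M,f}$, which needs only $r_+\le\cT_{M,f}$. Combining both sides on $E\cap H$ gives
\[
l_{\infty,X_n}\le \frac{r_+}{32\cT_{M,f}(q-1)}+\frac{r_+^2}{8\cT_{M,f}^2}+56\frac{\hausdorff(M,X_n)^2}{r_-^2}.
\]
With $q-1\ge n^{1/4}-1$, the first term is of order $n^{-1/4-1/(2d)}$. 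Since $d\ge2$, this is at most $n^{-1/d}$ up to constants; this is exactly why $q=\lceil n^{1/4}\rceil$ suffices. The second term is of order $(\log n/n)^{1/d}$. The third term is of order $L_\mu^2(\log n/n)^{2/d}/(\log n/n)^{1/d}=L_\mu^2(\log n/n)^{1/d}$, up to constants involving $c_\mu,C_\mu$. One then converts $l$ into $\ell$ via \cref{eq:loss_comparison}, using $\ell\le l$.

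The main obstacle is bookkeeping rather than conceptual. First, one must check that the ratio $r_+/r_-=\bigl(C_\mu/((1-\epsilon)^2c_\mu)\bigr)^{1/d}$ is a bounded constant. This ensures the mismatch between the two ball scales costs only constants. Second, one must make the expectation bound rigorous by splitting it as
\[
\EE[\ell]\le \EE[\ell\,\mathbf 1_{E\cap H}]+\PP(E^c)+\PP(H^c),
\]
with $n_0$ chosen so that $r_+\le\cT_{M,f}$ and the tail probabilities are at most $(\log n/n)^{1/d}$. The resulting constant $C$ depends on $L_\mu,\cT_{M,f}$, and also on the Ahlfors constants through $r_\pm$, which I would absorb as the statement allows.
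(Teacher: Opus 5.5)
Your proof is correct. It shares the paper's first step: the sandwich $G_{r_-}(X_n)\subset\cG_k(X_n)\subset G_{r_+}(X_n)$ from \cref{prop:graph_ball_to_NN} with $\epsilon=1/2$, and the resulting monotonicity $\widehat D_{r_+}\le\widehat D_k\le\widehat D_{r_-}$ on $X_n$. After that the two arguments diverge.

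\textbf{The paper's route.} It bounds $l_{\infty,X_n}(\widehat D_k|D)$ by the maximum of the two ball-graph losses. It then introduces an auxiliary event $A_2$ on which both ball-graph losses $\ell$ are at most $\frac12$, so that \cref{eq:loss_comparison} gives $l\le 2\ell$. It controls $\PP(\overline{A_2})$ by Markov's inequality. Finally it invokes \cref{thm:convergence_ball_graph} as a black box for the expected losses at radii $r_-$ and $r_+$.

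\textbf{Your route.} You instead stay deterministic on $E\cap H$. You use only the two one-sided inequalities underlying \cref{thm:geodesic_approx}. The first is the lower bound $\widehat D\ge(1-\delta_q(r_+))D_{M,f}$ at $r_+$, which needs only $r_+\le\cT_{M,f}$. The second is the upper bound at $r_-$, which needs $4\hausdorff(M,X_n)\le r_-$. You then pay $\PP(E^c)+\PP(H^c)$, the latter via \cref{lemma:hausdorff_proba_bound}.

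\textbf{What each buys.} The paper's route is shorter and more modular. However, it tacitly applies \cref{thm:convergence_ball_graph} at radii $r_\pm$ that are not the specific $r$ fixed in that theorem's statement. This is justified only by the informal remark that any $r\asymp(\log(n)/n)^{1/2d}$ works. It also needs the Markov detour to pass from $\ell$ back to $l$. Your version avoids both issues and needs only one-sided control of each ball graph. It also makes explicit that $C$ and $n_0$ depend on $C_\mu$ (and $d$) through $r_-$, for instance via the term $\hausdorff(M,X_n)^2/r_-^2$. The paper's proof carries the same dependence, hidden in the phrase $r_-\asymp r_+$.
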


Notice that in \cref{thm:convergence_NN_graph} the loss is over $X_n$ instead of $M$ like in \cref{thm:convergence_ball_graph}. The  same result could be stated over $M$, although the proof would be more tedious and is therefore omitted here.
Recall that no knowledge on either $d$, $c_\mu$, $C_\mu$, $\tau_M$, $\kappa$ or $\fmin$ is necessary to compute the estimator, so that it can be used in practice.
Setting the resolution $q$ to be $(n/\log(n))^{1/2d}$ would be sufficient to achieve the same upper bound, however this choice requires the knowledge of $d$.
On the other hand, when setting the resolution to $q=2$ the optimal choice of $k$ can be shown similarly to be $k = n^{1/3}\log(n)^{2/3}$ and to yield a convergence rate of order $(\log(n)/n)^{2/3d}$.
\Cref{thm:convergence_NN_graph} is a direct consequence of \cref{thm:convergence_ball_graph,prop:graph_ball_to_NN} and the precise computations are provided in \cref{app:proof:convergence_NN_graph}.
Regarding the case of the induced metric, the same statement as in \cref{corol:induced_metric_approx} holds for NN graphs. Namely, the estimator $\widehat D_{X_n,1}$ defined over the $k$-NN graph with $k=\big\lceil\sqrt{n\log(n)}~\big\rceil$ converges to $D_M$ at rate $(\log(n)/n)^{1/d}$.

Let us now discuss the algorithmic complexity of the $k$-NN estimator with resolution $q$.
In general, building the $k$-NN graph over a point cloud $X$ of size $n$ is done in $\cO(n^2\damb)$ time when the ambient dimension $\damb$ is large.
Now, consider two points $x,y\in\RR^\damb$ and assume that $f$ can be evaluated at any point with cost $c_f$. Computing the edges of the graph $\cG_k(X\cup\{x,y\})$ takes $\cO(nkqc_f)$ time as the amount of edges in the graph is $\cO(nk)$ and the weight of each edge uses $q$ evaluations of $f$.
Finally, computing the infimum that defines $D_{X,f}(x,y)$ in \cref{def:estimator_conform} using Dijkstra's algorithm takes $\cO(n\log(n) + nk)$ time.
Overall, if $k=\big\lceil\sqrt{n\log(n)}~\big\rceil$ and $q=\lceil n^{1/4}\rceil$, the time complexity is $\cO(n^2\damb + n^{7/4}\log(n)^{1/2} c_f)$.

\subsection{Minimax lower bound}
\label{sec:minimax_lower_bound}

We now study the worst case performance of any estimator of the induced metric $D_M$.
\begin{theorem}
\label{thm:minimax_lower_bound}
Denote $\mathfrak D_n$ the set of all estimators $\widehat D$ of the induced metric based on $n$ samples, that given any point cloud $X$ of $n$ points in $\RR^\damb$ provides a function $\widehat D_X : \RR^\damb\times\RR^\damb \to \RR_+$.
Let $2\le d\le\damb$, $L,\tau > 0$ and $\cM(d,L,\tau)$ be the set of all $d$-standard measures $\mu$ with $L_\mu\le L$ and support $M_\mu \subset \RR^\damb$ that has positive reach lower bounded by~$\tau$.
Then there exists two constants $C>0$ and $n_0$ depending on $d$, $L$ and $\tau$ such that for all $n \ge n_0$,
\begin{equation}
    \label{eq:thm:minimax_lower_bound}
    \inf_{\hat D \in \mathfrak D_n} \sup_{\mu\in\cM(d,L,\tau)} \EE_{X \sim \mu^{\otimes n}}\Lb \ell_{\infty,M_\mu}\bLp\widehat D_X, D_{M_\mu}\bRp \Rb
    \ge C \Lp\frac1n\Rp^{\frac{1}{d-1/2}}~.
\end{equation}
\end{theorem}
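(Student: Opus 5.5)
Our approach is Le Cam's two-point method. We construct two measures $\mu_0,\mu_1\in\cM(d,L,\tau)$ whose supports $M_0,M_1$ differ only inside a small region $U$ of small $\mu$-mass $p$, and such that $\mu_0,\mu_1$ coincide outside $U$. There should be a fixed pair of points $x,y$, lying in both supports, with $D_{M_1}(x,y)/D_{M_0}(x,y)\ge 1+c\,\epsilon$. Since $\ell_{\infty,M}$ is a supremum, it dominates the relative error at $(x,y)$. Le Cam's lemma then gives
\[
\inf_{\widehat D}\max_{i}\EE_{\mu_i^{\otimes n}}\big[\ell_{\infty,M_i}\big]\gtrsim \epsilon\,(1-\mathrm{TV}(\mu_0^{\otimes n},\mu_1^{\otimes n}))\ge \epsilon\,(1-p)^n .
\]
The last inequality holds because, with probability $(1-p)^n$, no sample falls in $U$, and on that event the two sample distributions coincide. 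Choosing $p\asymp 1/n$ then yields a lower bound of order $\epsilon$. The whole task is to maximize the metric perturbation $\epsilon$ subject to mass $p\asymp 1/n$ and the positive-reach constraint.

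For the construction, take $M_0$ to be a flat $d$-dimensional piece, say a $d$-dimensional slab or a rounded convex body of reach at least $\tau$ and diameter $\lesssim L$, carrying the normalized uniform measure. We then choose $x,y$ at distance $\asymp 1$ so that the geodesic between them is forced to pass through a "neck" region. Concretely, we remove from $M_0$ (or add to it) a thin region around a $(d-1)$-dimensional piece near the segment $[x,y]$. One natural option is a thin wall of width $h$ crossed by a channel. Another is to make $M_0$ contain a bump of height $h$ over a base of radius $\rho$, which $M_1$ flattens. Because of the reach constraint, a bump of height $h$ needs horizontal extent $\rho\gtrsim\sqrt{\tau h}$. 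A path going over it changes length by a relative amount of order $h^2/\rho^2\cdot(\rho/|x-y|)$, or by order $h$ if the geodesic is forced to detour. We plan to set up the geometry so that the detour gain is linear in $h$, i.e.\ $\epsilon\asymp h$. The mass of the modified region is $p\asymp h\cdot\rho^{d-1}$, or $h\,\rho^{d-2}$ for a ridge of codimension one in the $d$-dimensional support. Using $\rho\asymp\sqrt{\tau h}$ in the full-dimensional version gives $p\asymp h^{1+(d-1)/2}$, which does not match. The exponent $1/(d-1/2)$ instead suggests $p\asymp h^{d-1/2}$, i.e.\ $\epsilon^{d-1/2}\asymp 1/n$. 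This comes from a modification whose extent is $h$ in $d-1$ directions and $\sqrt{\tau h}$ in the remaining direction, which is exactly a reach-constrained rounded cap of width $h$. So the concrete plan is as follows. $M_1$ differs from $M_0$ by a notch or bridge of cross-section $h^{d-1}$ along a length $\sqrt{\tau h}$, placed so that the geodesic from $x$ to $y$ in one of the sets must deviate by an amount of order $h$ relative to the other.

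The steps, in order, are as follows. (i) Fix the base domain and verify its reach is at least $\tau$ and that its uniform measure is $d$-standard with $L_\mu\le L$. The lower mass bound $c r^d$ survives small perturbations since the pieces removed or added are thin but rounded at scale $\ge\tau$. (ii) Define the perturbation with parameter $h$ and check that the reach stays $\ge\tau$, using \cref{lemma:metric_distortion_reach_characterization} or direct tubular-neighborhood arguments. (iii) Compute $D_{M_1}(x,y)-D_{M_0}(x,y)\gtrsim h$ and $\mu_0(U)+\mu_1(U)\lesssim h^{d-1/2}$. (iv) Apply Le Cam with $h\asymp n^{-1/(d-1/2)}$, and take $n_0$ large enough that $h$ is below the scale permitted by $\tau$ and $L$.

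We expect step (iii) to be the main obstacle, together with its interaction with step (ii): designing a perturbation that is simultaneously reach-$\tau$ compatible, has volume only $h^{d-1/2}$, and still changes the geodesic distance by order $h$ rather than $h^2$. The first attempt will be a domain in which $x$ and $y$ are separated by a thin rounded wall. In $M_0$ the wall has a gap, while in $M_1$ the gap is closed, or shifted by $h$, so the geodesic tangent to the wall's rounded edge of radius $\tau$ is lengthened by order $h$.
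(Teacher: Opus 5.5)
The Le Cam reduction you use is the one the paper uses. The shape you end up guessing for the perturbation, extent $\asymp\sqrt{\tau h}$ in one direction and $\asymp h$ in the other $d-1$, is also right. The gap is step (iii): with $\|x-y\|\asymp 1$, no perturbation of that shape can change $D(x,y)$ by order $h$, because both sets have reach at least $\tau$.

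To see this, suppose $M_0$ and $M_1$ coincide outside a ball $B$ of diameter $\ell\le\tau$ with $x,y\notin B$. Take a $D_{M_0}$-geodesic from $x$ to $y$; if it avoids $B$ there is nothing to prove. Otherwise let $a,b$ be its first and last points in $B$. The pieces before $a$ and after $b$ lie in $M_0\setminus B\subset M_1$, and $a,b\in M_1$ by closedness. Hence
\begin{equation*}
    D_{M_1}(x,y)-D_{M_0}(x,y)\le D_{M_1}(a,b)-\|a-b\|\le\Big(\frac\pi3-1\Big)\frac{\ell^3}{\tau^2}
\end{equation*}
by \cref{eq:metric_distortion_technical_2} applied to $M_1$, and the same holds with the roles exchanged.

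With $\ell\asymp\sqrt{\tau h}$ the absolute change is $O(h^{3/2}\tau^{-1/2})$. So the relative perturbation at unit distance is of order $h^{3/2}$, and $h^{d-1/2}\asymp 1/n$ then yields only $n^{-3/(2d-1)}$. Conversely, any perturbation producing an absolute change $\gtrsim h$ must have diameter $\gtrsim(\tau^2h)^{1/3}\gg\sqrt{\tau h}$. Your displaced wall rim does act at first order, but for exactly this reason your volume count $h^{d-1}\sqrt{\tau h}$ does not apply to it.

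The missing idea is that $\ell_{\infty,M}$ is a relative loss and a supremum over all pairs, so the endpoints should sit at the scale of the perturbation. Your own heuristic $\frac{h^2}{\rho^2}\cdot\frac{\rho}{\|x-y\|}$ already gives $h^2/\rho^2\asymp h/\tau$ once $\|x-y\|\asymp\rho$.

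This is the paper's construction. $M_1$ is a small cube, and $M_2$ is the cube minus a ball of radius $\tau$ centered outside it that cuts an edge (a $1$-dimensional face) along a chord $[x,y]$ of length $2\epsilon$. Near an edge the domain is confined in all $d-1$ transverse directions. So the removed set has extent $2\epsilon$ along the edge and $\lesssim\epsilon^2/\tau$ transversally, which is your shape with $h\asymp\epsilon^2/\tau$, and its volume is $\lesssim\epsilon^{2d-1}\tau^{1-d}$. Meanwhile $D_{M_2}(x,y)=2\tau\arcsin(\epsilon/\tau)$ versus $D_{M_1}(x,y)=2\epsilon$ is a relative increase $\ge\epsilon^2/(6\tau^2)$, even though the absolute increase is only of order $\epsilon^3/\tau^2$. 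Taking $\epsilon^{2d-1}\asymp1/n$ gives the rate.

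The edge is essential. Notching a flat facet or a smooth boundary, as in your slab or rounded body, removes a cap of extent $\epsilon$ in $d-1$ directions. Its volume is $\asymp\epsilon^{d+1}/\tau$, which only gives $n^{-2/(d+1)}$ when $d\ge3$.
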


Notice that \cref{thm:minimax_lower_bound} only addresses the case of the induced metric. This is not a loss of generality and in fact highlights the fact that the conformal change does not make the problem any harder, as we have established in \cref{sec:regularity} that geodesics have the same regularity as in the case of the induced metric.
Given any other function $f$ satisfying the assumptions for a conformal metric, the same lower bound may be obtained with a similar reasoning as what follows, albeit with more technicalities.
The rate $n^{-1/(d-1/2)}$ is faster than the rate $n^{1/d}$ obtained in \cref{thm:convergence_ball_graph,thm:convergence_NN_graph}, although the difference becomes negligible when $d$ is large. The nature of the minimax convergence rate remains therefore open. Under stronger assumptions on the domain, this question is already solved---see \cite{aamariOptimalReachEstimation2023}---which we discuss in \cref{sec:k_d}.

\Cref{thm:minimax_lower_bound} is based on Le Cam's method \cite{yuAssouadFanoCam1997}, for which a statement adapted to our setup is given in \cref{lemma:lecam}.
The method consists in finding two measures $\mu_1$ and $\mu_2$ in $\cM(d,L,\tau)$ that are at most $1/n$ apart in total variation distance and such that the relative difference between $D_{M_1}$ and $D_{M_2}$ is of order at least $n^{-1/(d-1/2)}$.

Consider $\mu_1$ the uniform probability on the cube $M_1 = [-\alpha L,\alpha L]^d \times \{0\}^{\damb-d} \subset \RR^\damb$ where $\alpha>0$.
Let $0 < \epsilon \le \alpha L \wedge \tau$ and consider $M_2$ the result of removing from $M_1$ its intersection with a ball of radius $\tau$ centered at $(0,t,t,\dots,t)$ for some $t > \alpha L$ such that the ball intersects the edge from $(-\alpha L,\alpha L,\alpha L,\dots,\alpha L)$ to $(\alpha L,\alpha L,\alpha L,\dots,\alpha L)$ at two points $x$ and $y$ that are $2\epsilon$ apart, as pictured in \cref{fig:carved_cube}.
Denote $\mu_2$ the uniform probability over $M_2$, which has reach exactly $\tau$ due to the carved area. By choosing $\alpha$ small enough, which depends only on $d$, $\mu_1$ and $\mu_2$ are $d$-standard with lower constant $L^{-d}$.
Then $\mu_1$ and $\mu_2$ both belong to $\cM(d,L,\tau)$.
The volume of the carved area is of order $\epsilon(\epsilon^2)^{d-1}$ as it spans a length $2\epsilon$ between $x$ and~$y$ and a length of order $\epsilon^2$ for every other dimension of the cube. This implies that the total variation distance between $\mu_1$ and $\mu_2$ is of order $\epsilon^{2d-1}$.
Moreover, The distance from $x$ to~$y$ goes from $2\epsilon$ in $M_1$ to $2\tau\arcsin(\epsilon/\tau)$ in $M_2$, following the red arc of radius $\tau$ in \cref{fig:carved_cube}. This implies that the relative difference between both distances is of order $\epsilon^2$.
Choosing $\epsilon \asymp n^{-1/2d-1}$ so that the total variation distance is at most $1/n$ eventually yields the desired bound.
This reasoning is detailed in \cref{app:minimax}.

Using this method, the above construction leads to a minimax lower bound that does not quite fit the upper bound from \cref{thm:convergence_ball_graph}.
If a better lower bound can be obtained, it is likely that it would require a different technique.
Indeed, given fixed endpoints $x$ and $y$ at distance $\epsilon$ from each other and belonging to the intersection of two domains $M_1$ and $M_2$ with lower bounded reach, the relative difference between $D_{M_1}(x,y)$ and $D_{M_2}(x,y)$ is of order at most $\epsilon^2$ due to the reach assumption. Then, in order to achieve such distortion, it is reasonable to expect that the volume of the difference between $M_1$ and $M_2$ needs to be of order at least $\epsilon$ in the direction of $x-y$ and at least $\epsilon^2$ in the other directions to ensure that the uniform measures are $d$-standard, which leads to a volume of order $\epsilon^{2d-1}$ as in the previous construction.

\begin{figure}[tb]
\centering
\tdplotsetmaincoords{60}{150} 
\begin{tikzpicture}[tdplot_main_coords,scale=3]
    \def\eps{0.6} 

    \def\r{0.5}   
    \pgfmathsetmacro{\delta}{sqrt(\r*\r - (\eps/2)*(\eps/2))} 
    \pgfmathsetmacro{\theta}{asin(\eps/(2*\r))}              

    \def\rface{0.4}   
    \pgfmathsetmacro{\deltaface}{sqrt(\rface*\rface - (\eps/2)*(\eps/2))}
    \pgfmathsetmacro{\thetaface}{asin(\eps/(2*\rface))}

    \def\N{80}

    \coordinate (O) at (0,0,0);
    \coordinate (X) at (1,0,0);
    \coordinate (Y) at (0,1,0);
    \coordinate (Z) at (0,0,1);
    \coordinate (XY) at (1,1,0);
    \coordinate (XZ) at (1,0,1);
    \coordinate (YZ) at (0,1,1);
    \coordinate (XYZ) at (1,1,1);
    \coordinate (x) at (1,1,{0.5-\eps/2});
    \coordinate (y) at (1,1,{0.5+\eps/2});

    \begin{scope}
    \clip (Z)--(XZ)--(XYZ)--(YZ)--cycle;
    \fill[pattern=north east lines,pattern color=gray!70] (Z)--(XZ)--(XYZ)--(YZ)--cycle;
    \end{scope}

    \foreach \i in {0,...,\N} {
    \pgfmathsetmacro{\ang}{180 + \thetaface - (\i)*(2*\thetaface)/\N} 
    \coordinate (arcA\i) at
        ({1}, {1 + \deltaface + \rface*cos(\ang)}, {0.5 + \rface*sin(\ang)});
    }
    \foreach \i in {0,...,\N} {
    \pgfmathsetmacro{\ang}{180 + \thetaface - (\i)*(2*\thetaface)/\N} 
    \coordinate (arcB\i) at
        ({1 + \deltaface + \rface*cos(\ang)}, {1}, {0.5 + \rface*sin(\ang)});
    }
    \foreach \i in {0,...,\N} {
    \pgfmathsetmacro{\ang}{180 + \theta - (\i)*(2*\theta)/\N} 
    \coordinate (arcC\i) at
        ({1 + \delta + \r*cos(\ang)}, {1 + \delta + \r*cos(\ang)}, {0.5 + \r*sin(\ang)});
    }

    \begin{scope}
    \clip (X) -- (XY) -- (x)
        \foreach \i in {0,...,\N} { -- (arcA\i) }
        -- (y) -- (XYZ) -- (XZ) -- cycle;
    \fill[pattern=north east lines,pattern color=gray!60] (X)--(XY)--(XYZ)--(XZ)--cycle;
    \end{scope}
    \begin{scope}
    \clip (Y) -- (XY) -- (x)
        \foreach \i in {0,...,\N} { -- (arcB\i) }
        -- (y) -- (XYZ) -- (YZ) -- cycle;
    \fill[pattern=north east lines,pattern color=gray!60] (Y)--(XY)--(XYZ)--(YZ)--cycle;
    \end{scope}

    \draw[thick] (X)--(XY)--(Y);
    \draw[thick] (Z)--(XZ)--(X);
    \draw[thick] (Y)--(YZ)--(XYZ);
    \draw[thick] (Z)--(YZ);
    \draw[thick] (XZ)--(XYZ);
    \draw[thick] (XY)--(x);
    \draw[thick] (y)--(XYZ);

    \draw[line width=1pt] (arcA0)
    \foreach \i in {1,...,\N} { -- (arcA\i) };
    \draw[line width=1pt] (arcB0)
    \foreach \i in {1,...,\N} { -- (arcB\i) };
    \draw[line width=1pt, red] (arcC0)
    \foreach \i in {1,...,\N} { -- (arcC\i) };

    \draw[line width=1.2pt] ($(x) + (-0.02,0.02,0)$) -- ($(x) + (0.02,-0.02,0)$);
    \node[right=4pt] at (x) {$x$};
    \draw[line width=1.2pt] ($(y) + (-0.02,0.02,0)$) -- ($(y) + (0.02,-0.02,0)$);
    \node[right=4pt] at (y) {$y$};
\end{tikzpicture}
\caption{Carving an edge of the cube in $\RR^3$.}
\label{fig:carved_cube}
\end{figure}
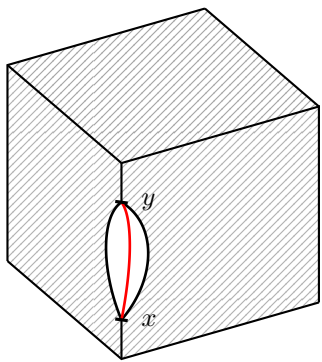

\subsection{Case of a Smooth Manifold}
\label{sec:k_d}

Recall that the minimax convergence rate for the induced metric of a $\cC^k$ manifold of dimension $d$ is $n^{-k/d}$ \cite{aamariOptimalReachEstimation2023}. The methods that achieve such rates are however not computationally feasible in practice as they rely on manifold reconstruction via non-discrete sets. For instance, the optimal convergence rate in the $\cC^2$ case was achieved by \cite{arias-castroPerturbationBoundsProcrustes2020} using the tangential Delaunay complex.
Precisely, these methods build an approximation of the manifold that is at Hausdorff distance of order $n^{-k/d}$, then state that the induced metric over this reconstruction is an estimation of the original metric with an error proportional to the Hausdorff distance, that is of order $n^{-k/d}$.

In order to get a concrete estimator, one may use such manifold reconstruction, then sample a fine $n^{-k/d}$-net over it which approximates the original domain with the same Hausdorff error. Our work then implies that the polygonal metric over this net would be an estimation of the original metric with minimax optimal error of order $n^{-k/d}$.
Such construction would however be very costly as the point cloud size would grow from $n$ to $n^k$.

\section{Practical Examples of Conformal Factors}
\label{sec:particular_f}

We discuss two examples of conformal factors associated with a measure from the literature.

\subsection{Density as a conformal factor}
If $\mu$ is a measure with density $\rho$ \ac{wrt} the volume measure over a submanifold $M$, the conformal change via $f = \rho^{-\beta}$ for some parameter $\beta>0$ is sometimes referred to as the \emph{Fermat distance} due to the parallel with the Fermat principle in optics---that may however be observed with any conformal change. This metric has been applied to various topological data analysis and learning problems, \eg, in \cite{fernandezIntrinsicPersistentHomology2023,garciatrillosFermatDistancesMetric2024}, as it tends to accentuate features and disparities in the data.
Moreover, the metric can be estimated in a simple fashion by using the same overall reasoning as ours but using weights of the form $w(x,y) = \|x-y\|^\alpha$ where $\alpha>1$ is a parameter depending on $\beta$ and $d$ \cite{groismanNonhomogeneousEuclideanFirstpassage2022,hwangShortestPathRandom2016}. However, this kind of estimation does not feature any known convergence rate.
Using our estimator instead provides an alternative method with quantitative guarantees, granted that an estimator of the density is available.
To this extent, density estimation is a well-studied problem with many propositions in the literature. For instance, \cite{berenfeldDensityEstimationUnknown2021} provides a kernel density estimator $\rho_n$ that converges in $L^p$ norm towards $\rho$ at a rate of $n^{-1/d+1}$ under our assumptions and provided that $M$ is a $\cC^1$ submanifold and that $\rho$ is also $\cC^1$.
Under these assumptions, setting $f_n = \rho_n^{-\beta}$ yields a convergence rate of $n^{-1/d}$ according to \cref{lemma:estimator_convergence_both_errors}, at the cost of a more complex computation than the usual discrete Fermat distance studied in \cite{groismanNonhomogeneousEuclideanFirstpassage2022}.

\subsection{Distance-to-measure as a conformal factor}
In general, given a parameter $m\in(0,1)$, the \emph{distance-to-measure} $d_{\mu,m} : \RR^\damb \to \RR_+$ introduced in \cite{chazalGeometricInferenceMeasures2011} is defined for any measure $\mu$ over~$\RR^\damb$. It is $1$-Lipschitz and lower bounded by a positive value as long as $\mu$ has no atom, hence satisfies the assumption for our work.
A slightly different setup where paths are allowed to leave the domain under a specific constraint is studied in \cite{taupinFermatDistancetoMeasureRobust2025}, where it is argued that this metric should behave similarly to the conformal metric associated with the density but with more stability \ac{wrt} the measure.
The distance-to-measure is shown \cite{chazalRatesConvergenceRobust2016} to be estimated from $n$ \iid samples of the underlying measure with convergence rate $n^{-1/2}$, which is faster than $n^{-1/d}$ hence does not impact the convergence speed of the estimator of the conformal metric according to \cref{lemma:estimator_convergence_both_errors}.

\section{Conclusion}
\label{sec:conclusion}

In this work, we have shown that under a reach assumption on the domain and Lipschitz lower bounded assumptions on the conformal change, the conformal metric has the same regularity as the induced metric in the sense of geodesics having positive reach.
We have also shown that positive reach is a sufficient assumption to ensure metric estimation from a polygonal metric with error proportional to the Hausdorff distance between the point cloud and the original domain.
This leads to a convergence rate of order $n^{-1/d}$ for the estimation of the conformal metric using $n$ \iid samples from a $d$-standard measure, which in particular applies to the induced metric of any set with positive reach.

\bibliography{biblio}

\appendix

\section{Intermediate Results}

This section is devoted to the proofs of some intermediate results used in the main text.

\subsection{Rate of Change of a Curve with Positive Reach}
\label{app:proof:path_direction_lipschitz}

In this section we prove \cref{lemma:path_direction_lipschitz}.
Let $v^+ = \gamma(t_0+\delta) - \gamma(t_0)$ and $v^- = \gamma(t_0) - \gamma(t_0-\delta)$.
The angle between two non-zero vectors is the arccosine of the dot product of the normalized vectors, thus
\begin{align}
    \label{eq:proof:path_direction_lipschitz_1}
    \angle\bLp v^+, \dot\gamma(t_0)\bRp
    = \arccos\Lp\Lan \frac{v^+}{\|v^+\|}, \dot\gamma(t_0) \Ran\Rp~.
\end{align}
Since $v^+ = \gamma(t_0+\delta) - \gamma(t_0) = \int_{t_0}^{t_0+\delta} \dot\gamma(t) \d t$,
\begin{align}
    \label{eq:proof:path_direction_lipschitz_2}
    \bLan v^+, \dot\gamma(t_0) \bRan
    = \int_{t_0}^{t_0+\delta} \bLan \dot\gamma(t), \dot\gamma(t_0) \bRan \d t
    = \int_{t_0}^{t_0+\delta} \cos\bLp\angle\bLp\dot\gamma(t) , \dot\gamma(t_0)\bRp\bRp \d t~.
\end{align}
Moreover, $\|v^+\| = \|\gamma(t_0+\delta) - \gamma(t_0)\| \le \delta$ as $\gamma$ is $1$-Lispchitz, and $\cos$ is non-increasing over $[0,\frac\pi2]$. Therefore, \cref{eq:proof:path_direction_lipschitz_2,eq:curve_reach_C11_angle} imply that
\begin{align}
    \label{eq:proof:path_direction_lipschitz_3}
    \Lan \frac{v^+}{\|v^+\|}, \dot\gamma(t_0) \Ran
    = \frac{1}{\|v^+\|} \int_{t_0}^{t_0+\delta} \cos\bLp\angle\bLp\dot\gamma(t) , \dot\gamma(t_0)\bRp\bRp \d t
    \ge \frac1\delta \int_{t_0}^{t_0+\delta} \cos\Lp\frac{t-t_0}{\tau_\gamma}\Rp \d t~.
\end{align}
Now, since $\arccos$ is non-increasing and concave over $[0,1]$, \cref{eq:proof:path_direction_lipschitz_1,eq:proof:path_direction_lipschitz_3} together with Jensen's inequality yield
\begin{align*}
    \angle\bLp v^+, \dot\gamma(t_0)\bRp
    \le \arccos\Lp \frac1\delta \int_{t_0}^{t_0+\delta} \cos\Lp\frac{t-t_0}{\tau_\gamma}\Rp \d t \Rp
    \le \frac1\delta \int_{t_0}^{t_0+\delta} \frac{t-t_0}{\tau_\gamma} \d t
    = \frac{\delta}{2\tau_\gamma}
\end{align*}
which concludes \cref{eq:path_direction_lipschitz_1}.
The same result holds when comparing $\dot\gamma(t_0)$ to a small variation before $t_0$ instead of after, therefore
\begin{align*}
    \angle\bLp v^+, v^- \bRp
    \le \angle\bLp v^+, \dot\gamma(t_0)\bRp + \angle\bLp v^-, \dot\gamma(t_0)\bRp
    \le \frac{\delta}{\tau_\gamma}~.
\end{align*}
Moreover, applying \cref{eq:arcsin_inequality} to the induced metric on the curve $\gamma$ between endpoints $\gamma(t_0)$ and either $\gamma(t_0+\delta)$ or $\gamma(t_0-\delta)$ shows that
\begin{align*}
    \delta \le \min\Lp 2\tau_\gamma\arcsin\Lp\frac{\|v^+\|}{2\tau_\gamma}\Rp, 2\tau_\gamma\arcsin\Lp\frac{\|v^-\|}{2\tau_\gamma}\Rp \Rp~.
\end{align*}
Finally,
\begin{align*}
    \Lvv \frac{v^+}{\|v^+\|} - \frac{v^-}{\|v^-\|} \Rvv
    = 2\sin\Lp\frac12\angle\bLp v^+, v^- \bRp\Rp
    \le 2\sin\Lp\frac{\delta}{2\tau_\gamma}\Rp
    \le \frac{1}{\tau_\gamma} \min\bLp\|v^+\|, \|v^-\|\bRp
\end{align*}
which concludes \cref{eq:path_direction_lipschitz_2}.

\subsection{Convergence of a Point Cloud in Hausdorff Distance}
\label{app:proof:hausdorff_expectation}

Recall the convergence in probability of the Hausdorff distance between an \iid point cloud and the support of a $d$-standard measure.
\begin{lemma}{\cite[Theorem 2]{chazalConvergenceRatesPersistence2015}}
\label{lemma:hausdorff_proba_bound}
Let $X_n$ be sampled \iid from a $d$-standard measure~$\mu$ with support $M$ and lower constant $c_\mu$. Then for all $\epsilon > 0$,
\begin{align*}
    \PP\Lp \hausdorff(M,X_n) > \epsilon \Rp
    \le \frac{4^d}{c_\mu\epsilon^d} \exp\Lp-n \frac{c_\mu\epsilon^d}{2^d}\Rp~.
\end{align*}
\end{lemma}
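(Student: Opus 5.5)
The plan is a covering (packing) argument: reduce the event $\{\hausdorff(M,X_n)>\epsilon\}$ to a finite union of events ``some fixed small ball contains no sample'', then apply a union bound.

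First, $X_n\subset M$ almost surely, since $M$ is the support of $\mu$. Hence $\hausdorff(M,X_n)=\sup_{x\in M} d(x,X_n)$ almost surely, and only the covering side has to be controlled.

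Next, pick a maximal $\epsilon/2$-separated subset $\{p_1,\dots,p_m\}\subset M$, meaning $\|p_i-p_j\|\ge\epsilon/2$ for $i\ne j$. This set is finite by the packing bound below, so a maximal one exists; one can obtain it by greedily adding points of $M$ with pairwise distances at least $\epsilon/2$ and using that bound to see that the process stops.
\begin{itemize}
    \item \emph{Packing bound.} The open balls $\oball(p_i,\epsilon/4)$ are pairwise disjoint. By the $d$-standard assumption each has $\mu$-mass at least $c_\mu(\epsilon/4)^d\wedge 1$.
    \begin{itemize}
        \item If $c_\mu(\epsilon/4)^d<1$, summing the masses gives $m\,c_\mu\epsilon^d/4^d\le 1$, that is $m\le 4^d/(c_\mu\epsilon^d)$.
        \item If $c_\mu(\epsilon/4)^d\ge 1$, each ball has full mass, so disjointness forces $m=1$. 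Moreover every ball of radius $\epsilon/2$ centred in $M$ has mass $1$, so any single sample is within distance $\epsilon/2<\epsilon$ of every point of $M$ almost surely (using $n\ge1$ and $\diam$ control coming from full-mass balls). The probability is then $0$ and the bound is trivial.
    \end{itemize}
    \item \emph{Reduction.} By maximality, every $x\in M$ lies within $\epsilon/2$ of some $p_i$. So if each $\oball(p_i,\epsilon/2)$ contains a sample, the triangle inequality gives $d(x,X_n)<\epsilon$ for all $x\in M$. Equivalently, $\{\hausdorff(M,X_n)>\epsilon\}\subset\bigcup_i\{X_n\cap\oball(p_i,\epsilon/2)=\emptyset\}$.
\end{itemize}

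Finally, by independence of the $n$ samples,
\begin{equation*}
    \PP\bLp X_n\cap\oball(p_i,\epsilon/2)=\emptyset\bRp
    \le \bLp 1-c_\mu(\epsilon/2)^d\bRp^n
    \le \exp\Lp -n\frac{c_\mu\epsilon^d}{2^d}\Rp,
\end{equation*}
using $1-u\le e^{-u}$. This is valid when $c_\mu(\epsilon/2)^d\le 1$. If instead $c_\mu(\epsilon/2)^d>1$, the mass is $1$ and the probability is $0$.

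A union bound over the $m\le 4^d/(c_\mu\epsilon^d)$ centres then yields the claim. The only mildly delicate step is bookkeeping the truncation ``$\wedge 1$'' in the $d$-standard condition, since it must be handled separately in both the packing count and the empty-ball estimate. In each case the saturated regime makes the corresponding bound trivial, so I expect no genuine obstacle.
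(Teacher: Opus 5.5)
Your proof is correct. The paper gives no proof of its own and imports the lemma from \cite[Theorem 2]{chazalConvergenceRatesPersistence2015}, which uses the same maximal $\epsilon/2$-separated net, the packing count $m\le 4^d/(c_\mu\epsilon^d)$ via disjoint $\epsilon/4$-balls, and a union bound with $(1-c_\mu(\epsilon/2)^d)^n\le e^{-nc_\mu\epsilon^d/2^d}$, so your constants match; your separate handling of the saturated regime $c_\mu(\epsilon/4)^d\ge 1$ is also needed and correct, since there $m=1$ may exceed $4^d/(c_\mu\epsilon^d)$, and it is $\mathrm{diam}(M)\le\epsilon/4$ (each closed full-mass ball contains the support) that makes the probability vanish.
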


To establish our results in \cref{sec:random_samples}, we require the convergence of the Hausdorff distance in expectation. 
\begin{corollary}
\label{corol:hausdorff_expectation}
Under the same assumptions as in \cref{lemma:hausdorff_proba_bound} and assuming that $n\ge8$, for all $1\le p \le 2d$ it holds that
\begin{align*}
    \EE\bLb\hausdorff(M, X_n)^p\bRb
    \le \frac{2^{3p/2}}{c_\mu^{p/d}} \Lp\frac{\log(n)}{n}\Rp^{p/d}~.
\end{align*}
\end{corollary}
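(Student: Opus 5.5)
I will use the layer-cake formula for the $p$-th moment. Write $\epsilon_n = (a\log(n)/(c_\mu n))^{1/d}$ for a constant $a$ to be chosen. Since $\hausdorff(M,X_n)\ge0$,
\begin{align*}
    \EE\bLb\hausdorff(M,X_n)^p\bRb
    = \int_0^\infty p\,\epsilon^{p-1}\,\PP\bLp\hausdorff(M,X_n)>\epsilon\bRp \d\epsilon
    \le \epsilon_n^p + \int_{\epsilon_n}^{\infty} p\,\epsilon^{p-1}\,\PP\bLp\hausdorff(M,X_n)>\epsilon\bRp \d\epsilon~.
\end{align*}
On $[0,\epsilon_n]$ I bound the probability by $1$. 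On $[\epsilon_n,\infty)$ I use \cref{lemma:hausdorff_proba_bound}. The natural change of variables is $u = c_\mu\epsilon^d/2^d$, so that $\epsilon^{p-1}\d\epsilon = \frac{2^p}{d\,c_\mu^{p/d}}u^{p/d-1}\d u$. The tail integral then becomes
\begin{align*}
    \frac{p\,4^d\,2^{p}}{2^d\,d\,c_\mu^{p/d}} \int_{u_n}^\infty u^{p/d-2} e^{-nu}\d u~,
    \qquad u_n = \frac{a\log(n)}{2^d n}~.
\end{align*}
The factor $\epsilon^{-d}$ from the lemma has been absorbed as $u^{-1}$.

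Because $p\le 2d$, the exponent $p/d-2\le0$. Hence $u^{p/d-2}\le u_n^{p/d-2}$ on the domain of integration, and the integral is at most $u_n^{p/d-2}e^{-nu_n}/n$. Choosing $a = 2^d$ gives $u_n=\log(n)/n$ and $e^{-nu_n}=1/n$, so the tail term is of order
\begin{align*}
    \frac{p\,2^{d+p}}{d\,c_\mu^{p/d}}\Lp\frac{\log n}{n}\Rp^{p/d}\frac{1}{\log(n)^2}~.
\end{align*}
This is the same rate as the main term, up to a constant that the factor $1/\log(n)^2$ makes small.

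The real obstacle is matching the precise constant $2^{3p/2}$. With $a=2^d$ the main term alone is already $2^p(\log n/n)^{p/d}c_\mu^{-p/d}$, and $2^{3p/2}-2^p = 2^p(2^{p/2}-1)$ leaves some slack for the tail. The tail prefactor $p\,2^{d}/(d\log(n)^2)\le 2^{d+1}/\log(n)^2$ is however not obviously below $2^{p/2}-1$ when $d$ is large and $p$ is small. I would therefore try a larger $a$, for instance $a = 2^{d}\cdot 2^{d/2}$ or $a=2^{3d/2}$. With such an $a$ the main term becomes at most $2^{3p/2}$ times the rate, minus some margin, while $e^{-nu_n} = n^{-2^{d/2}}$ kills the tail. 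The condition $n\ge8$, giving $\log n\ge 2$, together with $p\ge1$, is then used to absorb the polynomially decaying tail into the remaining margin.

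If the exact bookkeeping fails, a cleaner alternative is to pick $\epsilon_n$ with $a=2^{3d/2}$ exactly, bound the main term by $2^{3p/2}$ times the rate, and then show that the tail is $\le$ the small difference. This may require a slightly looser first step, for instance using $\PP\le\min(1,\cdot)$ and comparing $(\log n/n)^{p/d}\cdot n^{-(2^{d/2}-1)}$ against the margin. I expect only this constant tracking, and not the structure of the argument, to need care.
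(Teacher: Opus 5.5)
Your setup is correct as far as it goes. The layer-cake formula, the substitution $u=c_\mu\epsilon^d/2^d$, the tail prefactor $p\,2^{d+p}/(d\,c_\mu^{p/d})$, and the use of $p\le 2d$ to get $u^{p/d-2}\le u_n^{p/d-2}$ all check out. The gap is the step you leave open, and the choice of $a$ you commit to cannot close it.

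With $a=2^d$ your bound is $2^p(1+p\,2^d/(d\log(n)^2))\,c_\mu^{-p/d}(\log(n)/n)^{p/d}$, and this exceeds the target. For $d=2$, $p=1$, $n=8$ the prefactor is about $2.93$ against $2^{3/2}\approx 2.83$. Your two proposed alternatives, $2^d\cdot 2^{d/2}$ and $2^{3d/2}$, are the same number. For that value, $\epsilon_n=2^{3/2}(\log(n)/(c_\mu n))^{1/d}$, so $\epsilon_n^p$ alone already \emph{equals} $2^{3p/2}c_\mu^{-p/d}(\log(n)/n)^{p/d}$. There is no margin, and the strictly positive tail bound always overshoots. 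Your ``cleaner alternative'' therefore asks you to absorb the tail into a difference that is zero. As written, the proposal does not prove the stated inequality.

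The paper avoids this bookkeeping by bounding the $2d$-th moment of $Z=\hausdorff(M,X_n)$ first. With $\alpha=c_\mu n/2^d$, the integrand $2d\,t^{2d-1}\cdot t^{-d}e^{-\alpha t^d}=2d\,t^{d-1}e^{-\alpha t^d}$ integrates in closed form. The threshold $t_0=(\log(n)/\alpha)^{1/d}$ then gives $\EE[Z^{2d}]\le(2^{2d}+2^{3d+1}/\log(n)^2)(\log(n)/(c_\mu n))^2\le 2^{3d}(\log(n)/(c_\mu n))^2$, using $\log(n)^2\ge 4$. Jensen's inequality, $\EE[Z^p]\le\EE[Z^{2d}]^{p/(2d)}$, yields exactly $2^{3p/2}$. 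This is where both the restriction $p\le 2d$ and the constant come from.

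If you prefer your direct route, take $a=\theta\,2^{3d/2}$ with a fixed $\theta<1$. With $s=p/d$, your two terms sum to $2^{3p/2}c_\mu^{-p/d}(\log(n)/n)^{p/d}$ times $\theta^s+s\,\theta^{s-2}n^{1-\theta 2^{d/2}}/\log(n)^2$. Since $1-\theta^s\ge s\,\theta^s\log(1/\theta)$, it suffices that $\theta^{-2}\,8^{1-2\theta}/\log(8)^2\le\log(1/\theta)$. This is the worst case $d=2$, $n=8$, which is valid once $\theta\ge 1/2$ because $d\ge 2$. For $\theta=3/4$ the condition reads roughly $0.15\le 0.29$, which completes the proof along your lines.
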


\begin{proof}
Denote $Z = \hausdorff(M, X_n)$, $c = 4^d / c_\mu$ and $\alpha = c_\mu n / 2^d$ for simplicity, so that
\begin{align}
    \label{eq:proof_hausdorff_expectation}
    \PP(Z > t)
    \le \frac{c}{t^d} \exp(-\alpha t^d)
\end{align}
for all $t>0$ according to \cref{lemma:hausdorff_proba_bound}. Then, the expectation of $Z^{2d}$ can be written as
\begin{align*}
    \EE\bLb Z^{2d}\bRb
    = \int_0^\pinfty \PP(Z^{2d} > u) \d u
    = \int_0^\pinfty 2d t^{2d-1}\PP(Z > t) \d t~.
\end{align*}
Given any $t_0>0$, upper bounding the probability by $1$ when $t\le t_0$ and using \cref{eq:proof_hausdorff_expectation} when $t>t_0$ instead yields
\begin{align*}
    \EE\bLb Z^{2d}\bRb
    \le \int_0^{t_0} 2d t^{2d-1} \d t + 2c \int_{t_0}^\pinfty d t^{d-1} \exp(-\alpha t^d) \d t
    = t_0^{2d} + \frac{2c}{\alpha} \exp(-\alpha t_0^d)~.
\end{align*}
Then, setting $t_0 = \bLp\log(n)/\alpha\bRp^{1/d}$ and replacing $c$ and $\alpha$ with their definition yields
\begin{align*}
    \EE\bLb Z^{2d}\bRb
    \le 2^{2d} \Lp\frac{\log(n)}{c_\mu~n}\Rp^2 + \frac{2^{3d+1}}{c_\mu^2~n^2}
    = \Lp 2^{2d} + \frac{2^{3d+1}}{\log(n)^2} \Rp \Lp\frac{\log(n)}{c_\mu~n}\Rp^2
    \le 2^{3d} \Lp\frac{\log(n)}{c_\mu~n}\Rp^2
\end{align*}
by lower bounding $\log(n)^2 \ge 4$ when $n\ge8$.
Finally, using Jensen's inequality yields the desired inequality for all $1\le p\le 2d$, that is
\begin{align*}
    \EE\bLb Z^p\bRb
    \le \EE\bLb Z^{2d}\bRb^{\frac{p}{2d}}
    &\le 2^{\frac{3p}{2}} \Lp\frac{\log(n)}{c_\mu~n}\Rp^{\frac{p}{d}}~.
\end{align*}
\end{proof}

\subsection{Ball Graphs and Nearest-Neighbors Graphs}
\label{app:graph_ball_to_NN}

In this section we prove \cref{prop:graph_ball_to_NN}. Denote $X_n = (x_1,\dots,x_n) \sim \mu^{\otimes n}$ the point cloud.
As a preliminary result, denote for all $n\in\NN^*$, $p\in(0,1)$ and $k\in\RR$, $\phi(n,p,k)$ the probability that a random variable following the binomial law with parameters $n$ and $p$ is greater than $k$. Denote $\psi(n,p,k)$ the probability that a random variable with same law is smaller than $k$. Then for all $k > np$,
\begin{align}
    \label{eq:chernoff1}
    \phi(n,p,k)
    \le \exp\Lp \Lp1 + \log\Lp\frac{np}{k}\Rp - \frac{np}{k}\Rp k\Rp~,
\end{align}
and for all $k < np$,
\begin{align}
    \label{eq:chernoff2}
    \psi(n,p,k)
    \le \exp\Lp \Lp1 + \log\Lp\frac{np}{k}\Rp - \frac{np}{k}\Rp k\Rp~.
\end{align}
Indeed, both quantities may be upper bounded using a Chernoff bound by
\begin{align*}
    \Lp\frac{np}{k}\Rp^k \Lp\frac{n-np}{n-k}\Rp^{n-k}
    = \Lp\frac{np}{k}\Rp^k \Lp1+\frac{k-np}{n-k}\Rp^{n-k}
    \le \Lp\frac{np}{k}\Rp^k e^{k-np}~.
\end{align*}
Now consider $k\in\{1,\dots,n-1\}$, $\epsilon\in(0,1)$ and let
\begin{align*}
    r_- = \Lp(1-\epsilon)\frac{k}{C_\mu(n-1)}\Rp^{\frac1d}
    \eqand
    r_+ = \Lp\frac{1}{1-\epsilon}\frac{k}{c_\mu(n-1)}\Rp^{\frac1d}~.
\end{align*}
For all $x\in M$, denote $p_x^- = \mu\bLp\oball(x,r_-)\bRp$, which is upper bounded by $p^- = C_\mu (r_-)^d$ by Ahlfors assumption.
Denote $A_i^-$ the event that $\oball(x_i,r_-)$ contains at least $k$ points from $X_n \setminus \{x_i\}$. Then, since each of the $n-1$ other sample points has a probability $p_x^- \le p^-$ of falling within $\oball(x,r_-)$,
\begin{align*}
    \PP(A_i^-)
    = \int_M \PP(A_i^- | x_i=x)~\d \mu(x)
    = \int_M \phi(n-1,p_x^-,k)~\d \mu(x)
    \le \phi(n-1,p^-,k)~.
\end{align*}
Notice that $(n-1)p^- = (1-\epsilon)k < k$. Then, according to \cref{eq:chernoff1},
\begin{align*}
    \PP(A_i^-)
    \le \exp\bLp(1+\log(1-\epsilon)-1+\epsilon)k\bRp
    \le \exp\Lp-\frac{\epsilon^2}{2}k\Rp~.
\end{align*}
If the graph $G_{r_-}(X_n)$ is not included in $\cG_k(X_n)$, then there exists two sample points $x_i$ and $x_j$ that are at most $r_-$ apart but such that neither of them is one of the $k$ nearest neighbors of the other, and in particular $A_i^-$ holds. Then
\begin{align}
    \label{eq:graph_inclusion1}
    \PP\Lp G_{r_-}(X_n) \not\subset \cG_k(X_n) \Rp
    \le \PP\Lp \bigcup_{i=1}^n A_i^- \Rp
    \le n \exp\Lp-\frac{\epsilon^2}{2}k\Rp~.
\end{align}
For all $x\in M$ denote $p_x^+ = \mu\bLp\cball(x,r_+)\bRp$, which is lower bounded by $p^+ = c_\mu (r_+)^d$ by Ahlfors assumption.
Denote $A_i^+$ the event that $\cball(x_i,r_+)$ contains at most $k-1$ points from $X_n \setminus \{x_i\}$. Then, since each of the $n-1$ other sample points has a probability $p_x^+ \ge p^+$ of falling within $\cball(x,r_+)$,
\begin{align*}
    \PP(A_i^+)
    = \int_M \PP(A_i^+ | x_i=x)~\d \mu(x)
    = \int_M \psi(n-1,p_x^+,k-1)~\d \mu(x)
    \le \psi(n-1,p^+,k)~.
\end{align*}
Notice that $k = (1-\epsilon)(n-1)p^+ < (n-1)p^+$. Then, according to \cref{eq:chernoff2},
\begin{align*}
    \PP(A_i^+)
    \le \exp\Lp\Lp1+\log\Lp\frac{1}{1-\epsilon}\Rp-\frac{1}{1-\epsilon}\Rp k\Rp
    \le \exp\Lp-\frac{\epsilon^2}{2}k\Rp~.
\end{align*}
If the graph $\cG_k(X_n)$ is not included in $G_{r_+}(X_n)$, then there exists two sample points $x_i$ and $x_j$ that are more than $r_-$ apart but such that $x_j$ is one of the $k$ nearest neighbors of $x_i$, and in particular $A_i^+$ holds. Then
\begin{align}
    \label{eq:graph_inclusion2}
    \PP\Lp \cG_k(X_n) \not\subset G_{r_+}(X_n) \Rp
    \le \PP\Lp \bigcup_{i=1}^n A_i^+ \Rp
    \le n \exp\Lp-\frac{\epsilon^2}{2}k\Rp~.
\end{align}
Finally, it follows from \cref{eq:graph_inclusion1,eq:graph_inclusion2} that
\begin{align*}
    \PP\Lp G_{r_-}(X_n) \subset \cG_k(X_n) \subset G_{r_+}(X_n) \Rp
    \ge 1 - 2n \exp\Lp-\frac{\epsilon^2}{2}k\Rp
\end{align*}
which concludes the proof of \cref{prop:graph_ball_to_NN}.

\section{Deterministic Technical Proofs}

This section is devoted to technical proofs involving paths of positive reach and the objects studied in \cref{sec:polygonal_approximation}.

\subsection{Length of Paths}
\label{app:path_length}

In this section we state some useful inequalities deduced from \cref{eq:arcsin_inequality} along with lemmas for comparing the conformal length of paths.
The following quantities are all valid upper bounds of $\arcsin(t)-t$ for all $0<t\le\frac12$, which is deduced from elementary computations.
\begin{align}
    \label{eq:trigo}
    \frac{\arcsin(t)^3}{6}~,\quad
    4\Lp1-\frac3\pi\Rp \arcsin(t)t^2~,\quad
    4\Lp\frac\pi3-1\Rp t^3~,\quad
    \frac18 \Lp \arcsin(2t) - 2t\Rp~.
\end{align}
The lower bounds
\begin{align}
    \label{eq:trigo_lower}
    \arcsin(t) - t
    \ge \arcsin(t)\frac{t^2}{6}
    \ge \frac{t^3}{6}
\end{align}
also hold for all $0\le t\le1$.
Applying the upper bounds from \cref{eq:trigo} to \cref{eq:arcsin_inequality} with $t = \|x-y\| / 2\tau_M$ yields the following inequalities.
\begin{lemma}
\label{lemma:metric_distortion_technical}
If $\|x-y\| \le \tau_M$ then
\begin{align}
    \label{eq:metric_distortion_technical_1}
    \|x-y\|
    &\ge D_M(x,y) - \frac{D_M(x,y)^3}{24\tau_M^2}~,
\end{align}
\begin{align}
    \label{eq:metric_distortion_technical_2}
    D_M(x,y)
    \le \|x-y\| + \Lp\frac\pi3-1\Rp \frac{\|x-y\|^3}{\tau_M^2}
    \le \frac{\pi}{3} \|x-y\|~,
\end{align}
\begin{align}
    \label{eq:metric_distortion_technical_3}
    1 - \frac{\|x-y\|}{D_M(x,y)}
    \le \Lp1-\frac3\pi\Rp \frac{\|x-y\|^2}{\tau_M^2}~,
\end{align}
\begin{align}
    \label{eq:metric_distortion_technical_4}
    D_M(x,y) - \|x-y\|
    \le \frac14 \Lp\tau_M\arcsin\Lp\frac{\|x-y\|}{\tau_M}\Rp - \|x-y\|\Rp~.
\end{align}
\end{lemma}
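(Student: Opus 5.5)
The plan is to set $t = \|x-y\|/(2\tau_M) \in (0,\frac12]$, which is allowed since $\|x-y\|\le\tau_M < 2\tau_M$. Then \cref{eq:arcsin_inequality} reads $D_M(x,y) \le 2\tau_M\arcsin(t)$, while trivially $\|x-y\| = 2\tau_M t \le D_M(x,y)$. Each of the four inequalities will come from applying one of the upper bounds of \cref{eq:trigo} to the difference $\arcsin(t)-t$ and rescaling by $2\tau_M$.

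\textbf{Inequality \cref{eq:metric_distortion_technical_1}.} I use the bound $\arcsin(t)-t\le \arcsin(t)^3/6$. With $s=\arcsin(t)$, this says $2\tau_M\sin s \ge 2\tau_M s - 2\tau_M s^3/6$. The map $u\mapsto u - u^3/(24\tau_M^2)$ is increasing for $u\le 2\tau_M$, and $D_M(x,y) \le 2\tau_M s \le 2\tau_M\cdot\pi/6 <2\tau_M$. Hence
\[
\|x-y\| = 2\tau_M\sin s \ge 2\tau_M s - \frac{(2\tau_M s)^3}{24\tau_M^2} \ge D_M - \frac{D_M^3}{24\tau_M^2}.
\]

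\textbf{Inequality \cref{eq:metric_distortion_technical_2}.} I use $\arcsin(t)-t\le 4(\frac\pi3-1)t^3$ and multiply by $2\tau_M$. This gives $D_M - \|x-y\| \le 8\tau_M(\frac\pi3-1)t^3 = (\frac\pi3-1)\|x-y\|^3/\tau_M^2$. The second bound follows from $\|x-y\|^2/\tau_M^2\le1$.

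\textbf{Inequality \cref{eq:metric_distortion_technical_3}.} I use $\arcsin(t)-t \le 4(1-\frac3\pi)\arcsin(t)t^2$. Dividing by $\arcsin(t)$ gives
\[
1-\frac{t}{\arcsin t}\le \Lp1-\frac3\pi\Rp\frac{\|x-y\|^2}{\tau_M^2}.
\]
Since $D_M\le 2\tau_M\arcsin t$, I have $1-\|x-y\|/D_M \le 1 - t/\arcsin(t)$, which closes this case.

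\textbf{Inequality \cref{eq:metric_distortion_technical_4}.} I use $\arcsin(t)-t\le\frac18(\arcsin(2t)-2t)$, which is valid since $2t\le1$. Multiplying by $2\tau_M$ gives
\[
D_M-\|x-y\| \le \frac{\tau_M}{4}\Lp\arcsin\Lp\frac{\|x-y\|}{\tau_M}\Rp - \frac{\|x-y\|}{\tau_M}\Rp,
\]
which is the claim.

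\textbf{Expected difficulty.} The only genuine obstacle is verifying the elementary bounds \cref{eq:trigo} themselves. The paper asserts them, so I take them as given. If I had to check them, I would expand the Taylor series $\arcsin t - t = \sum_{k\ge1} c_k t^{2k+1}$ with positive coefficients, and compare the endpoint values at $t=\frac12$ using monotonicity of ratios. Within the proof itself, the only subtle point is the monotonicity argument in \cref{eq:metric_distortion_technical_1}.
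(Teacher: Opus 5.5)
Your proof is correct and takes the same route as the paper: substitute $t=\|x-y\|/(2\tau_M)$ into \cref{eq:arcsin_inequality}, then apply the four bounds of \cref{eq:trigo} one at a time. The only addition is that you spell out the monotonicity of $u\mapsto u-u^3/(24\tau_M^2)$ on $[0,2\tau_M]$ needed for \cref{eq:metric_distortion_technical_1}, a step the paper leaves implicit.
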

Note that in \cref{lemma:metric_distortion_technical} the set $M$ may be the whole domain as well as any given curve~$\gamma$ between endpoints $x$ and $y$ and with positive reach, in which case $D_\gamma(x,y) = |\gamma|$.

The following results show that a path with length close to the Euclidean distance between its endpoints must remain close to the straight line at all time. Since $f$ is Lipschitz, it follows that two paths with same endpoints and length close to the Euclidean distance between the endpoints also have similar conformal lengths.
\begin{lemma}
\label{lemma:path_deviation}
Let $x,y\in \RR^\damb$ and $\gamma \in \Gamma(x,y)$ be a path parameterized over $[0,1]$ with constant velocity. Denote $\overline\gamma : t \mapsto (1-t)x+ty$ the straight path from $x$ to $y$. Then
\begin{align}
    \|\gamma-\overline\gamma\|_1
    \le \frac{1}{\sqrt6} \sqrt{|\gamma|^2-|\overline\gamma|^2}~.
\end{align}
\end{lemma}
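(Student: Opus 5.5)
The idea is to reduce the statement to a one-dimensional Poincaré-type inequality for a path vanishing at both ends. Here $\|\cdot\|_1$ is understood as $\int_0^1\|\gamma(t)-\overline\gamma(t)\|\d t$.

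\textbf{Setup and energy identity.} Set $h = \gamma-\overline\gamma$ on $[0,1]$. Then $h$ is Lipschitz with $h(0)=h(1)=0$, and $\dot\gamma = (y-x)+\dot h$ almost everywhere. Since $\gamma$ has constant velocity, $\|\dot\gamma\| = |\gamma|$ almost everywhere. Expanding the square and integrating gives
\begin{equation*}
    |\gamma|^2 = \int_0^1 \|\dot\gamma\|^2 = \|y-x\|^2 + 2\Lan y-x, \int_0^1 \dot h\Ran + \int_0^1\|\dot h\|^2 .
\end{equation*}
The middle term vanishes because $\int_0^1\dot h = h(1)-h(0)=0$, and $\|y-x\| = |\overline\gamma|$. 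Writing $E \eqdef \int_0^1\|\dot h\|^2$, we therefore get
\begin{equation*}
    |\gamma|^2-|\overline\gamma|^2 = E .
\end{equation*}

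\textbf{Pointwise bound on $h$.} Fix $t\in(0,1)$ and let $A=\int_0^t\|\dot h\|^2$ and $B=\int_t^1\|\dot h\|^2$, so that $A+B=E$. Writing $h(t)=\int_0^t\dot h$ and using Cauchy--Schwarz gives $\|h(t)\|^2\le tA$. Writing $h(t)=-\int_t^1\dot h$ gives $\|h(t)\|^2\le (1-t)B$. The minimum of two numbers is at most any convex combination of them, so
\begin{equation*}
    \|h(t)\|^2 \le (1-t)\cdot tA + t\cdot(1-t)B = t(1-t)E .
\end{equation*}

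\textbf{Integration.} By Cauchy--Schwarz (or Jensen) on $[0,1]$,
\begin{equation*}
    \int_0^1\|h\| \le \Lp\int_0^1\|h\|^2\Rp^{1/2} \le \Lp E\int_0^1 t(1-t)\d t\Rp^{1/2} = \sqrt{E/6},
\end{equation*}
which is the claimed bound $\|\gamma-\overline\gamma\|_1 \le \frac{1}{\sqrt6}\sqrt{|\gamma|^2-|\overline\gamma|^2}$.

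Integrating $\sqrt{t(1-t)}$ directly would give the slightly better constant $\pi/8$, but $1/\sqrt6$ suffices. The only step needing care is the energy identity, which relies on the constant-velocity parameterization so that $\int_0^1\|\dot\gamma\|^2=|\gamma|^2$. Everything else is routine.
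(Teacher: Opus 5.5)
Your proof is correct and follows the paper's proof: both establish the pointwise bound $\|\gamma(t)-\overline\gamma(t)\|^2\le t(1-t)\,(|\gamma|^2-|\overline\gamma|^2)$, then conclude with $\|\cdot\|_1\le\|\cdot\|_2$ and $\int_0^1 t(1-t)\,dt=1/6$. The paper obtains the pointwise bound from the identity $\|\gamma(t)-\overline\gamma(t)\|^2=(1-t)a^2+tb^2-t(1-t)|\overline\gamma|^2$ with $a=\|\gamma(t)-x\|\le t|\gamma|$ and $b=\|\gamma(t)-y\|\le(1-t)|\gamma|$; your one-sided bounds $\|h(t)\|^2\le tA$ and $\|h(t)\|^2\le(1-t)B$, with weights $(1-t)$ and $t$, are the same two inequalities rewritten in terms of $h=\gamma-\overline\gamma$, since under constant speed $tA-\|h(t)\|^2=t^2|\gamma|^2-a^2$ and $(1-t)B-\|h(t)\|^2=(1-t)^2|\gamma|^2-b^2$.
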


\begin{proof}
Fix $t\in[0,1]$. Denote $a \le |\gamma|t$ and $b \le |\gamma|(1-t)$ the respective distances from $\gamma(t)$ to $x$ and $y$. Denote $\tau\in\RR$ such that $\overline\gamma(\tau)$ is the orthogonal projection of $\gamma(t)$ onto the line passing through $x$ and $y$. The fact that $\tau$ may not belong to $[0,1]$ does not influence the following. Finally denote $h = \|\gamma(t)-\overline\gamma(\tau)\|$. See \cref{fig:path_deviation} for an illustration of the above.
\begin{figure}[tb]
    \centering
    \includegraphics[width=0.7\textwidth]{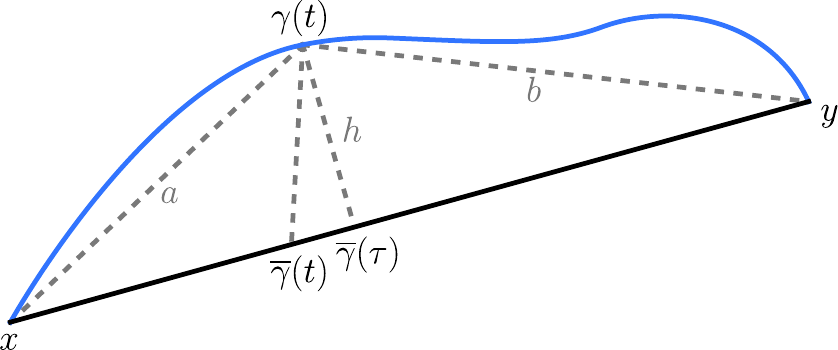}
    \caption{Comparison between the path and the straight line.}
    \label{fig:path_deviation}
\end{figure}
By Pythagoras theorem, it holds that
\begin{align*}
    h^2
    = a^2 - |\overline\gamma|^2\tau^2
    = b^2 - |\overline\gamma|^2(1-\tau)^2
    = \|\gamma(t) - \overline\gamma(t)\|^2 - |\overline\gamma|^2 (t-\tau)^2~.
\end{align*}
In particular,
\begin{align*}
    \|\gamma(t) - \overline\gamma(t)\|^2
    &= (1-t)\bLp a^2 - |\overline\gamma|^2 \tau^2 \bRp
        + t\bLp b^2 - |\overline\gamma|^2 (1-\tau)^2 \bRp
        + |\overline\gamma|^2 (t-\tau)^2\\
    &= (1-t)a^2 + tb^2
        + |\overline\gamma|^2 \bLp (t-1)\tau^2 - t + 2t\tau - t\tau^2 + t^2 - 2t\tau + \tau^2  \bRp\\
    &= (1-t)a^2 + tb^2
        - t(1-t) |\overline\gamma|^2\\
    &\le (1-t)t^2|\gamma|^2 + t(1-t)^2|\gamma|^2 - t(1-t) |\overline\gamma|^2\\
    &= t(1-t) \bLp |\gamma|^2 - |\overline\gamma|^2 \bRp~.
\end{align*}
Integrating the above then yields
\begin{align*}
    \|\gamma-\overline\gamma\|_1
    \le \|\gamma-\overline\gamma\|_2
    \le \sqrt{\int_0^1 t(1-t) \bLp|\gamma|^2 - |\overline\gamma|^2\bRp \d t} 
    = \frac{1}{\sqrt6} \sqrt{|\gamma|^2 - |\overline\gamma|^2}
\end{align*}
which concludes the proof.
\end{proof}

\begin{corollary}
\label{corol:path_deviation_conformal}
Let $x,y\in \RR^\damb$ and $\gamma, \omega \in \Gamma_M(x,y)$.
Then
\begin{align*}
    \Lv \frac{|\gamma|_f}{|\gamma|} - \frac{|\omega|_f}{|\omega|} \Rv
    \le \frac{\kappa}{\sqrt6} \Lp \sqrt{|\gamma|^2-\|x-y\|^2} + \sqrt{|\omega|^2-\|x-y\|^2} \Rp~.
\end{align*}
\end{corollary}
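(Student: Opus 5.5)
We reduce the statement to \cref{lemma:path_deviation} applied separately to $\gamma$ and to $\omega$, using the Lipschitz property of $f$ to compare each conformal length with that of the straight segment $\overline\gamma : t\mapsto(1-t)x+ty$.

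First, parameterize $\gamma$ over $[0,1]$ with constant velocity, so that $\|\dot\gamma\| = |\gamma|$ almost everywhere. Then
\begin{align*}
    \frac{|\gamma|_f}{|\gamma|}
    = \int_0^1 f(\gamma(t)) \d t~,
    \qquad
    \frac{|\overline\gamma|_f}{|\overline\gamma|}
    = \int_0^1 f(\overline\gamma(t)) \d t~,
\end{align*}
where the second identity uses the same constant-velocity parameterization of the segment, and $|\overline\gamma| = \|x-y\|$. Here we use the extension of $f$ to $\RR^\damb$ discussed after \cref{def:conformal_metric}, which remains $\kappa$-Lipschitz, so that $f$ may be evaluated on the segment even where it leaves $M$. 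The $\kappa$-Lipschitz property then gives
\begin{align*}
    \Lv \frac{|\gamma|_f}{|\gamma|} - \frac{|\overline\gamma|_f}{|\overline\gamma|} \Rv
    \le \int_0^1 \Lv f(\gamma(t)) - f(\overline\gamma(t)) \Rv \d t
    \le \kappa \|\gamma-\overline\gamma\|_1
    \le \frac{\kappa}{\sqrt6}\sqrt{|\gamma|^2-\|x-y\|^2}~,
\end{align*}
where the last inequality is \cref{lemma:path_deviation}.

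The same argument applied to $\omega$ gives
\begin{align*}
    \Lv \frac{|\omega|_f}{|\omega|} - \frac{|\overline\gamma|_f}{|\overline\gamma|} \Rv
    \le \frac{\kappa}{\sqrt6}\sqrt{|\omega|^2-\|x-y\|^2}~.
\end{align*}
The triangle inequality, with the common straight-segment term as the intermediate, then yields the claimed bound.

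The main point needing care is the degenerate case $x=y$, where $\overline\gamma$ is constant and $|\overline\gamma| = 0$, so the ratio $|\overline\gamma|_f/|\overline\gamma|$ is undefined. We handle it by using $\int_0^1 f(\overline\gamma(t)) \d t = f(x)$ as the comparison quantity directly, rather than that ratio. The two displayed estimates go through unchanged, because \cref{lemma:path_deviation} and the Lipschitz step never divide by $|\overline\gamma|$. Paths of zero length are excluded, since otherwise $|\gamma|_f/|\gamma|$ itself is undefined.
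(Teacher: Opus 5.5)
Your proof is correct and is essentially the paper's argument: constant-velocity parameterization over $[0,1]$, the $\kappa$-Lipschitz bound, and \cref{lemma:path_deviation} applied to each path with the straight segment $\overline\gamma$ as the intermediate. The only difference is where the triangle inequality is applied. The paper bounds $\int_0^1 |f(\gamma(t))-f(\omega(t))|\,\mathrm{d}t \le \kappa\|\gamma-\omega\|_1 \le \kappa\left(\|\gamma-\overline\gamma\|_1+\|\omega-\overline\gamma\|_1\right)$, so it never evaluates $f$ off $M$. Your use of the Lipschitz extension of $f$ to $\RR^\damb$ is nonetheless legitimate, and your treatment of the case $x=y$ covers a point the paper leaves unaddressed.
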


\begin{proof}
Consider parameterizations of $\gamma$ and $\omega$ over $[0,1]$ with constant velocity.
Using in order the $\kappa$-Lipschitz property of $f$, the triangular inequality of $\|\cdot\|_1$ and \cref{lemma:path_deviation} yields
\begin{align*}
    \Lv \frac{|\gamma|_f}{|\gamma|} - \frac{|\omega|_f}{|\omega|} \Rv
    &\le \int_0^1 \bv f(\gamma(t)) - f(\omega(t)) \bv \d t\\
    &\le \kappa \|\gamma-\omega\|_1\\
    &\le \frac{\kappa}{\sqrt6} \Lp \sqrt{|\gamma|^2-\|x-y\|^2} + \sqrt{|\omega|^2-\|x-y\|^2} \Rp
\end{align*}
which concludes the proof.
\end{proof}

\subsection{Regularity of Geodesics}
\label{app:geodesic_reach}

In this section we prove \cref{prop:geodesic_reach} using the results proven in \cref{app:path_length}.
Assume by contradiction that $\tau_{M,f} < \cT_{M,f}$. Then according to \cref{prop:metric_distortion_reach_characterization_conformal} there exists $0 < r < \cT_{M,f} = \frac{\tau_M}{2} \wedge \frac{\fmin}{8\kappa}$ and $x,y\in M$ such that, letting $\gamma \in \Gamma^\star_{M,f}(x,y)$ be a geodesic and $\overline\gamma = t\mapsto (1-t)x+ty$ be the straight path from $x$ to $y$, one has $|\overline\gamma| = \|x-y\| < 2r$ and
\begin{align}
    \label{eq:proof:geodesic_reach_1}
    |\gamma|
    \ge 2r\arcsin\Lp\frac{|\overline\gamma|}{2r}\Rp~.
\end{align}
Let $\omega \in \Gamma^\star_M(x,y)$ be a geodesic \ac{wrt} the induced metric and let us show that $|\omega|_f < |\gamma|_f$, which contradicts the geodesic nature of~$\gamma$. First, rewrite
\begin{align}
    \label{eq:proof:geodesic_reach_2}
    |\omega|_f - |\gamma|_f
    = \Lp\frac{|\omega|}{|\gamma|}-1\Rp |\gamma|_f + |\omega| \Lp \frac{|\omega|_f}{|\omega|} - \frac{|\gamma|_f}{|\gamma|} \Rp~.
\end{align}
The first term in the right-hand side of \cref{eq:proof:geodesic_reach_2} is negative as $|\omega| = D_M(x,y) \le |\gamma|$ hence can be upper bounded by noticing that $|\gamma|_f \ge |\gamma|\fmin$. As for the second term, it can be upper bounded according to \cref{corol:path_deviation_conformal}.
Moreover, recall that $|\overline\gamma| < 2r < \tau_M$, hence \cref{eq:metric_distortion_technical_2} may be rewritten as
\begin{align}
    \label{eq:proof:geodesic_reach_3}
    |\omega|
    \le |\overline\gamma| + \Lp\frac\pi3-1\Rp \frac{|\overline\gamma|^3}{\tau_M^2}
    \le \frac{\pi}{3} |\overline\gamma|~.
\end{align}
Combining these three inequalities yields
\begin{align}
    \label{eq:proof:geodesic_reach_4}
    |\omega|_f - |\gamma|_f
    \le \bLp|\omega|-|\gamma|\bRp \fmin + \frac{\pi\kappa}{3\sqrt6} |\overline\gamma| \Lp \sqrt{|\omega|^2-|\overline\gamma|^2} + \sqrt{|\gamma|^2-|\overline\gamma|^2} \Rp~.
\end{align}
In the following, both terms are upper bounded in a way that factors out $|\gamma|-|\overline\gamma|$.
Using in order \cref{eq:metric_distortion_technical_4}, the fact that $2r \le \tau_M$ and $t\mapsto t\arcsin(|\overline\gamma| / t)$ is non-increasing, and the initial assumption given by \cref{eq:proof:geodesic_reach_1}, it holds that
\begin{align*}
    |\omega| - |\overline\gamma|
    \le \frac14 \Lp\tau_M\arcsin\Lp\frac{|\overline\gamma|}{\tau_M}\Rp - |\overline\gamma|\Rp
    \le \frac14 \Lp2r\arcsin\Lp\frac{|\overline\gamma|}{2r}\Rp - |\overline\gamma|\Rp
    \le \frac14 \bLp|\gamma| - |\overline\gamma|\bRp
\end{align*}
and
\begin{align}
    \label{eq:proof:geodesic_reach_5}
    |\omega| - |\gamma|
    = |\omega| - |\overline\gamma| + |\overline\gamma| - |\gamma|
    \le -\frac34 \bLp|\gamma| - |\overline\gamma|\bRp~.
\end{align}
On the other hand, applying \cref{eq:trigo_lower} to $t = |\overline\gamma| / 2r$, then \cref{eq:proof:geodesic_reach_1},
\begin{align}
    \label{eq:proof:geodesic_reach_6}
    |\overline\gamma|^3
    \le 6(2r)^3 \Lp \arcsin\Lp\frac{|\overline\gamma|}{2r}\Rp - \frac{|\overline\gamma|}{2r} \Rp
    \le 24r^2 \bLp|\gamma| - |\overline\gamma|\bRp~.
\end{align}
Then, using \cref{eq:proof:geodesic_reach_3} again, then \cref{eq:proof:geodesic_reach_6},
\begin{align*}
    |\overline\gamma| \sqrt{|\omega|^2-|\overline\gamma|^2}
    &= |\overline\gamma| \sqrt{|\omega|+|\overline\gamma|} \sqrt{|\omega|-|\overline\gamma|}\\
    &\le |\overline\gamma| \sqrt{\Lp\frac\pi3+1\Rp|\overline\gamma|} \sqrt{\Lp\frac\pi3-1\Rp \frac{|\overline\gamma|^3}{\tau_M^2}}\\
    &= \sqrt{\frac{\pi^2}{9}-1} \frac{|\overline\gamma|^3}{\tau_M}\\
    &\le \sqrt{\frac{\pi^2}{9}-1} \frac{24r^2}{\tau_M} \bLp|\gamma| - |\overline\gamma|\bRp~.
\end{align*}
Hence, since $2r \le \tau_M$,
\begin{align}
    \label{eq:proof:geodesic_reach_7}
    |\overline\gamma| \sqrt{|\omega|^2-|\overline\gamma|^2}
    \le 12\sqrt{\frac{\pi^2}{9}-1}~r \bLp|\gamma| - |\overline\gamma|\bRp~.
\end{align}
Likewise, \cref{eq:proof:geodesic_reach_6} and $|\overline\gamma| \le 2r$ imply that
\begin{align}
    \label{eq:proof:geodesic_reach_8}
    |\overline\gamma| \sqrt{|\gamma|^2-|\overline\gamma|^2}
    = \sqrt{2|\overline\gamma|^3 + |\overline\gamma|^2\bLp|\gamma|-|\overline\gamma|\bRp} \sqrt{|\gamma|-|\overline\gamma|}
    \le \sqrt{48r^2 + (2r)^2} \bLp|\gamma|-|\overline\gamma|\bRp~.
\end{align}
Finally, combining \cref{eq:proof:geodesic_reach_4,eq:proof:geodesic_reach_5,eq:proof:geodesic_reach_7,eq:proof:geodesic_reach_8} yields
\begin{align*}
    |\omega|_f - |\gamma|_f
    \le \Lp -\frac34 \fmin + \frac{\pi}{3\sqrt6} \Lp12\sqrt{\frac{\pi^2}{9}-1} + 2\sqrt{13}\Rp \kappa r \Rp \bLp|\gamma| - |\overline\gamma|\bRp~.
\end{align*}
By assumption,
\begin{align*}
    r
    < \frac{\fmin}{8\kappa}
    \le \Lp12\sqrt{\frac{\pi^2}{9}-1} + 2\sqrt{13}\Rp^{-1} \frac{9\sqrt6}{4\pi} \frac{\fmin}{\kappa}~,
\end{align*}
which concludes that $|\omega|_f - |\gamma|_f$ is negative, hence the contradiction and \cref{prop:geodesic_reach} is proven. The choice of the constant $\frac18$ instead of the one appearing in the right-hand side of the last inequality is purely for cosmetic reasons.

\subsection{Weight Function}
\label{app:weight_props}

In this section we state the Lipschitz property of the weight function defined in \cref{def:weight_function} \ac{wrt} the function $f$ and the endpoints $(x,y)$. The latter is inherited from the Lipschitz nature of $f$. We also quantify the convergence of $w_{f,q}$ to $w_{f,\infty}$ as $q\to\infty$ and then prove \cref{prop:weight_distortion}.
The following three lemmas are obtained through elementary computations that are not detailed here.
Recall that
\begin{align*}
    w_{f,q}(x,y)
    = \frac{\|x-y\|}{2(q-1)} \Lp f(x) + 2\sum_{k=2}^{q-1} f\Lp\frac{q-k}{q-1}x + \frac{k-1}{q-1}y\Rp + f(y) \Rp
\end{align*}
and
\begin{align*}
    w_{f,\infty}(x,y)
    = \|x-y\| \int_0^1 f\bLp(1-t)x+ty\bRp \d t~.
\end{align*}

\begin{lemma}
\label{lemma:weight_lipschitz_endpoints}
Let $f:\RR^\damb \to \RR_+^*$ be a $\kappa$-Lipschitz function and $q\in\{2,\dots,\infty\}$. Then the weight function $w_{f,q}$ satisfies for all $x,y,x',y'\in M$
\begin{align*}
    \Lv\frac{w_{f,q}(x,y)}{\|x-y\|}-\frac{w_{f,q}(x',y')}{\|x'-y'\|}\Rv
    \le \kappa\frac{\|x-x'\| + \|y-y'\|}{2}~.
\end{align*}
\end{lemma}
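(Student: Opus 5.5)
The plan is to write $w_{f,q}(x,y)/\|x-y\|$ as an average of values of $f$ along the segment, and then compare the two segments point by point. For finite $q$, set $t_k = \frac{k-1}{q-1}$ for $k=1,\dots,q$. The definition then reads
\begin{equation*}
    \frac{w_{f,q}(x,y)}{\|x-y\|} = \sum_{k=1}^{q} \lambda_k\, f\bLp(1-t_k)x + t_k y\bRp~,
\end{equation*}
with trapezoidal weights $\lambda_1=\lambda_q=\frac{1}{2(q-1)}$ and $\lambda_k = \frac{1}{q-1}$ otherwise. These weights are nonnegative and sum to $1$. The same expression with $(x',y')$ and the same nodes $t_k$ gives the second ratio. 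For $q=\infty$, the ratio is $\int_0^1 f((1-t)x+ty)\d t$, an average against the uniform probability on $[0,1]$. In every case the ratio is the integral of $t\mapsto f((1-t)x+ty)$ against a probability measure $\nu_q$ on $[0,1]$ that does not depend on the endpoints.

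Subtracting and using the $\kappa$-Lipschitz property of $f$ at each node gives
\begin{equation*}
    \Lv\frac{w_{f,q}(x,y)}{\|x-y\|}-\frac{w_{f,q}(x',y')}{\|x'-y'\|}\Rv
    \le \kappa \int_0^1 \bLp(1-t)\|x-x'\| + t\|y-y'\|\bRp \d\nu_q(t)~,
\end{equation*}
since $\|(1-t)(x-x') + t(y-y')\| \le (1-t)\|x-x'\| + t\|y-y'\|$.

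It remains to check that $\int t \d\nu_q = \int (1-t)\d\nu_q = \frac12$. This holds because $\nu_q$ is symmetric under $t\mapsto 1-t$: for the trapezoidal weights, $\lambda_k = \lambda_{q+1-k}$ and $t_{q+1-k} = 1-t_k$, and the uniform measure is clearly symmetric. Substituting yields exactly $\kappa\frac{\|x-x'\|+\|y-y'\|}{2}$.

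I expect no real obstacle here. The only points needing care are the bookkeeping of the endpoint weights, checking that they sum to $1$ and are symmetric, and the degenerate case $x=y$ or $x'=y'$. In that case the ratio is understood through the same average formula, or the statement is restricted to distinct endpoints.
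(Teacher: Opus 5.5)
Your proof is correct: $w_{f,q}(x,y)/\|x-y\|$ is the integral of $t\mapsto f((1-t)x+ty)$ against a probability measure on $[0,1]$ that does not depend on the endpoints and is symmetric under $t\mapsto 1-t$ (trapezoidal weights for finite $q$, Lebesgue measure for $q=\infty$), so the pointwise Lipschitz bound together with $\int t\,\nu_q(\mathrm{d}t)=\frac12$ gives exactly the constant $\frac{\kappa}{2}$. The paper does not write out a proof, calling it an elementary computation, and your argument is the natural way to fill it in; note that it only uses that $f$ is $\kappa$-Lipschitz on $\RR^\damb$, not that $x,y,x',y'\in M$.
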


\begin{lemma}
\label{lemma:weight_lipschitz_f}
Let $f, g : \RR^\damb \to \RR_+^*$ be two functions and $q\in\{2,\dots,\infty\}$. Then for all $x,y\in M$
\begin{align*}
    \bv w_{f,q}(x,y) - w_{g,q}(x,y) \bv
    \le \|x-x\| \|f-g\|_\infty~.
\end{align*}
\end{lemma}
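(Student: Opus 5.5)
The statement is \cref{lemma:weight_lipschitz_f}. The factor $\|x-x\|$ is evidently a typo for $\|x-y\|$, and I will prove $|w_{f,q}(x,y)-w_{g,q}(x,y)|\le\|x-y\|\,\|f-g\|_\infty$. The key observation is that for fixed endpoints $x,y$ the map $f\mapsto w_{f,q}(x,y)$ is linear in $f$. So $w_{f,q}(x,y)-w_{g,q}(x,y)=w_{f-g,q}(x,y)$, where $w_{h,q}$ is defined by the same formula for any real function $h$, not necessarily positive. It therefore suffices to show $|w_{h,q}(x,y)|\le\|x-y\|\,\|h\|_\infty$.

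\textbf{Finite resolution.} For $q<\infty$, write $w_{h,q}(x,y)=\|x-y\|\sum_{j=1}^{q}\lambda_j h(z_j)$. Here the $z_j$ are the interpolation points $\frac{q-k}{q-1}x+\frac{k-1}{q-1}y$ for $k=1,\dots,q$. The coefficients are $\lambda_1=\lambda_q=\frac{1}{2(q-1)}$ and $\lambda_j=\frac{1}{q-1}$ otherwise. These coefficients are nonnegative, and their sum is $\frac{2}{2(q-1)}+\frac{q-2}{q-1}=1$. The weight is thus $\|x-y\|$ times a convex combination of values of $h$, and the triangle inequality bounds its absolute value by $\|x-y\|\,\|h\|_\infty$.

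\textbf{Infinite resolution.} For $q=\infty$, the integral $\int_0^1 h((1-t)x+ty)\,\mathrm dt$ is an average of $h$ against a probability measure on $[0,1]$. Its absolute value is therefore at most $\|h\|_\infty$.

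There is no real obstacle. The only points needing care are recognising the typo and checking that the trapezoidal coefficients sum to one. Nothing about $M$, $\kappa$ or positivity is used.
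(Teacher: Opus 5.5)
Your proof is correct and is exactly the elementary computation the paper leaves undetailed: $f\mapsto w_{f,q}(x,y)$ is linear, and the trapezoidal coefficients (resp.\ the integral over $[0,1]$) form a probability weighting, so the difference is bounded by $\|x-y\|\,\|f-g\|_\infty$. Your reading of $\|x-x\|$ as $\|x-y\|$ is also right; it matches how the lemma is applied in \cref{eq:proof:estimator_convergence_both_errors_1}.
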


\begin{lemma}
\label{lemma:weight_q_difference}
Let $f:\RR^\damb \to \RR_+^*$ be a $\kappa$-Lipschitz function and $q\ge2$ an integer. Then for all $x,y\in M$
\begin{align*}
    \Lv w_{f,q}(x,y) - w_{f,\infty}(x,y) \Rv
    &\le \frac{\kappa}{4}\frac{\|x-y\|^2}{q-1}~.
\end{align*}
\end{lemma}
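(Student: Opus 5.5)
The plan is to reduce the statement to the classical error bound of the composite trapezoidal rule for a Lipschitz function of one variable, and to do the per-subinterval estimate sharply enough to get the constant $\frac14$. Let $L=\|x-y\|$ and define $g(t)=f\bigl((1-t)x+ty\bigr)$ for $t\in[0,1]$. Since $f$ is $\kappa$-Lipschitz, $g$ is $\kappa L$-Lipschitz on $[0,1]$. Set $h=\frac{1}{q-1}$ and $t_j=jh$ for $j=0,\dots,q-1$. With these notations, the point $\frac{q-k}{q-1}x+\frac{k-1}{q-1}y$ is exactly the image of $t_{k-1}$. Hence
\begin{equation*}
    w_{f,q}(x,y) = L\sum_{j=0}^{q-2} h\,\frac{g(t_j)+g(t_{j+1})}{2}
    \eqand
    w_{f,\infty}(x,y) = L\sum_{j=0}^{q-2}\int_{t_j}^{t_{j+1}} g(t)\d t~.
\end{equation*}
By the triangle inequality it then suffices to show that, on any interval $[a,b]$ of length $h$,
\begin{equation*}
    \Lv \int_a^b g(t)\d t - h\,\frac{g(a)+g(b)}{2}\Rv \le \frac{\kappa L h^2}{4}~.
\end{equation*}
Summing this over the $q-1$ intervals gives $(q-1)\frac{\kappa L h^2}{4}=\frac{\kappa L}{4(q-1)}$. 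Multiplying by $L$ yields the claimed $\frac{\kappa}{4}\frac{\|x-y\|^2}{q-1}$.

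For the local estimate, let $m=\frac{g(a)+g(b)}{2}$ and $\delta=\frac{g(a)-g(b)}{2}$, so that $|\delta|\le \frac{\kappa L h}{2}$ by the Lipschitz property. Write $\varphi=g-m$. Using the Lipschitz bound from each endpoint gives $\varphi(t)\le \min\bigl(\delta+\kappa L(t-a),\,-\delta+\kappa L(b-t)\bigr)$, and symmetrically $\varphi(t)\ge \max\bigl(\delta-\kappa L(t-a),\,-\delta-\kappa L(b-t)\bigr)$. The upper envelope is a tent function: it is piecewise linear with slopes $\pm\kappa L$, it equals $\delta$ at $a$ and $-\delta$ at $b$, and its two linear pieces cross at an interior point. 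Integrating it is elementary. The two linear pieces together give $\frac{\kappa L h^2}{4}-\frac{\delta^2}{\kappa L}\le\frac{\kappa L h^2}{4}$, and the linear parts $\pm\delta$ cancel by symmetry. The lower envelope is the negative of the same kind of tent with $\delta$ replaced by $-\delta$, so the same computation gives $\int_a^b\varphi\ge -\frac{\kappa L h^2}{4}$. Combining the two bounds gives the local estimate.

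The only real obstacle is the constant. A naive pointwise bound such as $|\varphi(t)|\le \frac{\kappa L h}{2}$ only yields $\frac{\kappa L h^2}{2}$ per interval, which is off by a factor $2$. The gain comes from integrating the tent-shaped envelope rather than bounding $\varphi$ uniformly. I would carry out that integration explicitly by locating the crossing point $t^\ast=\frac{a+b}{2}-\frac{\delta}{\kappa L}$, which lies in $[a,b]$ because $|\delta|\le\frac{\kappa L h}{2}$.
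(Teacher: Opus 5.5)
Your proof is correct. The paper omits this proof as ``elementary computations'', and your argument supplies exactly that computation: you read $w_{f,q}$ as the composite trapezoidal rule for $g(t)=f((1-t)x+ty)$ and bound each panel by $\kappa\|x-y\|h^2/4$ by integrating the tent-shaped Lipschitz envelopes. This is the natural route, it is sharp, and it is consistent with the paper's remark that $w_{f,q}$ is the optimal $q$-sample approximation of $w_{f,\infty}$.

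One wording fix is needed when you write it out. The constant offsets $\pm\delta$ do not cancel. With $u=t^\ast-a$, $v=b-t^\ast$ and your $L=\|x-y\|$, they contribute $\delta(u-v)=-2\delta^2/(\kappa L)$. It is this term that turns the slope contribution $\kappa L(u^2+v^2)/2=\kappa Lh^2/4+\delta^2/(\kappa L)$ into your (correct) total $\kappa Lh^2/4-\delta^2/(\kappa L)$. Also set aside the trivial case $\kappa L=0$ separately, since your formulas divide by it there and $g$ is then constant.
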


\Cref{lemma:weight_distortion_infty} below states the upper bound on the distortion of the weight function when $q=\infty$. The case of finite $q$ is then deduced from \cref{lemma:weight_q_difference}.
Indeed, according to \cref{lemma:weight_distortion_infty,lemma:weight_q_difference}, for all $\|x-y\| \le \tau_{M,f}$ one has
\begin{align*}
    \metricratioOld{w_{f,q}(x,y)}{D_{M,f}(x,y)}
    &\le \Lv\frac{w_{f,\infty}(x,y) - w_{f,q}(x,y)}{D_{M,f}(x,y)}\Rv + \metricratioOld{w_{f,\infty}(x,y)}{D_{M,f}(x,y)}\\
    &\le \frac{\kappa}{4}\frac{\|x-y\|^2}{(q-1)D_{M,f}(x,y)} + \frac{\|x-y\|^2}{16\cT_{M,f}^2}\\
    &\le \frac{\kappa}{4\fmin}\frac{\|x-y\|}{(q-1)} + \frac{\|x-y\|^2}{16\cT_{M,f}^2}
\end{align*}
since $D_{M,f}(x,y) \ge \fmin \|x-y\|$. This concludes \cref{prop:weight_distortion}.

\begin{lemma}
\label{lemma:weight_distortion_infty}
Let $x,y\in M$ such that $\|x-y\| \le \tau_{M,f}$. Then
\begin{align}
    \metricratioOld{w_{f,\infty}(x,y)}{D_{M,f}(x,y)}
    \le \frac{\|x-y\|^2}{16\cT_{M,f}^2}~.
\end{align}
\end{lemma}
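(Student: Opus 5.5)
We prove the two one-sided bounds separately. Fix $x,y\in M$ with $\|x-y\|\le\tau_{M,f}$, a conformal geodesic $\gamma\in\Gamma^\star_{M,f}(x,y)$, and the straight path $\overline\gamma(t)=(1-t)x+ty$. Then $w_{f,\infty}(x,y)=|\overline\gamma|_f$ and $D_{M,f}(x,y)=|\gamma|_f$. The two quantities are compared through the decomposition used in \cref{eq:proof:geodesic_reach_2}:
\begin{align*}
    |\overline\gamma|_f - |\gamma|_f
    = \Lp\frac{|\overline\gamma|}{|\gamma|}-1\Rp |\gamma|_f + |\overline\gamma| \Lp \frac{|\overline\gamma|_f}{|\overline\gamma|} - \frac{|\gamma|_f}{|\gamma|} \Rp~.
\end{align*}
Dividing by $|\gamma|_f\ge \fmin|\gamma|$ gives
\begin{align*}
    \Lv 1-\frac{w_{f,\infty}(x,y)}{D_{M,f}(x,y)}\Rv
    \le \Lp1-\frac{|\overline\gamma|}{|\gamma|}\Rp + \frac{1}{\fmin}\Lv \frac{|\overline\gamma|_f}{|\overline\gamma|} - \frac{|\gamma|_f}{|\gamma|} \Rv~,
\end{align*}
where we used $|\overline\gamma|\le|\gamma|$. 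It therefore suffices to show that each term is $\cO(\|x-y\|^2/\cT_{M,f}^2)$ with constants summing to at most $1/16$.

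For the first term, $\gamma$ has reach at least $\tau_{M,f}\ge\cT_{M,f}$ by \cref{prop:geodesic_reach}. Applying \cref{eq:metric_distortion_technical_3} to the curve $\gamma$ (as allowed by the remark after \cref{lemma:metric_distortion_technical}, using $\|x-y\|\le\tau_\gamma$) gives
\[
    1-\frac{|\overline\gamma|}{|\gamma|}\le\Lp1-\frac3\pi\Rp\frac{\|x-y\|^2}{\tau_\gamma^2}~.
\]
For the second term, apply \cref{corol:path_deviation_conformal} with $\omega=\overline\gamma$. The contribution of $\overline\gamma$ vanishes, so the term is at most
\[
    \frac{\kappa}{\sqrt6\,\fmin}\sqrt{|\gamma|^2-\|x-y\|^2}~.
\]
By \cref{eq:metric_distortion_technical_2} applied to $\gamma$, we have $|\gamma|\le\frac\pi3\|x-y\|$ and $|\gamma|-\|x-y\|\le(\frac\pi3-1)\|x-y\|^3/\tau_\gamma^2$. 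Hence $\sqrt{|\gamma|^2-\|x-y\|^2}\le\sqrt{\frac{\pi^2}{9}-1}\,\|x-y\|^2/\tau_\gamma$, the same computation as the one leading to \cref{eq:proof:geodesic_reach_7}. Using $\kappa/\fmin\le 1/(8\cT_{M,f})$ and $\tau_\gamma\ge\cT_{M,f}$, the second term is bounded by $\frac{1}{8\sqrt6}\sqrt{\frac{\pi^2}{9}-1}\,\|x-y\|^2/\cT_{M,f}^2$.

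The main obstacle is the constant. The crude sum $(1-\frac3\pi)+\frac{1}{8\sqrt6}\sqrt{\pi^2/9-1}\approx0.045+0.016$ is about $0.061$, which is slightly under $1/16=0.0625$, so it should just fit. If it turns out too tight, I would first sharpen the first term: replace $\tau_\gamma$ by $\tau_{M,f}$ and use $\|x-y\|\le\tau_{M,f}$ to apply a tighter arcsin bound from \cref{eq:trigo}, for instance $\arcsin(t)-t\le\arcsin(t)^3/6$, which gives roughly $\|x-y\|^2/(24\tau_\gamma^2)$. I also need to check that $\|x-y\|\le\tau_\gamma$ indeed holds, so that \cref{lemma:metric_distortion_technical} applies to $\gamma$ itself; this follows since $\tau_\gamma\ge\tau_{M,f}\ge\|x-y\|$.
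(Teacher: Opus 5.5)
Your proof is correct and follows the paper's argument. It uses the same decomposition of $D_{M,f}(x,y)-w_{f,\infty}(x,y)$, the bound \cref{eq:metric_distortion_technical_3} applied to $D_\gamma$ for the first term, \cref{corol:path_deviation_conformal} with the straight path for the second, and finally $\tau_\gamma\ge\cT_{M,f}$ and $\kappa/\fmin\le 1/(8\cT_{M,f})$. The one small difference is that you bound $\sqrt{|\gamma|^2-\|x-y\|^2}$ via \cref{eq:metric_distortion_technical_2}, with constant $\sqrt{\pi^2/9-1}/\sqrt6$, whereas the paper reuses the first-term bound, with constant $\sqrt{1/3-1/\pi}$; this gives a total of about $0.0609$ instead of $0.0604$, both below $1/16$, so the fallback you sketch is not needed.
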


\begin{proof}
Let $\gamma\in\Gamma_{M,f}^\star(x,y)$ be a geodesic and $\overline\gamma$ be the straight path from $x$ to $y$. By definition $D_{M,f}(x,y) = |\gamma|_f$, $w_{f,\infty}(x,y) = |\overline\gamma|_f$ and $|\overline\gamma| = \|x-y\|$.
First rewrite
\begin{align}
    \label{eq:proof:weight_distortion_infty_1}
    D_{M,f}(x,y) - w_{f,\infty}(x,y)
    = \Lp1-\frac{|\overline\gamma|}{|\gamma|}\Rp |\gamma|_f + |\overline\gamma| \Lp \frac{|\gamma|_f}{|\gamma|} - \frac{|\overline\gamma|_f}{|\overline\gamma|} \Rp~.
\end{align}
Regarding the first term, as $\|x-y\| \le \tau_{M,f} \le \tau_\gamma$, applying \cref{eq:metric_distortion_technical_3} to the induced metric~$D_\gamma$ yields
\begin{align}
    \label{eq:proof:weight_distortion_infty_2}
    0
    \le 1-\frac{|\overline\gamma|}{|\gamma|}
    = 1-\frac{\|x-y\|}{D_\gamma(x,y)}
    \le \Lp1-\frac3\pi\Rp \frac{\|x-y\|^2}{\tau_\gamma^2}~.
\end{align}
As for the second term, applying \cref{corol:path_deviation_conformal} yields
\begin{align*}
    \Lv \frac{|\gamma|_f}{|\gamma|} - \frac{|\overline\gamma|_f}{|\overline\gamma|} \Rv
    \le \frac{\kappa}{\sqrt6} \sqrt{|\gamma|^2-|\overline\gamma|^2}
    = \frac{\kappa}{\sqrt6} |\gamma| \sqrt{1+\frac{|\overline\gamma|}{|\gamma|}} \sqrt{1-\frac{|\overline\gamma|}{|\gamma|}}
\end{align*}
and using the fact that $|\gamma|_f \ge |\gamma|\fmin$ and $|\overline\gamma| \le |\gamma|$ along with \cref{eq:proof:weight_distortion_infty_2} further implies that
\begin{align}
    \label{eq:proof:weight_distortion_infty_3}
    \Lv \frac{|\gamma|_f}{|\gamma|} - \frac{|\overline\gamma|_f}{|\overline\gamma|} \Rv
    \le \frac{\kappa}{\sqrt6} \frac{|\gamma|_f}{\fmin} \sqrt{2} \sqrt{\Lp1-\frac3\pi\Rp \frac{\|x-y\|^2}{\tau_\gamma^2}}~.
\end{align}
Finally, combining \cref{eq:proof:weight_distortion_infty_1,eq:proof:weight_distortion_infty_2,eq:proof:weight_distortion_infty_3} together with the fact that $\tau_\gamma \ge \cT_{M,f}$ and $\frac{\fmin}{8\kappa} \ge \cT_{M,f}$ yields
\begin{align*}
    \metricratioOld{w_{f,\infty}(x,y)}{D_{M,f}(x,y)}
    &\le \Lp \Lp1-\frac3\pi\Rp\frac{1}{\tau_\gamma^2} + \sqrt{\frac13-\frac1\pi}\frac{\kappa}{\fmin\tau_\gamma} \Rp \|x-y\|^2\\
    &\le \Lp 1-\frac3\pi + \frac18\sqrt{\frac13-\frac1\pi} \Rp \frac{\|x-y\|^2}{\cT_{M,f}^2}\\
    &\le \frac{\|x-y\|^2}{16\cT_{M,f}^2}
\end{align*}
where the last inequality is purely cosmetic and concludes the proof.
\end{proof}

\subsection{Discrete Approximation of the Conformal Metric}
\label{app:proof:geodesic_approx}

In this section we prove \cref{thm:geodesic_approx}.
Let $X \subset M$ be a point cloud and denote $\rho = \hausdorff(M,X)$. Assume that $4\rho \le r \le \cT_{M,f}$ and let $x,y\in M$.

\proofsubparagraph{Lower bound}
Let $(x_0, \dots, x_K)$ be a polygonal path in $G_r(X\cup\{x,y\})$ from $x=x_0$ to $y=x_K$ achieving the infimum in \cref{def:estimator_conform}.
According to \cref{prop:weight_distortion},
\begin{align*}
    \widehat D_{X,f}(x,y)
    = \sum_{k=1}^{K-1} w_{f,q}(x_k, x_{k+1})
    \ge \sum_{k=1}^{K-1} \bLp 1 - \delta_q(\|x_k-x_{k+1}\|) \bRp D_{M,f}(x_k, x_{k+1})~.
\end{align*}
Using the fact that $\delta_q$ is non-decreasing and that $\|x_k-x_{k+1}\| \le r$ for all $k$, along with the triangular inequality of $D_{M,f}$, it follows that
\begin{align}
    \label{eq:proof:geodesic_approx_1}
    \widehat D_{X,f}(x,y)
    \ge \bLp 1 - \delta_q(r) \bRp D_{M,f}(x,y)~.
\end{align}

\proofsubparagraph{Local upper bound}
If $\|x-y\| \le r$, the edge $(x,y)$ belongs to $G_r(X\cup\{x,y\})$, hence according to \cref{prop:weight_distortion},
\begin{align}
    \label{eq:proof:geodesic_approx_2}
    \widehat D_{X,f}(x,y)
    \le w_{f,q}(x,y)
    \le \bLp 1 + \delta_q(r) \bRp D_{M,f}(x,y)~.
\end{align}

\proofsubparagraph{Global upper bound}
Assume that $\|x-y\| \ge r$. Let $\gamma \in \Gamma_{M,f}^\star(x,y)$ be a 
geodesic and decompose it into $K = \lceil|\gamma|/(r-2\rho)\rceil$ sections of equal arc-length $l = |\gamma|/K$. Denote $(x=x_0, x_1, \dots, x_K=y)$ the intermediate points resulting from this decomposition and $(x=x'_0, x'_1, \dots, x'_K=y)$ their respective nearest neighbors in the point cloud $X\cup\{x,y\}$.
Then
\begin{align*}
    1 \le \frac{|\gamma|}{r-2\rho} \le K \le \frac{2|\gamma|}{r-2\rho}
    \eqand
    \frac{r}{4} \le \frac{r}{2}-\rho \le l \le r-2\rho~.
\end{align*}
In particular, $\|x'_k-x'_{k+1}\| \le \|x'_k-x_k\| + \|x_k-x_{k+1}\| + \|x_{k+1}-x'_{k+1}\| \le \rho + l + \rho \le r$ so that $(x'_0,\dots,x'_K)$ is indeed a path in $G_r(X\cup\{x,y\})$.
Denote for short
\begin{align*}
    &\epsilon_k = x'_k-x_k~,
        && w_k = w_{f,q}(x_k, x_{k+1})~,
        && w'_k = w_{f,q}(x'_k, x'_{k+1})~,\\
    & u_k = \frac{x_k-x_{k+1}}{\|x_k-x_{k+1}\|}~,
        && \overline w_k = \frac{w_{f,q}(x_k, x_{k+1})}{\|x_k-x_{k+1}\|}~,
        && \overline w'_k = \frac{w_{f,q}(x'_k, x'_{k+1})}{\|x'_k-x'_{k+1}\|}~.
\end{align*}
Let us show that the weight $\sum_k w_k'$ of the polygonal path is not much greater than the conformal distance $D_{M,f}(x,y)$. First, rewrite
\begin{align}
    \label{eq:proof:geodesic_approx_3}
    w'_k
    = w_k + \|x'_k-x'_{k+1}\| \Lp\overline w'_k-\overline w_k\Rp + \bLp\|x'_k-x'_{k+1}\| - \|x_k-x_{k+1}\|\bRp \overline w_k~.
\end{align}
In order to upper bound the last term in \cref{eq:proof:geodesic_approx_3}, we use the following inequality from the proof of Theorem 2.1 in \cite{aaronConvergenceRatesEstimators2018}:
\begin{align*}
    \|x'_k-x'_{k+1}\|
    \le \|x_k-x_{k+1}\| + \langle u_k, \epsilon_k-\epsilon_{k+1}\rangle + \frac12\frac{\|\epsilon_k-\epsilon_{k+1}\|^2}{\|x_k-x_{k+1}\|}~,
\end{align*}
which implies that
\begin{align*}
    \bLp\|x'_k-x'_{k+1}\| - \|x_k-x_{k+1}\|\bRp \overline w_k
    \le \langle \overline w_k u_k, \epsilon_k-\epsilon_{k+1} \rangle + \frac{w_k}{2} \frac{\|\epsilon_k-\epsilon_{k+1}\|^2}{\|x_k-x_{k+1}\|^2}
\end{align*}
and from \cref{eq:proof:geodesic_approx_3} it follows that
\begin{align}
    \label{eq:proof:geodesic_approx_4}
    w'_k
    \le 
    \underbrace{w_k}_{\textrm{(i)}}
    + \underbrace{\|x'_k-x'_{k+1}\| \Lp\overline w'_k-\overline w_k\Rp}_{\textrm{(ii)}}
    + \underbrace{\langle \overline w_k u_k, \epsilon_k-\epsilon_{k+1} \rangle}_{\textrm{(iii)}}
    + \underbrace{\frac{w_k}{2} \frac{\|\epsilon_k-\epsilon_{k+1}\|^2}{\|x_k-x_{k+1}\|^2}}_{\textrm{(iv)}}~.
\end{align}
Let us now upper bound all four terms in this upper bound.
\begin{enumerate}[(i)]
    \item \Cref{prop:weight_distortion} ensures that
    \begin{align}
        \label{eq:proof:geodesic_approx_5}
        w_k
        \le \bLp 1 + \delta_q(\|x_k-x_{k+1}\|) \bRp D_{M,f}(x_k, x_{k+1})
        \le \bLp 1 + \delta_q(r) \bRp D_{M,f}(x_k, x_{k+1})~.
    \end{align}
    \item \Cref{lemma:weight_lipschitz_endpoints} ensures that
    \begin{align*}
        \|x'_k-x'_{k+1}\| \Lp\overline w'_k-\overline w_k\Rp
        &\le \|x'_k-x'_{k+1}\| \frac{\kappa}{2}\Lp \|x'_k-x_k\| + \|x'_{k+1}-x_{k+1}\| \Rp\\
        &\le (l+2\rho) \kappa\rho~,
    \end{align*}
    which implies since $\rho \le \frac{r}{4} \le l$ and $D_{M,f}(x_k, x_{k+1}) \ge l\fmin$ that
    \begin{align}
        \label{eq:proof:geodesic_approx_6}
        \|x'_k-x'_{k+1}\| \Lp\overline w'_k-\overline w_k\Rp
        \le 3\frac{\kappa}{\fmin}\rho~D_{M,f}(x_k, x_{k+1})~.
    \end{align}
    \item Recall that $\epsilon_0 = \epsilon_K = 0$. Then when summing over $k$, the terms can be rearranged so that
    \begin{align*}
        \sum_{k=0}^{K-1} \langle \overline w_k u_k, \epsilon_k-\epsilon_{k+1} \rangle
        &= \sum_{k=1}^{K-1} \Lan \overline w_k u_k - \overline w_{k-1} u_{k-1}, \epsilon_k \Ran\\
        &= \sum_{k=1}^{K-1} \bLp \overline w_k \Lan u_k - u_{k-1}, \epsilon_k \Ran + (\overline w_k - \overline w_{k-1}) \langle u_{k-1}, \epsilon_k \rangle \bRp~.
    \end{align*}
    According to \cref{lemma:path_direction_lipschitz}, $\|u_k-u_{k+1}\| \le \frac{1}{\tau_\gamma} \|x_k-x_{k+1}\|$ since $r-2\rho \le \cT_{M,f} \le \tau_\gamma$.
    According to \cref{lemma:weight_lipschitz_endpoints}, $\overline w_k - \overline w_{k-1} \le \frac{\kappa}{2} \bLp\|x_k-x_{k+1}\|+\|x_{k+1}-x_{k+2}\|\bRp \le \kappa l$.
    Then, using Cauchy-Schwartz inequality and $D_{M,f}(x_k, x_{k+1}) \ge l\fmin$,
    \begin{align*}
        \overline w_k \Lan u_k - u_{k-1}, \epsilon_k \Ran + (\overline w_k - \overline w_{k-1}) \langle u_{k-1}, \epsilon_k \rangle
        &\le \frac{\overline w_k}{\tau_\gamma}\|x_k-x_{k+1}\|\rho + \kappa l\rho\\
        &\le \frac{w_k}{\tau_\gamma}\rho + \frac{\kappa}{\fmin} D_{M,f}(x_k, x_{k+1})\rho~.
    \end{align*}
    Adding the analogous term corresponding to $k=0$,
    \begin{align}
        \label{eq:proof:geodesic_approx_7}
        \sum_{k=0}^{K-1} \langle \overline w_k u_k, \epsilon_k-\epsilon_{k+1} \rangle
        \le \sum_{k=0}^{K-1} \Lp \frac{w_k}{\tau_\gamma} + \frac{\kappa}{\fmin} D_{M,f}(x_k, x_{k+1}) \Rp \rho~.
    \end{align}
    \item Since $\|x_k-x_{k+1}\| \le D_\gamma(x_k,x_{k+1}) = l \le \tau_\gamma$ and $r \le 4l$, applying \cref{eq:metric_distortion_technical_1} to the metric~$D_\gamma$ yields
    \begin{align*}
        \|x_k-x_{k+1}\|
        \ge l - \frac{l^3}{24\tau_\gamma^2}
        \ge \frac{23}{24} l
        \ge \frac{23}{96} r~,
    \end{align*}
    hence
    \begin{align}
        \label{eq:proof:geodesic_approx_8}
        \frac{w_k}{2} \frac{\|\epsilon_k-\epsilon_{k+1}\|^2}{\|x_k-x_{k+1}\|^2}
        \le \frac{w_k}{2} \Lp\frac{96}{23}\Rp^2 \frac{4\rho^2}{r^2}~.
    \end{align}
\end{enumerate}
Finally, since $(x_0,\dots,x_K)$ are intermediate points on the geodesic $\gamma$ from $x$ to $y$, $D_{M,f}(x,y) = \sum_{k=0}^{K-1} D_{M,f}(x_k,x_{k+1})$. Together with \cref{eq:proof:geodesic_approx_4,eq:proof:geodesic_approx_5,eq:proof:geodesic_approx_6,eq:proof:geodesic_approx_7,eq:proof:geodesic_approx_8}, this yields
\begin{align*}
    \sum_{k=0}^{K-1} w'_k
    \le \Lp 1 + \delta_q(r) + 4\frac{\kappa}{\fmin}\rho \Rp D_{M,f}(x,y) + \Lp \frac{\rho}{\tau_\gamma} + 2\Lp\frac{96}{23}\Rp^2\frac{\rho^2}{r^2} \Rp \sum_{k=0}^{K-1} w_k
\end{align*}
In particular, recall that $\frac{\kappa}{\fmin} \le \frac{1}{8\cT_{M,f}}$ and $r\le\cT_{M,f}$, hence $\delta_q(r) \le \frac{9}{64}$ and \cref{eq:proof:geodesic_approx_5} further implies that $w_k \le \frac{73}{64} D_{M,f}(x_k, x_{k+1})$, so that
\begin{align*}
    \widehat D_{X,f}(x,y)
    \le \sum_{k=0}^{K-1} w'_k
    \le \Lp1 + \delta_q(r) + \frac{105}{64}\frac{\rho}{\cT_{M,f}} + \frac{73}{32}\Lp\frac{96}{23}\Rp^2\frac{\rho^2}{r^2}\Rp D_{M,f}(x,y)~.
\end{align*}
Upper bounding $\frac{105}{64} \le 2$ and $\frac{73}{32}\Lp\frac{96}{23}\Rp^2 \le 40$ to get cleaner constants, it finally holds that
\begin{align}
    \label{eq:proof:geodesic_approx_9}
    \widehat D_{X,f}(x,y)
    \le \Lp1 + \delta_q(r) + 2\frac{\rho}{\cT_{M,f}} + 40\frac{\rho^2}{r^2}\Rp D_{M,f}(x,y)~.
\end{align}

\proofsubparagraph{Conclusion}
According to the lower bound \cref{eq:proof:geodesic_approx_1} and to the upper bounds \cref{eq:proof:geodesic_approx_2,eq:proof:geodesic_approx_9} depending on the case, the distortion between the polygonal metric and the original metric is upper bounded as
\begin{align*}
    l_{\infty,M}\bLp\widehat D_{X,f}| D_{M,f}\bRp
    &\le \delta_q(r) + 2\frac{\rho}{\cT_{M,f}} + 40\frac{\rho^2}{r^2}\\
    &= \frac{\kappa}{4\fmin}\frac{r}{q-1} + \frac{r^2}{16\cT_{M,f}^2} + 2\cdot\frac{r}{4\cT_{M,f}}\cdot\frac{4\rho}{r} + 40\frac{\rho^2}{r^2}\\
    &\le \frac{1}{32\cT_{M,f}}\frac{r}{q-1} + \frac{r^2}{8\cT_{M,f}^2} + 56\frac{\rho^2}{r^2}
\end{align*}
where we used the inequality $2ab \le a^2+b^2$. This concludes the proof of \cref{thm:geodesic_approx}.

\subsection{Approximation of the Conformal Factor}
\label{app:proof:estimator_convergence_both_errors}

In this section we prove \cref{lemma:estimator_convergence_both_errors}.
Fix $x,y\in M$ and denote $\widehat D_g = \widehat D_{X,g}(x,y)$, $\widehat D_f = \widehat D_{X,f}(x,y)$ and $D = D_{M,f}(x,y)$ for short. Denote $G$ the graph considered for the paths in both $\widehat D_g$ and $\widehat D_f$ and $q$ the resolution.
First, notice that the assumption that $\|g-f\|_\infty \le \frac12\fmin$ implies the lower bound $g_{\min} = \frac12\fmin$ on $g$.
For any edge $(u,v)$ in $G$, \cref{lemma:weight_lipschitz_f} yields
\begin{align}
    \label{eq:proof:estimator_convergence_both_errors_1}
    \bv w_{g,q}(u,v) - w_{f,q}(u,v) \bv
    \le \|u-v\|\|g-f\|_\infty~.
\end{align}
Then, since $w_{g,q}(u,v) \ge \|u-v\| g_{\min}$, summing \cref{eq:proof:estimator_convergence_both_errors_1} over a path that realizes $\widehat D_g$ yields
\begin{align}
    \label{eq:proof:estimator_convergence_both_errors_2}
    \widehat D_f
    \le \Lp 1 + \frac{\|g-f\|_\infty}{g_{\min}} \Rp \widehat D_g
    \le 2 \widehat D_g~.
\end{align}
On the other hand, since $w_{f,q}(u,v) \ge \|u-v\| \fmin$, summing \cref{eq:proof:estimator_convergence_both_errors_1} over a path that realizes $\widehat D_f$ yields
\begin{align*}
    \widehat D_g
    \le \Lp 1 + \frac{\|g-f\|_\infty}{\fmin} \Rp \widehat D_f~,
\end{align*}
and it follows that
\begin{align}
    \label{eq:proof:estimator_convergence_both_errors_3}
    \Lv \widehat D_g - \widehat D_f \Rv
    \le \max\Lp \frac{\|g-f\|_\infty}{g_{\min}} \widehat D_g , \frac{\|g-f\|_\infty}{\fmin} \widehat D_f \Rp
    = 2\frac{\|g-f\|_\infty}{\fmin} \widehat D_g~.
\end{align}
Then, \cref{eq:proof:estimator_convergence_both_errors_2,eq:proof:estimator_convergence_both_errors_3} imply that 
\begin{align*}
    \Lv \widehat D_g - D \Rv
    &\le \Lv \widehat D_g - \widehat D_f \Rv + \Lv \widehat D_f - D \Rv\\
    &\le 2\frac{\|g-f\|_\infty}{\fmin} \widehat D_g + \ell_{\infty,M}\bLp\widehat D_{X,f}, D_{M,f}\bRp \bLp\widehat D_f \vee D\bRp\\
    &\le 2\frac{\|g-f\|_\infty}{\fmin} \widehat D_g + 2\ell_{\infty,M}\bLp\widehat D_{X,f}, D_{M,f}\bRp \bLp\widehat D_g \vee D\bRp~.
\end{align*}
Taking the supremum over $x$ and $y$ finally yields
\begin{align*}
    \ell_{\infty,M}\Lp\widehat D_{X,g}, D_{M,f}\Rp
    \le 2\Lp \frac{\|g-f\|_\infty}{\fmin} + \ell_{\infty,M}\bLp\widehat D_{X,f},D_{M,f}\bRp \Rp
\end{align*}
which concludes the proof of \cref{lemma:estimator_convergence_both_errors}.

\section{Probabilistic Technical Proofs}

\subsection{Convergence on the Ball Graph}
\label{app:proof::convergence_ball_graph}

In this section we prove \cref{thm:convergence_ball_graph}. In the following we say that something is true for $n$ large enough when it is true for all $n$ larger than some constant depending on $L_\mu$, $\cT_{M,f}$ and~$d$.
Assume that $0 < r \le \cT_{M,f}$ and $q\in\{2,\dots,\infty\}$. Then
\begin{equation*}
    \EE\Lb \ell_{\infty,M}\bLp\widehat D_{X_n,f}, D_{M,f}\bRp \Rb
    \le \PP\bLp 4\hausdorff(M,X_n) > r \bRp + \EE\Lb \1_{(4\hausdorff(M,X_n) \le r)} l_{\infty,M}\bLp\widehat D_{X_n,f}| D_{M,f}\bRp \Rb
\end{equation*}
since $\ell_{\infty,M}$ is upper bounded both by $1$ and by $l_{\infty,M}$ according to \cref{eq:loss_comparison}. According to \cref{lemma:hausdorff_proba_bound},
\begin{align}
    \label{eq:proof:convergence_ball_graph_1}
    \PP\bLp 4\hausdorff(M,X_n) > r \bRp
    \le \frac{16^d}{c_\mu r^d} e^{-n c_\mu \Lp\frac{r}{8}\Rp^d}
    = \Lp\frac{16L_\mu}{r}\Rp^d e^{-n \Lp\frac{r}{8L_\mu}\Rp^d}~.
\end{align}
On the other hand, assuming that $n\ge8$, \cref{thm:geodesic_approx,corol:hausdorff_expectation} yield
\begin{align}
    \label{eq:proof:convergence_ball_graph_2}
    \EE\Lb \1_{(4\hausdorff(M,X_n) \le r)} \ell_{\infty,M}\bLp\widehat D_{X_n,f}, D_{M,f}\bRp \Rb
    \le \frac{r}{32(q-1)\cT_{M,f}} + \frac{r^2}{8\cT_{M,f}^2} + 448\frac{L_\mu^2}{r^2} \Lp\frac{\log(n)}{n}\Rp^{\frac2d}
\end{align}
so that
\begin{align}
    \label{eq:proof:convergence_ball_graph_3}
    \EE\Lb \ell_{\infty,M}\bLp\widehat D_{X_n,f}, D_{M,f}\bRp \Rb
    \le A_n(r) + B_n(r,q)
\end{align}
where $A_n(r)$ and $B_n(r,q)$ respectively denote the right-hand sides in \cref{eq:proof:convergence_ball_graph_1} and in \cref{eq:proof:convergence_ball_graph_2}.
We now consider the two cases for the choice of the parameters $r$ and $q$ specified in \cref{thm:convergence_ball_graph}.

\proofsubparagraph{Large resolution}
Let $r = 8\sqrt{L_\mu\cT_{M,f}} (\log(n)/n)^{1/2d}$, which is indeed smaller than $\cT_{M,f}$ for $n$ large enough. Let $q \ge 1 + 4\frac{\cT_{M,f}}{r}$. Then
\begin{align*}
    A_n(r)
    = 2^d\Lp\frac{L_\mu}{\cT_{M,f}}\Rp^{\frac{d}{2}} \sqrt{\frac{n}{\log(n)}} \exp\Lp-\Lp\frac{\cT_{M,f}}{L_\mu}\Rp^{\frac{d}{2}}\sqrt{n\log(n)}\Rp
    \le \frac12\frac{L_\mu}{\cT_{M,f}} \Lp\frac{\log(n)}{n}\Rp^{\frac{1}{d}}
\end{align*}
for $n$ large enough and
\begin{align*}
    B_n(r,q)
    \le B_n\Lp1 + 4\frac{\cT_{M,f}}{r} , r\Rp
    = \Lp\frac12+8+7\Rp \frac{L_\mu}{\cT_{M,f}} \Lp\frac{\log(n)}{n}\Rp^{\frac1d}~,
\end{align*}
so that according to \cref{eq:proof:convergence_ball_graph_3},
\begin{align*}
    \EE\Lb \ell_{\infty,M}\bLp\widehat D_{X_n,f}, D_{M,f}\bRp \Rb
    \le 16\frac{L_\mu}{\cT_{M,f}} \Lp\frac{\log(n)}{n}\Rp^{\frac1d}
\end{align*}
which concludes the proof of \cref{eq:thm:convergence_ball_graph_1}.

\proofsubparagraph{Small resolution}
Let $r = 8L_\mu^{2/3}\cT_{M,f}^{1/3} (\log(n)/n)^{2/3d}$, which is indeed smaller than $\cT_{M,f}$ for $n$ large enough. Let $q = 2$. Then
\begin{align*}
    A_n(r)
    &= 2^d\Lp\frac{L_\mu}{\cT_{M,f}}\Rp^{\frac{d}{3}} \Lp\frac{n}{\log(n)}\Rp^{\frac{2}{3}} \exp\Lp-\Lp\frac{\cT_{M,f}}{L_\mu}\Rp^{\frac{d}{3}}n^{\frac13}\log(n)^{\frac23}\Rp\\
    &\le \frac12\Lp\frac{L_\mu}{\cT_{M,f}}\Rp^{\frac23} \Lp\frac{\log(n)}{n}\Rp^{\frac{2}{3d}}
\end{align*}
for $n$ large enough and
\begin{align*}
    B_n(q,r)
    &= \Lp\frac14+7\Rp \Lp\frac{L_\mu}{\cT_{M,f}}\Rp^{\frac23} \Lp\frac{\log(n)}{n}\Rp^{\frac{2}{3d}} + 8 \Lp\frac{L_\mu}{\cT_{M,f}}\Rp^{\frac43} \Lp\frac{\log(n)}{n}\Rp^{\frac{4}{3d}}\\
    &\le \frac{15}{2} \Lp\frac{L_\mu}{\cT_{M,f}}\Rp^{\frac23} \Lp\frac{\log(n)}{n}\Rp^{\frac{2}{3d}}
\end{align*}
for $n$ large enough, so that according to \cref{eq:proof:convergence_ball_graph_3},
\begin{align*}
    \EE\Lb \ell_{\infty,M}\bLp\widehat D_{X_n,f}, D_{M,f}\bRp \Rb
    \le 8\Lp\frac{L_\mu}{\cT_{M,f}}\Rp^{\frac23} \Lp\frac{\log(n)}{n}\Rp^{\frac{2}{3d}}
\end{align*}
which concludes the proof of \cref{eq:thm:convergence_ball_graph_2}.

\subsection{Convergence on the Nearest-Neighbors Graph}
\label{app:proof:convergence_NN_graph}

In this section we prove \cref{thm:convergence_NN_graph} using \cref{thm:convergence_ball_graph,prop:graph_ball_to_NN}.
Consider $k=\big\lceil\sqrt{n\log(n)}~\big\rceil$, $q=\lceil n^{1/4}\rceil$ and let
\begin{align*}
    r_- = \Lp\frac{k}{2C_\mu(n-1)}\Rp^{\frac1d}
    \eqand
    r_+ = \Lp\frac{2k}{c_\mu(n-1)}\Rp^{\frac1d}~,
\end{align*}
so that according to \cref{prop:graph_ball_to_NN}, the event
\begin{align*}
    A_1
    \eqdef \La G_{r_-}(X_n) \subset \cG_k(X_n) \subset G_{r_+}(X_n) \Ra
\end{align*}
satisfies
\begin{align}
    \label{eq:proof:convergence_ball_to_NN_1}
    \PP\bLp\overline{A_1}\bRp
    \le 2n \exp\Lp-\frac18\sqrt{n\log(n)}\Rp~.
\end{align}
Denote $\widehat D_{r_-}$, $\widehat D_{r_+}$ and $\widehat D_k$ the three different instances of $\widehat D_{X_n,f}$ defined for the $r_-$-ball graph, the $r_+$-ball graph and the $k$-NN graph respectively, all with resolution $q$. Also denote ${D = D_{M,f}}$ for short.
Under the event $A_1$, for all $x,y \in X_n$ it holds that
\begin{align*}
    \widehat D_{r_+}(x,y)
    \le \widehat D_k(x,y)
    \le \widehat D_{r_-}(x,y)
\end{align*}
as the graphs are equipped with the same weight function $w_{f,q}$ and the infimum over paths increases when fewer paths are allowed due to the graph being smaller.
It follows that
\begin{align*}
    l_{\infty,X_n}\bLp\widehat D_k| D\bRp
    &= \sup_{x\ne y\in M} \Lv \frac{D(x,y) - \widehat D_k(x,y)}{D(x,y)} \Rv\\
    &\le \sup_{x\ne y\in M} \Lv \frac{D(x,y) - \widehat D_{r_-}(x,y)}{D(x,y)} \Rv \vee \Lv \frac{D(x,y) - \widehat D_{r_+}(x,y)}{D(x,y)} \Rv\\
    &= l_{\infty,X_n}\bLp\widehat D_{r_-}| D\bRp \vee l_{\infty,X_n}\bLp\widehat D_{r_+}| D\bRp~.
\end{align*}
Under the event
\begin{align*}
    A_2
    \eqdef \La \ell_{\infty,X}\bLp\widehat D_{r_-}, D\bRp \le \frac12 \Ra \cap \La \ell_{\infty,X_n}\bLp\widehat D_{r_+}, D\bRp \le \frac12 \Ra~,
\end{align*}
it then holds according to \cref{eq:loss_comparison} that
\begin{align*}
    \ell_{\infty,X_n}\bLp\widehat D_k, D\bRp
    &\le l_{\infty,X_n}\bLp\widehat D_k| D\bRp\\
    &\le l_{\infty,X_n}\bLp\widehat D_{r_-}| D\bRp \vee l_{\infty,X_n}\bLp\widehat D_{r_+}| D\bRp\\
    &\le 2\ell_{\infty,X_n}\bLp\widehat D_{r_-}, D\bRp + 2\ell_{\infty,X_n}\bLp\widehat D_{r_+}, D\bRp~.
\end{align*}
This implies that
\begin{align}
    \label{eq:proof:convergence_ball_to_NN_2}
    \EE\Lb\1_{A_1\cap A_2}~\ell_{\infty,X_n}\bLp\widehat D_k, D\bRp \Rb
    \le 2\EE\Lb \ell_{\infty,X_n}\bLp\widehat D_{r_-}, D\bRp \Rb + 2\EE\Lb \ell_{\infty,X_n}\bLp\widehat D_{r_+}, D\bRp \Rb~.
\end{align}
The event $A_1$ is already known to have high probability. Regarding $A_2$, Markov inequality yields
\begin{align}
    \label{eq:proof:convergence_ball_to_NN_3}
    \PP\bLp\overline{A_2}\bRp
    \le 2\EE\Lb \ell_{\infty,X_n}\bLp\widehat D_{r_-}, D\bRp \Rb + 2\EE\Lb \ell_{\infty,X_n}\bLp\widehat D_{r_+}, D\bRp \Rb~.
\end{align}
Since $r_- \asymp r_+ \asymp (\log(n)/n)^{1/2d}$ when $n$ is large and $q = \lceil n^{1/4}\rceil \gtrsim 1/r_+ \ge 1/r_-$, there exists according to \cref{thm:convergence_ball_graph} a constant $c$ depending on $L_\mu$ and $\cT_{M,f}$ such that
\begin{align}
    \label{eq:proof:convergence_ball_to_NN_4}
    \EE\Lb \ell_{\infty,X_n}\bLp\widehat D_{r_-}, D\bRp \Rb + \EE\Lb \ell_{\infty,X_n}\bLp\widehat D_{r_+}, D\bRp \Rb
    \le c \Lp\frac{\log(n)}{n}\Rp^{\frac{1}{d}}~.
\end{align}
Finally, as $\ell_{\infty,X_n} \le 1$, combining \cref{eq:proof:convergence_ball_to_NN_1,eq:proof:convergence_ball_to_NN_2,eq:proof:convergence_ball_to_NN_3,eq:proof:convergence_ball_to_NN_4} yields
\begin{align*}
    \EE\Lb \ell_{\infty,X_n}\bLp\widehat D_k, D\bRp \Rb
    &\le \EE\Lb\1_{A_1\cap A_2}~\ell_{\infty,X_n}\bLp\widehat D_k, D\bRp \Rb + \PP\bLp\overline{A_2}\bRp + \PP\bLp\overline{A_1}\bRp\\
    &\le 4\EE\Lb \ell_{\infty,X_n}\bLp\widehat D_{r_-}, D\bRp \Rb + 4\EE\Lb \ell_{\infty,X_n}\bLp\widehat D_{r_+}, D\bRp \Rb + 2n e^{-\frac18\sqrt{n\log(n)}}\\
    &\le C \Lp\frac{\log(n)}{n}\Rp^{\frac{1}{d}}
\end{align*}
where $C$ is a constant depending on $L_\mu$ and $\cT_{M,f}$. This concludes the proof of \cref{thm:convergence_NN_graph}.

\subsection{Minimax Lower Bound}
\label{app:minimax}

In this section we prove \cref{thm:minimax_lower_bound} using Le Cam's method \cite{yuAssouadFanoCam1997}.
We first adapt this standard result to our context.
\begin{lemma}
\label{lemma:lecam}
Let $n\ge2$ and $\cM$ be some set of distributions over $\RR^\damb$ such that there exists two measures $\mu_1,\mu_2\in\cM$ with respective supports $M_1$ and $M_2$ and at total variation distance $\dtv(\mu_1,\mu_2) \le \frac1n$ from each other.
Then, for all $x,y \in M_1 \cap M_2$ the minimax risk associated with the estimation of the induced metric may be lower bounded by the distortion between the two metrics $D_{M_1}$ and $D_{M_2}$ at endpoints $x$ and $y$.
\begin{align}
    \label{eq:lemma:lecam}
    \inf_{\hat D \in \mathfrak D_n} \sup_{\mu\in\cM} \EE_{X \sim \mu^{\otimes n}}\Lb \ell_{\infty,M_\mu}\bLp\widehat D_X, D_{M_\mu}\bRp \Rb
    \ge \frac18 \metricratio{D_{M_1}(x,y)}{D_{M_2}(x,y)}
\end{align}
where we recall $\mathfrak D_n$ is the set of possible estimators defined in \cref{thm:minimax_lower_bound} and $M_\mu$ denotes the support of $\mu$.
\end{lemma}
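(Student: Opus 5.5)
This is Le Cam's two-point argument, specialised to the loss $\ell_{\infty,M_\mu}$. Fix an estimator $\widehat D \in \mathfrak D_n$ and endpoints $x,y\in M_1\cap M_2$, and write $D_i = D_{M_i}(x,y)$ and $\Delta = |D_1-D_2|/(D_1\vee D_2)$.

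\emph{Step 1: reduce to the fixed pair.} Since $x,y$ lie in both supports, for $i\in\{1,2\}$ we have
\[
\ell_{\infty,M_i}\bigl(\widehat D_X, D_{M_i}\bigr) \ge L_i(X) \eqdef \metricratio{\widehat D_X(x,y)}{D_i},
\]
with the convention that $L_i(X)=1$ when $\widehat D_X(x,y)=\pinfty$.

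\emph{Step 2: a pseudo-triangle inequality.} The function $(a,b)\mapsto |a-b|/(a\vee b)$ is not a metric on $\RR_+$, but I will show that $L_1(X)+L_2(X)\ge \Delta/2$ for every value $a=\widehat D_X(x,y)\ge0$. Assume without loss of generality that $D_1\le D_2$.
\begin{itemize}
\item If $a\ge \sqrt{D_1D_2}$, then $L_1 = 1-D_1/a \ge 1-\sqrt{D_1/D_2}$.
\item Otherwise $a<\sqrt{D_1D_2}\le D_2$, so $L_2 = 1-a/D_2 \ge 1-\sqrt{D_1/D_2}$.
\end{itemize}
In both cases the lower bound is $1-\sqrt{s}$ with $s=D_1/D_2=1-\Delta$. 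Since $1-\sqrt{s} = (1-s)/(1+\sqrt s) \ge (1-s)/2 = \Delta/2$, we get $L_1\vee L_2\ge \Delta/2$, hence $L_1+L_2\ge\Delta/2$. This is the only non-routine step: it is where the non-metric loss must be handled, and the geometric-mean threshold is the choice I would try first. It also explains the constant, since $\Delta/2$ times the factor from Step 3 gives $\Delta/8$.

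\emph{Step 3: the two-point bound.} Bound the supremum over $\cM$ from below by the average over $\mu_1,\mu_2$:
\[
\sup_{\mu\in\cM}\EE\bigl[\ell\bigr] \ge \frac12\Bigl(\EE_{\mu_1^{\otimes n}}[L_1]+\EE_{\mu_2^{\otimes n}}[L_2]\Bigr).
\]
Both $L_i$ take values in $[0,1]$, so
\[
\EE_{\mu_1^{\otimes n}}[L_1]+\EE_{\mu_2^{\otimes n}}[L_2] \ge \int (L_1+L_2)\,\d\bigl(\mu_1^{\otimes n}\wedge\mu_2^{\otimes n}\bigr) \ge \frac{\Delta}{2}\Bigl(1-\dtv\bigl(\mu_1^{\otimes n},\mu_2^{\otimes n}\bigr)\Bigr).
\]
By subadditivity of total variation over products, $\dtv(\mu_1^{\otimes n},\mu_2^{\otimes n})\le n\,\dtv(\mu_1,\mu_2)\le 1$, which gives nothing. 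To fix this I would instead use Le Cam's bound through the affinity: $1-\dtv(P^{\otimes n},Q^{\otimes n}) \ge \frac12\bigl(1-\dtv(P,Q)\bigr)^{2n}$, which follows from the Hellinger affinity bound $\int\sqrt{pq}\ge 1-\dtv$ together with tensorisation. With $\dtv\le 1/n$ and $n\ge2$ this gives $(1-1/n)^{2n}\ge 1/16$. The resulting constant is $\Delta/64$ rather than $\Delta/8$, so to reach $1/8$ I would rework the Hellinger step more carefully or use the coupling bound directly; the precise constant only affects $C$ in \cref{thm:minimax_lower_bound}.

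Taking the infimum over $\widehat D$ then concludes \cref{eq:lemma:lecam}, up to the constant.
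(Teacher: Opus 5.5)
Your scheme is the paper's two-point Le Cam argument: reduce to the pair $(x,y)$, bound the supremum by the average over $\mu_1,\mu_2$, and restrict both expectations to the overlap $\mu_1^{\otimes n}\wedge\mu_2^{\otimes n}$. As written, however, it does not prove the lemma, because the constant $\frac18$ is part of the statement. Your chain gives $\frac12\cdot\frac{\Delta}{2}\cdot\frac12\cdot\frac1{16}=\Delta/128$, not $\Delta/64$. Even with the best overlap bound, your Step~2 would cap you at $\Delta/16$. Two ideas are missing, and you need both.

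\emph{The loss is a metric.} Step~2 loses a factor $2$ because $(a,b)\mapsto|a-b|/(a\vee b)$ \emph{is} a metric on $(0,+\infty)$. It equals $1-e^{-|\log a-\log b|}$, and $t\mapsto 1-e^{-t}$ is concave, nondecreasing and vanishes at $0$, hence subadditive. Concretely, take $D_1\le D_2$:
\begin{itemize}
\item if $a\le D_1$, then $L_2\ge 1-D_1/D_2=\Delta$;
\item if $a\ge D_2$, then $L_1\ge\Delta$;
\item if $D_1\le a\le D_2$, then $L_1+L_2-\Delta=(1-D_1/a)(1-a/D_2)\ge 0$;
\item if $a\in\{0,+\infty\}$, then $L_1=L_2=1$.
\end{itemize}
Hence $L_1+L_2\ge\Delta$. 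This is exactly the identity $\min_{\delta\ge0}\bigl(|\delta-a|/(\delta\vee a)+|\delta-b|/(\delta\vee b)\bigr)=|a-b|/(a\vee b)$ used in the paper.

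\emph{No Hellinger step is needed.} The Hellinger route costs you a square and a factor $\frac12$. Take densities $p,q$ of $\mu_1,\mu_2$ with respect to a common dominating measure. The pointwise inequality $\min\bigl(\prod_i p(z_i),\prod_i q(z_i)\bigr)\ge\prod_i\min\bigl(p(z_i),q(z_i)\bigr)$ integrates to $1-\dtv(\mu_1^{\otimes n},\mu_2^{\otimes n})\ge\bigl(1-\dtv(\mu_1,\mu_2)\bigr)^n\ge(1-1/n)^n\ge\frac14$ for $n\ge2$. This is the coupling bound you allude to. The paper writes this step as an equality. That equality holds for the nested uniform measures of the application, but in general only the inequality holds, and it is the direction needed. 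Combining both fixes gives $\frac12\cdot\Delta\cdot\frac14=\Delta/8$. As you note, a weaker constant would be harmless for Theorem~\ref{thm:minimax_lower_bound}, but it does not establish the lemma as stated.
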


\begin{proof}
Denote $\cR_n$ the left-hand side of \cref{eq:lemma:lecam} to be lower bounded.
The supremum over all measures in $\cM$ is lower bounded by the supremum over the two measures $\mu_1$ and $\mu_2$ and the loss $\ell_{\infty,M_\mu}$ by the distortion between endpoints $x$ and $y$. Finally, the supremum is lower bounded by the average. These inequalities yield
\begin{align*}
    \cR_n
    &\ge \inf_{\widehat D} \sup_{\mu \in \{\mu_1,\mu_2\}} \EE_{\mu^{\otimes n}}\Lb\ell_{\infty,M_\mu}\bLp\widehat D_X, D_M\bRp\Rb\\
    &\ge \inf_{\widehat D} \sup_{\mu \in \{\mu_1,\mu_2\}} \EE_{\mu^{\otimes n}}\Lb\metricratio{\widehat D_X(x,y)}{D_{M_\mu}(x,y)}\Rb\\
    &\ge \frac12 \inf_{\widehat D}\La
        \EE_{\mu_1^{\otimes n}}\Lb\metricratio{\widehat D_X(x,y)}{D_{M_1}(x,y)}\Rb
        + \EE_{\mu_2^{\otimes n}}\Lb\metricratio{\widehat D_X(x,y)}{D_{M_2}(x,y)}\Rb
    \Ra\\
    &\ge \frac12 \inf_{\widehat D}
        \EE_{\mu_1^{\otimes n}}\Lb\Lp \metricratio{\widehat D_X(x,y)}{D_{M_1}(x,y)}
        + \metricratio{\widehat D_X(x,y)}{D_{M_2}(x,y)} \Rp\Lp1\wedge\frac{d\mu_2^{\otimes n}}{d\mu_1^{\otimes n}}\Rp\Rb
\end{align*}
and it can be shown using elementary computations that for all $a,b>0$,
\begin{align*}
    \min_{\delta\ge0} \Lp \metricratio{\delta}{a} + \metricratio{\delta}{b} \Rp
    = \metricratio{a}{b}~,
\end{align*}
hence
\begin{align*}
    \cR_n
    \ge \frac12 \EE_{\mu_1^{\otimes n}}\Lb \metricratio{D_{M_1}(x,y)}{D_{M_2}(x,y)}
        \Lp1\wedge\frac{d\mu_2^{\otimes n}}{d\mu_1^{\otimes n}}\Rp \Rb~.
\end{align*}
Moreover, using the properties of the total variation distance along with the assumption that $\dtv(\mu_1,\mu_2) \le \frac1n$ and $n\ge2$,
\begin{align*}
    \EE_{\mu_1^{\otimes n}}\Lb1\wedge\frac{d\mu_2^{\otimes n}}{d\mu_1^{\otimes n}}\Rb
    = 1 - \dtv(\mu_1^{\otimes n}, \mu_2^{\otimes n})
    = \bLp1 - \dtv(\mu_1, \mu_2)\bRp^n
    \ge \Lp1-\frac1n\Rp^n
    \ge \frac14~,
\end{align*}
hence
\begin{align*}
    \cR_n
    \ge \frac18 \metricratio{D_{M_1}(x,y)}{D_{M_2}(x,y)}
\end{align*}
which concludes the proof.
\end{proof}

Let us now prove \cref{thm:minimax_lower_bound} using \cref{lemma:lecam} with the measures $\mu_1$ and $\mu_2$ described in \cref{sec:minimax_lower_bound}.
The total variation distance between $\mu_1,\mu_2 \in \cM(d,L,\tau)$ is the fraction of volume of $M_1$ that was carved out to create $M_2$. The carved volume is upper bounded by $\|x-y\| \rho^{d-1}$ where $\rho$ is the distance of the middle of the black arcs in \cref{fig:carved_cube} to the missing edge, that is $\rho = (\tau-\delta) / \sqrt{d-1}$ where $\delta = \sqrt{\tau^2 - \epsilon^2}$ is the distance from the edge to the center of the ball used to carve $M_2$.
Then,
\begin{align*}
    \dtv(\mu_1,\mu_2)
    &\le \frac{\|x-y\| \rho^{d-1}}{(2\alpha L)^d}\\
    &= \frac{2\epsilon}{(\alpha L)^d (d-1)^{\frac{d-1}{2}}} \tau^{d-1} \Lp1-\sqrt{1 - \frac{\epsilon^2}{\tau^2}}\Rp^{d-1}\\
    &\le \frac{2}{\alpha^d (d-1)^{\frac{d-1}{2}}} \frac{\epsilon \tau^{d-1}}{L^d} \Lp\frac{\epsilon^2}{\tau^2}\Rp^{d-1}\\
    &= \beta \frac{\epsilon^{2d-1}}{L^d \tau^{d-1}}
\end{align*}
where $\beta = 2 \alpha^{-d} (d-1)^{-(d-1)/2}$ is a constant depending only on $d$.
Moreover, since a geodesic between $x$ and $y$ is a straight line in $M_1$ and an arc of radius $\tau$ in $M_2$, the induced distortion is
\begin{align*}
    \frac{D_{M_2}(x,y) - D_{M_1}(x,y)}{D_{M_2}(x,y) \vee D_{M_1}(x,y)}
    = \frac{\arcsin\Lp\frac{\|x-y\|}{2\tau}\Rp - \frac{\|x-y\|}{2\tau}}{\arcsin\Lp\frac{\|x-y\|}{2\tau}\Rp}
    \ge \frac16 \Lp\frac{\|x-y\|}{2\tau}\Rp^2
    = \frac{\epsilon^2}{6\tau^2}
\end{align*}
using \cref{eq:trigo_lower} to get the lower bound.
Then, by letting
\begin{align*}
    \epsilon
    = \Lp \frac{L^d \tau^{d-1}}{\beta~n} \Rp^{\frac{1}{2d-1}}~,
\end{align*}
which is indeed smaller than both $\alpha L$ and $\tau$ for $n$ large enough depending on $d$, $L$ and $\tau$, one has $\dtv(\mu_1,\mu_2) \le \frac1n$ and
\begin{align*}
    \frac{D_{M_2}(x,y) - D_{M_1}(x,y)}{D_{M_2}(x,y) \vee D_{M_1}(x,y)}
    \ge \frac16 \Lp \frac{L^d}{\beta~\tau^d~n} \Rp^{\frac{2}{2d-1}}~.
\end{align*}
\Cref{thm:minimax_lower_bound} then follows from \cref{lemma:lecam}.

\end{document}